\newtheorem{theorem}{Theorem}
\newtheorem{lemma}{Lemma}[section]
\newtheorem{proposition}[lemma]{Proposition}
\newtheorem{corollary}[lemma]{Corollary}
\newtheorem{remark}[lemma]{Remark}
\newtheorem{definition}[lemma]{Definition}
\numberwithin{equation}{section}
\newcommand{\R}{\mathbb{R}}
\newcommand{\C}{\mathbb{C}}
\newcommand{\N}{\mathbb{N}}
\newcommand{\Z}{\mathbb{Z}}
\newcommand{\T}{\mathbb{T}}
\begin{document}
\title
[Global solutions of super-critical regularity]
{Probabilistic well-posedness for the cubic wave equation}
\author{Nicolas Burq and Nikolay Tzvetkov}
\address{Laboratoire de Math\'ematiques, University Paris-Sud 11, F-91405, UMR 8628 du CNRS et  Universit\'e de 
Cergy-Pontoise,  Cergy-Pontoise, F-95000,UMR du CNRS 8088, et Institut Universitaire de france. }
\thanks{The first author is supported by ANR project EQUADISP, while the second author is supported by ERC project Dispeq}
\begin{abstract}
The purpose of this article is to introduce for dispersive partial differential equations with random initial data, the notion of well-posedness (in the Hadamard-probabilistic sense). We restrict the study to one of the simplest examples of such equations:  the periodic cubic semi-linear wave equation. Our contributions in this work are twofold: first we break the algebraic rigidity involved in previous works and allow much more general randomizations (general infinite product measures v.s. Gibbs measures), and second, we show that the flow that we are able to construct enjoys very nice dynamical properties, including a new notion of probabilistic continuity.
\end{abstract}
\maketitle
%%%%%%%%%%%%%%%%%%%%%%%%%%%%%%%%%%%%%%%%%%%
%%%%%%%%%%%%%%%%%%%%%%%%%%%%%%%%%%%%%%%%%%%
\section{Introduction}
In \cite{BT1} , we developed a general method for obtaining local existence and uniqueness of semi-linear wave equations with data of super-critical regularity.
In addition, in \cite{BT2} we gave a very particular example (based on invariant measures considerations) 
of global existence with data of supercritical regularity. 
Our goal here is to make a significant extension 
of \cite{BT1,BT2} by presenting a quite general scheme to get global well-posedness for semi-linear dispersive equations
with data of super-critical regularity.  
We also propose a natural notion of Hadamard well-posedness in this setting.
We decided to restrict our attention to a very simple example. A further development of the ideas we introduce here
will appear in a forthcoming work.

Let $(M,g)$ be a $3d$ boundaryless Riemannian manifold with associated Laplace-Beltrami operator $\Delta_{g}$. Consider the cubic defocusing wave equation
\begin{equation}\label{NLW}
\begin{gathered}
(\partial_t^2-\Delta_{g})u+u^3=0,\quad u:\R \times M\rightarrow \R,
\\ 
u|_{t=0}=u_0,\,\, \partial_t u|_{t=0}=u_1,\quad\quad (u_0,u_1)\in H^s(M)\times H^{s-1}(M)\equiv {\mathcal H}^s(M),
 \end{gathered}
\end{equation}
where $H^s(M)$ denotes the classical Sobolev spaces on $M$.
By using simple scaling considerations one obtains that $s=1/2$ is the critical Sobolev regularity associated to \eqref{NLW}.
It turns out that this regularity is the border line of the deterministic theory, in the sense of local well-posedness in the Hadamard sense (existence,
uniqueness and continuous dependence on the data).
More precisely, we have the following statement.
\begin{theorem}\label{th1}
The Cauchy problem \eqref{NLW} is locally well-posed for data in ${\mathcal H}^s$, $s\geq 1/2$ (and even globally for $s\geq 3/4$).
In the opposite direction, for $s\in(0,1/2)$, the Cauchy problem \eqref{NLW} is not locally well-posed in ${\mathcal H}^s$. For instance
one can contradict the continuous dependence by showing that there exists a sequence $(u_n)$ of global smooth solutions of \eqref{NLW} such that
$$
\lim_{n\rightarrow\infty}\|(u_n(0),\partial_t u_n(0))\|_{{\mathcal H}^s}=0
$$
and 
$$
\lim_{n\rightarrow\infty}\|(u_n(t),\partial_t u_n(t))\|_{L^\infty([0,T];{\mathcal H}^s)}=\infty,\quad \forall\, T>0.
$$
\end{theorem}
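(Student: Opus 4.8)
The plan is to treat the two directions completely separately: the positive part is the standard scaling-critical Strichartz theory, so I would dispatch it quickly and concentrate the real effort on the instability construction.

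For the well-posedness statement, since $s=1/2$ is the scaling-critical regularity I would run a contraction-mapping argument in a wave-Strichartz space. The key estimate is the admissible $L^4_tL^4_x$ bound $\|u\|_{L^4_{t,x}}\lesssim\|(u_0,u_1)\|_{\mathcal{H}^{1/2}}$ for the free flow (on a compact $M$ one allows at most an $\varepsilon$-loss of derivatives, harmless at $s=1/2$). Writing Duhamel's formula
\[
u=\cos(t\sqrt{-\Delta_g})u_0+\frac{\sin(t\sqrt{-\Delta_g})}{\sqrt{-\Delta_g}}u_1-\int_0^t\frac{\sin((t-\tau)\sqrt{-\Delta_g})}{\sqrt{-\Delta_g}}\,u^3\,d\tau,
\]
one closes the fixed point using $\|u^3\|_{L^{4/3}_{t,x}}=\|u\|_{L^4_{t,x}}^3$ together with the dual/inhomogeneous Strichartz bound; uniqueness and continuous dependence are built into the same contraction. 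For the global statement at $s\geq 3/4$ I would propagate the conserved energy $E(u)=\tfrac12\int(|\partial_t u|^2+|\nabla u|^2)+\tfrac14\int u^4$ below the energy space by the I-method (or Bourgain's high--low decomposition), whose threshold for the defocusing cubic wave in $3d$ is exactly $s=3/4$.

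The instability statement is the substantive part, and I would prove it by a small-dispersion/ODE-concentration argument in the spirit of Christ--Colliander--Tao and Lebeau. Working in a coordinate chart (finite speed of propagation makes the construction insensitive to the geometry of $M$ on the short times and small spatial scales involved), fix $\phi\in C_c^\infty(\R^3)$, let $\delta_n\to 0$, and set $u_0^{(n)}(x)=\kappa_n\phi(x/\delta_n)$, $u_1^{(n)}=0$, with amplitude $\kappa_n$ to be chosen. I would compare the true solution to the solution $V_n$ of the dispersionless ODE $\partial_t^2 V+V^3=0$ with the same data, which is explicit: $V_n(t,x)=A_n(x)\,\Psi(A_n(x)t)$ with $A_n(x)=\kappa_n\phi(x/\delta_n)$ and $\Psi$ the periodic (elliptic-function) solution of $\Psi''+\Psi^3=0$, $\Psi(0)=1$, $\Psi'(0)=0$. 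The mechanism is that $\partial_t V_n(t,x)=A_n(x)^2\,\Psi'(A_n(x)t)$ oscillates in $x$ through the $x$-dependence of the phase, generating frequencies of order $\delta_n^{-1}$ with amplitude $\kappa_n^2$ after a fixed fraction $t_n\sim\kappa_n^{-1}$ of the local period. A scaling count gives $\|u_0^{(n)}\|_{\mathcal{H}^s}\sim\kappa_n\delta_n^{3/2-s}$ and $\|\partial_t V_n(t_n)\|_{H^{s-1}}\sim\kappa_n^2\delta_n^{5/2-s}$, so the ratio is $\sim\kappa_n\delta_n$. Taking $\kappa_n=\delta_n^{-\beta}$ with $\beta$ in the window $\tfrac54-\tfrac{s}{2}<\beta<\tfrac32-s$, which is nonempty precisely because $s<1/2$, forces the data to tend to $0$ in $\mathcal{H}^s$ while $\|\partial_t V_n(t_n)\|_{H^{s-1}}\to\infty$ at a time $t_n\sim\delta_n^{\beta}\to 0$; every such $\beta$ exceeds $1$, so $\kappa_n\delta_n\to\infty$, which is exactly the condition that the cubic term $\sim\kappa_n^3$ dominates the dispersive term $\sim\kappa_n\delta_n^{-2}$.

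The only genuine work, and the main obstacle, is to upgrade this heuristic to a rigorous statement, i.e.\ to show that $w_n:=u^{(n)}-V_n$ stays negligible next to $V_n$ in $\mathcal{H}^s$ on $[0,t_n]$. I would pass to the rescaled variables $\tau=\kappa_n t$, $y=x/\delta_n$, $u=\kappa_n U$, in which the equation becomes $\partial_\tau^2 U-\varepsilon_n^2\Delta_y U+U^3=0$ with $\varepsilon_n=(\kappa_n\delta_n)^{-1}\to 0$ and fixed data $U(0)=\phi$, $\partial_\tau U(0)=0$, so that $\Delta_y$ appears as an $O(\varepsilon_n^2)$ perturbation of the ODE. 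The difference $W=U-V$ then solves a wave equation forced by $\varepsilon_n^2\Delta V$ with a cubic nonlinearity linearized around $V$; since the relevant times correspond to $\tau$ of order one (a bounded number of periods), a standard energy estimate followed by Gronwall on the bounded $\tau$-interval yields $\|W\|=O(\varepsilon_n^2)$, hence $\|w_n(t_n)\|_{\mathcal{H}^s}=o(\|V_n(t_n)\|_{\mathcal{H}^s})$. Finally, because the data are smooth and the defocusing cubic wave equation is globally well-posed for smooth data, each $u^{(n)}$ is a global smooth solution, which completes the contradiction with continuous dependence.
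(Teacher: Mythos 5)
First, some context: the paper does not actually prove Theorem~\ref{th1} --- it is quoted from the literature, with the local theory attributed to Ginibre--Velo \cite{GiVe}, Lindblad--Sogge \cite{LS} and Kapitanskii's Strichartz estimates \cite{K}, the global statement for $s\geq 3/4$ to \cite{KPV,GaPl,BaCH}, and the ill-posedness to \cite{BT1}, which follows Christ--Colliander--Tao \cite{CCT} and Lebeau \cite{L}. Your positive part coincides with that route, up to one inaccurate parenthetical: at the critical index $s=1/2$ an $\varepsilon$-loss in Strichartz would \emph{not} be harmless (it would lose exactly the endpoint); fortunately the local-in-time wave Strichartz estimates of \cite{K} on a compact manifold are lossless, so this is a misstatement rather than a gap. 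Your ill-posedness strategy (small dispersion, comparison with the ODE $\partial_t^2 V+V^3=0$) is also the right one in spirit.

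The genuine gap is in your last step, ``a standard energy estimate followed by Gronwall yields $\|W\|=O(\varepsilon_n^2)$, \emph{hence} $\|w_n(t_n)\|_{\mathcal{H}^s}=o(\|V_n(t_n)\|_{\mathcal{H}^s})$''. The ``hence'' fails. In your scheme all the growth sits in the second component, measured in $H^{s-1}$ with $s-1<0$, and the size $\kappa_n^2\delta_n^{5/2-s}$ of the main term uses crucially that $\partial_t V_n(t_n)$ oscillates at frequency $\delta_n^{-1}$: its $L^2$ norm is $\kappa_n^2\delta_n^{3/2}$, and the negative-index norm gains the factor $\delta_n^{1-s}$ only because of that frequency localization. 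The energy/Gronwall estimate, by contrast, controls the error only in $L^2$: $\|\partial_t w_n(t_n)\|_{L^2}\lesssim \varepsilon_n^2\kappa_n^2\delta_n^{3/2}$. For a negative index the inequality $\|f\|_{H^{s-1}}\leq \|f\|_{L^2}$ cannot be improved without ruling out low-frequency content of $f$, and the energy estimate gives no handle on that. So the best available comparison is the requirement $\varepsilon_n^2\kappa_n^2\delta_n^{3/2}\ll \kappa_n^2\delta_n^{5/2-s}$, i.e. $\varepsilon_n^2\ll\delta_n^{1-s}$. With $\kappa_n=\delta_n^{-\beta}$, so that $\varepsilon_n=\delta_n^{\beta-1}$, this reads $\beta>\tfrac{3-s}{2}=\tfrac32-\tfrac{s}{2}$, whereas your window demands $\beta<\tfrac32-s$. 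Since $\tfrac32-\tfrac{s}{2}>\tfrac32-s$ for every $s>0$, the two conditions are incompatible for \emph{all} $s\in(0,1/2)$: with your parameters the conversion from the $L^2$ error bound to the $H^{s-1}$ comparison never closes, even in the most favorable part of the window.

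The gap is fixable, but it needs an additional idea, namely low-frequency control of the error. For instance, write $\partial_t w_n(t_n)=\int_0^{t_n}\partial_t^2 w_n\,dt$ and estimate it in $H^{-1}$ using $\partial_t^2 w_n=\Delta w_n+\Delta V_n-(u_n^3-V_n^3)$: the dispersive forcing costs only $\|\Delta V_n\|_{H^{-1}}\leq\|\nabla V_n\|_{L^2}\sim\kappa_n\delta_n^{1/2}$, so over the short window $t_n\sim\kappa_n^{-1}$ it contributes $O(\delta_n^{1/2})=O(\varepsilon_n^2\kappa_n^2\delta_n^{5/2})$, i.e. $\varepsilon_n^2$ times the $H^{-1}$ size of the main term (the cubic difference is handled similarly, using its $O(\delta_n)$ support and H\"older). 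Interpolating $\|\cdot\|_{H^{s-1}}\leq\|\cdot\|_{H^{-1}}^{1-s}\|\cdot\|_{L^2}^{s}$ then yields a relative error $O(\varepsilon_n^2)$ in $H^{s-1}$, and your one-period scheme closes. The alternative, which is what the proof the paper relies on (\cite{BT1}, following \cite{CCT}) actually does, is to run the decoherence over many periods $\tau_n\to\infty$ (paying logarithmic constraints in the Gronwall step) and measure the growth of the solution itself in $H^s$ with $s>0$, where interpolation between the $L^2$ and $\dot H^1$ error bounds suffices and no negative-index norm ever enters.
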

Moreover, one can also contradict the existence by showing that there exists an initial datum $(u_0,u_1)\in {\mathcal H}^s$ 
such that for every $T>0$ the problem \eqref{NLW} has no solution in $L^\infty([0,T];{\mathcal H}^s)$, 
if we suppose that in addition the flow satisfies a suitable finite speed of propagation.
Such a property is natural in the context of wave equations.

One may wish to compare the result of Theorem~\ref{th1} with the classical Hadamard counterexample in the context of the
Laplace equation
\begin{equation}\label{laplace}
(\partial_t^2+\partial_x^2)v=0,\quad v\,:\, \R_t\times S^1_{x}\longrightarrow \R.
\end{equation}
The equation \eqref{laplace} has the explicit solution
$$
v_{n}(t,x)=e^{-\sqrt{n}}{\rm sh}(nt)\cos(nx).
$$
Then for every $s$, $v_n$ satisfies
$$
\|(v_n(0),\partial_t v_n(0))\|_{{\mathcal H}^s(S^1)}\lesssim  e^{-\sqrt{n}}n^{s}\longrightarrow 0,
$$
as $n$ tends to $+\infty$ but for $t\neq 0$,
$$
\|(v_n(t),\partial_t v_n(t))\|_{{\mathcal H}^s(S^1)}\gtrsim  e^{n|t|}\,e^{-\sqrt{n}}n^{s}\longrightarrow +\infty,
$$
as $n$ tends to $+\infty$.
Consequently \eqref{laplace} in not well-posed in ${\mathcal H}^s$ for every $s\in\R$ which is the analogue of the ${\mathcal H}^s$, $s<1/2$ result of
Theorem~\ref{th1}.  On the other hand \eqref{laplace} is well-posed in analytic spaces which is the analogue of the $s>1/2$ result in Theorem~\ref{th1}.
Let us also observe that one may show the ill-posedness of \eqref{laplace} in Sobolev spaces by an indirect argument based on elliptic regularity.
We are not aware of a similar indirect argument in the context of the wave equation \eqref{NLW} for $s<1/2$.  

The well-posedness part of the Theorem~\ref{th1} can be proved as in the works by Ginibre-Velo~\cite{GiVe} and Lindblad-Sogge~\cite{LS}, 
by invoking the Strichartz estimates for the wave equation on a riemannian manifold
due to Kapitanskii \cite{K}. For $s>1/2$ the well-posedness holds in a stronger sense since the time existence can be chosen the same for all data in a fixed bounded set of ${\mathcal H}^s$ and moreover the flow map is uniformly continuous on bounded sets of ${\mathcal H}^s$.
In the case $s=1/2$ the situation is more delicate since the existence time depends in a more subtle way on
the data. 
The global well-posedness part of Theorem~\ref{th1} can be obtained (following ideas by Bourgain~\cite{Bo}) by adapting the proofs of Kenig-Ponce-Vega~\cite{KPV}, Gallagher-Planchon~\cite{GaPl} and Bahouri-Chemin~\cite{BaCH} to the compact setting.
We also refer to the works by Roy~\cite{Roy} for further investigations in the direction of deterministic global well-posedness for \eqref{NLW} with rough data.
The ill-posedness statement of Theorem~\ref{th1} is proved in our previous article~\cite{BT1}, by using the approaches of Christ-Colliander-Tao~\cite{CCT} and 
Lebeau~\cite{L}. 
\par
One may however ask whether some sort of well-posedness for \eqref{NLW} survives for $s<1/2$. In \cite{BT1} we have shown that the
answer is positive, at least locally in time, if one accepts to randomize the initial data. Moreover, the method of \cite{BT1} works for a quite general
class of randomizations. As already mentioned the approach of \cite{BT2} to get global in time results is restricted only to very particular randomizations. 
More precisely, it is based on a global control on the flow given by an invariant Gibbs measure (see also \cite{Bo}). In \cite{BT1}, Remark~1.5, we asked
whether the globalization argument can be performed by using other global controls on the flow such as conservations laws. In the present work we 
give a positive answer to this question. 
\par 
Let us now describe the initial data randomization we use. We suppose that $M=\T^3$ with the flat metric. Starting from 
$(u_0,u_1)\in {\mathcal H}^s$ given by their Fourier series 
$$ 
u_{j}(x)=a_{j}+\sum_{n\in\Z^3_{\star}}\Big(b_{n,j}\,\cos(n\cdot x)+c_{n,j}\sin(n\cdot x)\Big),\qquad \quad j=0,1,
\quad \Z^3_{\star}=\Z^3\backslash\{0\}
$$
we define $u_{j}^\omega$ by
\begin{equation}\label{coord}
u_{j}^\omega(x)=\alpha_{j}(\omega)a_{j}+\sum_{n\in\Z^3_{\star}}\Big(\beta_{n,j}(\omega)b_{n,j}\,\cos(n\cdot x)+\gamma_{n,j}(\omega)c_{n,j}\sin(n\cdot x)\Big),
\end{equation}
where $(\alpha_{j}(\omega),\beta_{n,j}(\omega),\gamma_{n,j}(\omega))$, $n\in\Z^3_{\star}$, $j=0,1$ 
is a sequence of real random variables on a probability space $(\Omega,p,{\mathcal F})$.
We assume that the random variables $(\alpha_j,\beta_{n,j},\gamma_{n,j})_{n\in\Z^3_{\star},j=0,1}$ 
are independent identically distributed real random variables with a joint distribution $\theta$ satisfying 
\begin{equation}\label{subgauss}
\exists\, c>0,\quad \forall\,\gamma\in\R,\quad
\Big|\int_{-\infty}^{\infty}e^{\gamma x}d\theta(x)\Big|
\leq e^{c\gamma^2}
\end{equation}
(under the assumption \eqref{subgauss} the random variables are necessarily of mean zero).
Typical examples of random variables satisfying \eqref{subgauss} are the standard gaussians, i.e. $d\theta(x)=(2\pi)^{-1/2}\exp(-x^2/2)dx$ 
(with an identity in \eqref{subgauss}) or any family of random variables having compactly supported distriution function $\theta$, e.g. the Bernoulli variables $d\theta(x)=\frac{1}{2}(\delta_{-1}+\delta_{1})$. An advantage of the Bernoulli randomization is that it keeps the ${\mathcal H}^s$ norm of the original function.
The gaussian randomization has the advantage to "generate" a dense set in ${\mathcal H}^s$ via the map 
\begin{equation}\label{eq.proba}
 \omega \in \Omega \longmapsto (u_0^\omega, u_1^\omega)\in{\mathcal H}^s
 \end{equation}
for many $(u_0,u_1)\in {\mathcal H}^s$.  Notice finally that we could relax the "identical distribution" assumption provided~\eqref{subgauss} is uniformly satisfied by the family of random variables.

\begin{definition} For fixed $(u_0, u_1) \in \mathcal{H}^s$, the map \eqref{eq.proba} is a measurable map from $(\Omega,{\mathcal F})$ to ${\mathcal H}^0$ endowed with the Borel sigma algebra 
since the partial sums from a Cauchy sequence
in $L^2(\Omega;{\mathcal H}^0)$. Thus \eqref{eq.proba} endows the space ${\mathcal H}^0(\T^3)$ with a probability measure which is direct image of $p$. Let us denote this measure by $\mu_{(u_0, u_1)}$. Then
$$\forall\, A \subset \mathcal{H}^0,\,\, 
\mu_{(u_0, u_1)} (A)= p ( \omega\in \Omega\,:\, (u_0^\omega, u_1^\omega) \in A).
$$
Denote by ${\mathcal M}^s$ the set of measures obtained following this construction and
$${\mathcal M}^s= \bigcup_{(u_0, u_1) \in \mathcal{H}^s} \{ \mu_{(u_0, u_1)}\}\,.
$$
\end{definition}
Let us recall some basic properties of these measures (see~\cite{BT1}).
\begin{proposition} 
For any $(u_0, u_1) \in \mathcal{H}^s$, the measure $ \mu_{(u_0, u_1)}$ is supported by $\mathcal{H}^s$. Furthermore, for any $s'>s$, if $(u_0, u_1) \notin \mathcal{H}^{s'}$, then $ \mu_{(u_0, u_1)}( \mathcal{H}^{s'})=0$. In other words,  the randomization \eqref{eq.proba} does not regularize in the scale
of the $L^2$-based Sobolev spaces (this fact is obvious for the Bernoulli randomization). Finally,
If $(u_0,u_1)$ have all their Fourier coefficients different from zero and if the measure $\theta$ charges all open sets of $\R$ then the support of $\mu$ is  
${\mathcal H^s}$ (recall that the support of $\mu$ is the complementary of the largest open set $U\subset {\mathcal H}^s$ such that $\mu(U)=0$).
\end{proposition}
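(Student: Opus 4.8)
The plan is to establish the three assertions separately. Throughout set $\sigma^2=\int_\R x^2\,d\theta(x)$; by \eqref{subgauss} all moments of $\theta$ are finite, so $0\le\sigma^2<\infty$, and for the last two assertions we use that $\theta$ is nondegenerate, i.e. $\sigma^2>0$ (for the third this is automatic, since $\delta_0$ does not charge every open set). Recall that on $\T^3$ one has $\|u_0\|_{H^s}^2\simeq|a_0|^2+\sum_{n\in\Z^3_\star}\langle n\rangle^{2s}\bigl(|b_{n,0}|^2+|c_{n,0}|^2\bigr)$, and similarly for $u_1$ at regularity $s-1$, the functions $\cos(n\cdot x),\sin(n\cdot x)$ being orthogonal. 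For the first assertion I would compute a second moment: by independence and the mean-zero property the cross terms vanish, so, interchanging expectation and the (nonnegative) sum by Tonelli,
\[
\mathbb{E}\bigl[\|u_0^\omega\|_{H^s}^2\bigr]\simeq\sigma^2\|u_0\|_{H^s}^2<\infty,\qquad \mathbb{E}\bigl[\|u_1^\omega\|_{H^{s-1}}^2\bigr]\simeq\sigma^2\|u_1\|_{H^{s-1}}^2<\infty.
\]
Hence $(u_0^\omega,u_1^\omega)\in\mathcal H^s$ almost surely, which says exactly that $\mu_{(u_0,u_1)}$ is carried by $\mathcal H^s$.

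For the no-regularization statement, fix $s'>s$ and suppose $(u_0,u_1)\notin\mathcal H^{s'}$, say $u_0\notin H^{s'}$. Put $Z_n=\langle n\rangle^{2s'}\bigl(|\beta_{n,0}|^2|b_{n,0}|^2+|\gamma_{n,0}|^2|c_{n,0}|^2\bigr)$, so that $\|u_0^\omega\|_{H^{s'}}^2\simeq|\alpha_0 a_0|^2+\sum_nZ_n$. The $Z_n$ are independent and nonnegative with $\sum_n\mathbb{E}[Z_n]=\sigma^2\sum_n\langle n\rangle^{2s'}(|b_{n,0}|^2+|c_{n,0}|^2)=+\infty$. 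Convergence of $\sum_nZ_n$ is a tail event, so by Kolmogorov's zero-one law it has probability $0$ or $1$, and it remains to exclude convergence. For this I would pass to the Laplace transform: by independence, and using the elementary bound $1-e^{-a}\ge\tfrac12\min(a,1)$,
\[
\mathbb{E}\Bigl[\exp\bigl(-\textstyle\sum_nZ_n\bigr)\Bigr]=\prod_n\mathbb{E}\bigl[e^{-Z_n}\bigr],\qquad 1-\mathbb{E}\bigl[e^{-Z_n}\bigr]\ge\tfrac12\,\mathbb{E}\bigl[\min(Z_n,1)\bigr].
\]
A short case distinction based on the nondegeneracy of $\theta$ (according to whether infinitely many of the $\mathbb{E}[Z_n]$ exceed a fixed threshold, or else all are eventually small) then yields $\sum_n\mathbb{E}[\min(Z_n,1)]=+\infty$, so the infinite product vanishes and $\sum_nZ_n=+\infty$ almost surely. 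Thus $u_0^\omega\notin H^{s'}$ almost surely and $\mu_{(u_0,u_1)}(\mathcal H^{s'})=0$.

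For the last assertion, which is where the hypotheses that all Fourier coefficients are nonzero and that $\theta$ charges every open set are used, I would argue by a low/high frequency splitting. It suffices to prove that every ball $B=\{w\in\mathcal H^s:\|w-(v_0,v_1)\|_{\mathcal H^s}<\varepsilon\}$ with $(v_0,v_1)\in\mathcal H^s$ has positive measure. Let $P_{\le N}$ be the projection onto frequencies $|n|\le N$ and $P_{>N}=\mathrm{Id}-P_{\le N}$. First pick $N$ so large that $\|P_{>N}(v_0,v_1)\|_{\mathcal H^s}<\varepsilon/3$; since $\mathbb{E}[\|P_{>N}(u_0^\omega,u_1^\omega)\|_{\mathcal H^s}^2]=\sigma^2\|P_{>N}(u_0,u_1)\|_{\mathcal H^s}^2$ is the tail of a convergent series, after enlarging $N$ Chebyshev's inequality makes the high-frequency event $\{\|P_{>N}(u_0^\omega,u_1^\omega)\|_{\mathcal H^s}<\varepsilon/3\}$ have probability $>1/2$. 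On the finite-dimensional range of $P_{\le N}$, the map sending the finitely many variables $(\alpha_j,\beta_{n,j},\gamma_{n,j})_{0<|n|\le N,\,j=0,1}$ to $P_{\le N}(u_0^\omega,u_1^\omega)$ is a diagonal linear bijection, precisely because each coefficient $a_j,b_{n,j},c_{n,j}$ is nonzero; hence approximating $P_{\le N}(v_0,v_1)$ to within $\varepsilon/3$ amounts to constraining each of these variables to a prescribed nonempty open interval. As $\theta$ charges every open set and the variables are independent, this low-frequency event has positive probability, being a finite product of positive probabilities. The low- and high-frequency events involve disjoint families of random variables, hence are independent, so $\mu_{(u_0,u_1)}(B)$ is bounded below by the product of their probabilities, which is positive. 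Since $(v_0,v_1)$ and $\varepsilon$ were arbitrary, the support is all of $\mathcal H^s$.

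The main obstacle is the second assertion. The second-moment computation of the first part and the finite-dimensional approximation of the third part are routine once the nonvanishing of the coefficients is exploited; by contrast, in the zero-one dichotomy the divergence $\sum_n\mathbb{E}[Z_n]=+\infty$ does not by itself force $\sum_nZ_n=+\infty$ almost surely for general nonnegative summands, so the quantitative Laplace-transform (equivalently, Kolmogorov three-series) input genuinely enters there.
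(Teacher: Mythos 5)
Your proof is correct, but there is essentially nothing in this paper to compare it against: the proposition is stated as a recollection, with no proof given here and an explicit pointer to the first paper of the series (\cite{BT1}), so your write-up amounts to reconstructing the appendix material of that reference. The reconstruction is sound, and it follows the standard route for such statements. The first and third assertions are handled as one would expect: second moment plus Tonelli for the support, and a low/high frequency splitting for the identification of the support, where the nonvanishing of all Fourier coefficients is exactly what turns the low-frequency constraint into finitely many independent conditions of the form $\beta_{n,j}\in I_{n,j}$ with $I_{n,j}$ a nonempty open interval, each of positive $\theta$-measure. For the second assertion, your Laplace-transform step is the classical three-series mechanism, and the case distinction you only sketch does close, precisely because $Z_n$ is not an arbitrary nonnegative variable but has the form $t_n\beta_{n,0}^2+r_n\gamma_{n,0}^2$ with deterministic $t_n,r_n\ge 0$ and $\beta_{n,0},\gamma_{n,0}$ i.i.d.\ with law $\theta$: choosing $\epsilon>0$ with $\theta(|x|>\epsilon)>0$ yields $\mathbb{E}[\min(Z_n,1)]\ge c\,\min\bigl(\mathbb{E}[Z_n],1\bigr)$ for some $c>0$ depending only on $\theta$, and since $\sum_n\mathbb{E}[Z_n]=+\infty$, the right-hand sides have divergent sum in either branch of your dichotomy (infinitely many terms of size $\ge c$, or else eventually $c\,\mathbb{E}[Z_n]$ term by term). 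Finally, the nondegeneracy $\sigma^2>0$ you impose for the second assertion is not pedantry but a genuinely necessary (implicit) hypothesis: $\theta=\delta_0$ satisfies \eqref{subgauss}, yet then $\mu_{(u_0,u_1)}=\delta_{(0,0)}$ charges every $\mathcal{H}^{s'}$, so the non-regularization claim fails for it; you were right to flag this, and right that for the third assertion it is subsumed by the hypothesis that $\theta$ charges all open sets.
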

As mentioned above, for fixed $(u_0,u_1)$ the measure $\mu_{(u_0,u_1)}$ depends heavily on the choice of the random variables
$(\alpha_{j}(\omega),\beta_{n,j}(\omega),\gamma_{n,j}(\omega))$. On the other hand for a fixed randomisation 
$(\alpha_{j}(\omega),\beta_{n,j}(\omega),\gamma_{n,j}(\omega))$ the measure $\mu_{(u_0,u_1)}$ depends largely on the choice of $(u_0, u_1)$. 
 For instance, let us consider a gaussian randomisation , i.e. we suppose that $(\alpha_{j}(\omega),\beta_{n,j}(\omega),\gamma_{n,j}(\omega))$
 are independent centered gaussian random variables.
 Then if $(u_0,u_1)$ and $(\widetilde{u}_0,\widetilde{u}_1)$ are given by the Fourier expansions  
 $$ 
u_{j}(x)=a_{j}+\sum_{n\in\Z^3_{\star}}\Big(b_{n,j}\,\cos(n\cdot x)+c_{n,j}\sin(n\cdot x)\Big),\qquad \quad j=0,1
$$
and
$$ 
\widetilde{u}_{j}(x)=\widetilde{a}_{j}+\sum_{n\in\Z^3_{\star}}\Big(\widetilde{b}_{n,j}\,\cos(n\cdot x)+\widetilde{c}_{n,j}\sin(n\cdot x)\Big),\qquad \quad j=0,1
$$
 then, following Kakutani~\cite{Ka}, it is possible to prove that the associated measures $\mu_{(u_0,u_1)}$ and $\mu_{(\widetilde{u}_0,\widetilde{u}_1)}$ are mutually singular if
 \begin{equation}\label{sing}
\sum_{n}
\Big| \frac{\widetilde{b}_{n,j}}{b_{n,j}}-1\Big|^2+\Big| \frac{\widetilde{c}_{n,j}}{c_{n,j}}-1\Big|^2=+\infty .
 \end{equation}
 In other words, if \eqref{sing} is satisfied then there exists a set $A$ such that $\mu_{(u_0,u_1)}(A)=1$ and 
 $\mu_{(\widetilde{u}_0,\widetilde{u}_1)}(A)=0$.
 On the other hand if \eqref{sing} is not satisfied and the all the coefficients are non zero (or vanish pairwise simultaneously) then we have that $\mu_{(u_0,u_1)}\ll \mu_{(\widetilde{u}_0,\widetilde{u}_1)}\ll \mu_{({u}_0,{u}_1)}$ We refer to the Appendix for more precise statements concerning the dependence of $\mu_{(u_0,u_1)}$ on $(u_0,u_1)$ in the case of a gaussian randomisation.
 
We can now state our first result.
%%%%%%%%%%%%%%%
\begin{theorem}\label{main}
Let $M=\T^3$ with the flat metric and let us fix $\mu\in {\mathcal M}^s$, $0\leq s <1$. 
Then, there exists a full $\mu$ measure set $\Sigma \subset  {\mathcal H}^s(\T^3)$ 
such that for every $(v_0, v_1)\in \Sigma$, there exists a unique global solution $v$ of
of the non linear wave equation
 \begin{equation}\label{valna}
 (\partial_t^2-\Delta_{\T^3})v+v^3=0,\quad (v(0),\partial_t v(0))=(v_0,v_1)
 \end{equation}
satisfying
 $$
(v(t),\partial_t v(t)) \in \big(S(t)(v_0, v_1),\partial_t S(t)(v_0, v_1)\big)+ C(\R_t; H^1(\T^3) \times L^2(\T^3))
 $$ 
 ($S(t)$ denotes the free evolution defined by \eqref{free} below).
Furthermore, if we denote by 
$$\Phi(t) (v_0, v_1)\equiv (v(t),\partial_t v(t))$$ 
the flow thus defined, the set $\Sigma$ is invariant by the map $\Phi(t)$, namely
$$
\Phi(t)(\Sigma)=\Sigma,\qquad \forall\, t\in \R.
$$
Finally, for any $\varepsilon>0$ there exist $C, \delta >0$ such that for $\mu$ almost every $(v_0, v_1)\in \mathcal{H}^s( \T^3)$, there exists $M>0$ such that the global solution to~\eqref{valna} previously constructed satisfies 
$$
v(t)= S(t) \Pi^0(v_0, v_1)+ w(t),
$$
($\Pi_0$ is the orthogonal projector on constants), with
\begin{equation}\label{eq.largetime1} 
\|(w(t), \partial_t w(t)) \|_{\mathcal{H}^1(\T^3)} \leq 
\begin{cases} C (M+ |t|)^{\frac {1-s} s + \varepsilon} &\text{ if  $s>0$},\\
 C e^{C(t+M)^2} &\text{ if  $s=0$},
\end{cases}
 \end{equation}
and 
\begin{equation*}
\mu ((v_0,v_1)\in {\mathcal H}^s\,:\,M>\lambda) \leq  C e^{-\lambda^\delta}. 
\end{equation*}
\end{theorem}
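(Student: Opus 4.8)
The plan is to solve \eqref{valna} by linearising around the free evolution. Writing $f=f^\omega:=S(t)(v_0,v_1)$ and seeking the solution in the form $v=f+w$, the remainder $w$ (with vanishing Cauchy data) solves the forced equation
$$(\partial_t^2-\Delta_{\T^3})w+(f+w)^3=0,\qquad (w,\partial_t w)|_{t=0}=(0,0).$$
The whole point of the randomisation \eqref{coord} is that, although $(v_0,v_1)$ only lies in $\mathcal H^s$ with $s<1$, the random free evolution $f^\omega$ gains integrability. The first step is therefore to prove, from the sub-Gaussian hypothesis \eqref{subgauss} together with the Khintchine and Minkowski inequalities, that for admissible Strichartz pairs $(p,q)$ one has $f^\omega\in L^p([0,T];L^q(\T^3))$ almost surely, with a Gaussian large–deviation bound
$$p\big(\omega:\ \|f^\omega\|_{L^p([0,T];L^q)}>\lambda\big)\le C\exp\!\big(-c\,\lambda^2/\Lambda_T^2\big),$$
the constant $\Lambda_T$ depending only polynomially on $T$ and on $\|(v_0,v_1)\|_{\mathcal H^s}$.

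With these bounds in hand I would run a fixed-point argument for $w$ directly in the energy space $\mathcal H^1=H^1\times L^2$. Because the cubic nonlinearity is energy-subcritical in dimension three, the self-interaction $w^3$ is controlled by the deterministic Strichartz/energy theory, while the mixed terms $f^3,\ f^2w,\ fw^2$ produced by expanding $(f+w)^3$ are treated as perturbations whose space-time norms are finite almost surely by the first step. This yields, for almost every $\omega$, a unique local solution $w\in C([0,\tau];H^1)$ with $\partial_t w\in C([0,\tau];L^2)$ on an interval whose length depends only on the free-evolution norms; in particular it establishes the asserted decomposition $(v,\partial_t v)\in\big(S(t)(v_0,v_1),\partial_t S(t)(v_0,v_1)\big)+C(\R;\mathcal H^1)$.

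The core of the theorem is the globalisation, which replaces the invariant Gibbs measure of \cite{BT2} by an a priori energy estimate on $w$. I would introduce the modified energy
$$E(t)=\tfrac12\int_{\T^3}\big((\partial_t w)^2+|\nabla w|^2\big)\,dx+\tfrac14\int_{\T^3}w^4\,dx,$$
multiply the $w$-equation by $\partial_t w$ and integrate in space; the cubic self-interaction is absorbed into $E$, leaving
$$\frac{dE}{dt}=-\int_{\T^3}\big(f^3+3f^2w+3fw^2\big)\,\partial_t w\,dx.$$
The crucial structural point is the defocusing potential term $\tfrac14\|w\|_{L^4}^4\le E$, which controls $\|w\|_{L^4}$ by $E^{1/4}$; combined with Hölder's inequality, the Sobolev embedding $H^1\hookrightarrow L^6$, and, where necessary, an integration by parts in time moving the time derivative off $w$, each forcing term is bounded by a free-evolution norm times a strictly sub-linear power of $E$ (the worst being $E^{3/4}$). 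This produces a differential inequality whose time integration gives both global existence and the explicit growth rate \eqref{eq.largetime1}; the exponent $\tfrac{1-s}{s}+\varepsilon$ reflects the regularity gap between the data regularity $s$ and the energy regularity $1$ measured against the rate at which the relevant free-evolution norms are allowed to grow, and the degeneration of this balance as $s\to0$ is precisely what forces the weaker exponential bound $e^{C(t+M)^2}$ in the endpoint case $s=0$. I expect this step—rendering the energy increment sub-linear uniformly and extracting the sharp exponent down to $s=0$—to be the principal difficulty.

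Finally, I would take $M$ to be essentially the supremum, over dyadic time blocks, of the suitably normalised free-evolution Strichartz norms entering the energy estimate, so that the growth bound \eqref{eq.largetime1} holds with this random $M$. Summing the Gaussian large–deviation bounds of the first step over the dyadic blocks (a Borel–Cantelli/union-bound argument, the sub-exponent $\delta<2$ absorbing the polynomial loss from the summation) then gives $\mu(M>\lambda)\le Ce^{-\lambda^\delta}$, and $\Sigma$ is the full-measure set on which $M<\infty$. On $\Sigma$ the construction yields a genuine global flow $\Phi(t)$, and its invariance $\Phi(t)\Sigma=\Sigma$ follows from the autonomy and time-reversibility of \eqref{valna}: if $v=f+w$ is the global solution issued from $(v_0,v_1)\in\Sigma$, then for any $t_0$ the translate $v(\cdot+t_0)$ solves \eqref{valna} with data $(v(t_0),\partial_t v(t_0))$, and since $S(t_0)(v_0,v_1)\in\mathcal H^s$ while $(w(t_0),\partial_t w(t_0))\in\mathcal H^1$, re-expanding the free evolution of the shifted datum exhibits the same free-plus-$\mathcal H^1$ structure and growth, so $\Phi(t_0)(v_0,v_1)\in\Sigma$; reversibility gives the reverse inclusion.
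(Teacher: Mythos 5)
Your set-up --- the decomposition $v=S(t)(v_0,v_1)+w$, the sub-Gaussian large-deviation bounds on Strichartz-type norms of the randomized free evolution, and the modified energy $E$ with the defocusing term $\tfrac14\|w\|_{L^4}^4$ --- coincides with the paper's Section 2 and Appendix. But the heart of your globalisation argument rests on a false claim: it is not true that every forcing term in $\tfrac{dE}{dt}$ is strictly sublinear in $E$. The mixed term $\int_{\T^3} f\,w^2\,\partial_t w\,dx$ can only be estimated by $\|f\|_{L^\infty(\T^3)}\|w\|_{L^4}^2\|\partial_t w\|_{L^2}\leq C\|f\|_{L^\infty}E$, which is exactly linear: the factors $\|w\|_{L^4}^2\leq CE^{1/2}$ and $\|\partial_t w\|_{L^2}\leq CE^{1/2}$ are already saturated, and any H\"older split using $\|f\|_{L^p}$ with $p<\infty$ makes the power of $E$ strictly \emph{super}linear (this is precisely the difficulty of the $s=0$ case). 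Your proposed rescue --- integrating by parts in time to move $\partial_t$ off $w$ --- is fatal rather than helpful: it produces $\int \partial_t f\,w^3\,dx$, and $\partial_t f=\partial_t S(t)(v_0,v_1)$ has regularity $s-1<0$; since the randomization does not regularize in the Sobolev scale, $\|\partial_t f\|_{L^4(\T^3)}=+\infty$ almost surely. So the differential inequality is at best linear in $E^{1/2}$ with coefficient $\|f(t)\|_{L^\infty}$ (locally integrable a.s. only when $s>0$, via $W^{s,p}\subset L^\infty$), and Gronwall then yields bounds of size $\exp\bigl(C\int_0^t\|f\|_{L^\infty}\bigr)$, exponential in a power of $t$; it cannot by itself produce the polynomial rate $(M+|t|)^{\frac{1-s}{s}+\varepsilon}$ of \eqref{eq.largetime1}.

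The missing idea is Bourgain's high-low frequency decomposition run in reverse (the paper's Section 4): one solves with data the low-frequency part $\Pi_N(v_0,v_1)$, whose energy is $\lesssim N^{2(1-s+\varepsilon)}$, and treats $S(t)\Pi^N(v_0,v_1)$ as the forcing; the high-frequency free evolution has weighted Strichartz norms $O(N^{-s+\varepsilon})$ outside a set of measure $\leq Ce^{-cN^{\kappa}}$, so the Gronwall exponential stays $O(1)$ for $|t|\lesssim N^{s-2\varepsilon}$, giving $E^{1/2}(t)\lesssim N^{1-s+\varepsilon}\approx t^{(1-s+\varepsilon)/(s-2\varepsilon)}$; without this the exponent $\frac{1-s}{s}+\varepsilon$ has no derivation. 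Two further gaps: (i) for $s=0$ the $L^\infty$ control on $f$ is unavailable and the paper needs a genuinely different Yudovich-type argument ($L^j$ norms with constants $\sqrt j$, a slightly superlinear power $E^{\frac12(1+\frac6j)}$, and a bootstrap keeping $E^{1/2}\leq 2^j$ on intervals of length $\sim\sqrt j$), which your proposal does not contain; (ii) your invariant set $\Sigma=\{M<\infty\}$ is not obviously invariant: $\Phi(t_0)(v_0,v_1)$ is an $\mathcal{H}^1$ perturbation of $S(t_0)(v_0,v_1)$, and since $\mathcal{H}^1(\T^3)$ data do not give $L^1_{\text{loc}}L^\infty_x$ free evolution, such a perturbation need not have finite $M$. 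The paper resolves this by enlarging the set to $\Sigma=\Theta+\mathcal{H}^1$ and re-running the energy argument with nonzero $\mathcal{H}^1$ data for $w$; your ``re-expanding'' sentence gestures at this structure, but the set you actually defined is not closed under the operation you perform on it.
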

%%%%%%%%%%%%%%%%%%%%%%%%%%%%%%%%%%
Having established a large time (unique) dynamics on an invariant set of full measure on ${\mathcal H}^s(\T^3)$, 
there are a few very natural questions to address, and the very first one is the continuity of the flow. 
 Let us  recall that for any event $B$ (of non null probability) the conditionned probability $\mathcal{P}( \cdot \vert B)$ is the natural probability measure supported by $B$, defined by 
$$ \mathcal{P} ( A\vert B) = \frac{ \mathcal{P} (A\cap B) } { \mathcal{P}( B)}
$$ 
Notice (see below), that the sequences constructed following the approach by Lebeau and Christ-Colliander-Tao give an obstruction to the (deterministic) continuity of our flow. However, we are able to prove that it is still continuous {\em in probability} and consequently the super-critical Cauchy problem~\eqref{NLW}  
is well globally posed in the following {\em Hadamard-probabilistic} sense
\begin{theorem}\label{th_continuity}
Let us fix  $s\in (0,1)$,  let $A>0$ and let $B_A\equiv (V\in {\mathcal H}^s : \|V\|_{{\mathcal H}^s}\leq A)$  
be the closed ball of radius $A$ centered at the origin of ${\mathcal H}^s$ and let $T>0$. 
Let  $\mu\in {\mathcal M}^s$ and suppose that $\theta$ is symmetric.
Let $\Phi(t)$ be the flow of the cubic wave equations defined $\mu$ almost everywhere in Theorem~\ref{main}.
Then for $\varepsilon, \eta>0$,  we have the bound 
\begin{multline}\label{dimanche}
 \mu\otimes\mu\Big((V,V')\in {\mathcal H}^s\times {\mathcal H}^s\,:
  \| \Phi(t) (V) - \Phi(t) (V') \|_{X_T} >\varepsilon  \Bigm {\vert} 
  \\
  \| V-V'\|_{\mathcal{H}^s}< \eta \,\,{\rm and}\,\,( V,V')\in B_A\times B_A   \Big) \leq  g(\varepsilon,\eta),
\end{multline}
where
$
X_{T}\equiv (C ([0,T]; \mathcal{H}^s)\cap L^4([0,T]\times\T^3))\times C([0,T];H^{s-1})
$
and  $g(\varepsilon,\eta)$ is such that
$$
\lim_{\eta\rightarrow 0}g(\varepsilon,\eta)=0,\qquad \forall\,\varepsilon>0.
$$
Moreover, if  in addition we assume that the support of $\mu$ is the whole ${\mathcal H}^s$ then there exists 
$\varepsilon>0$ such that for every $\eta>0$ the left hand-side in \eqref{dimanche} is positive.
\end{theorem}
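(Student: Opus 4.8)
The plan is to decompose each flow into its rough free part and a smoother nonlinear remainder, and to show that both contributions to $\Phi(t)V-\Phi(t)V'$ are small off a set whose \emph{conditional} probability (given the closeness event) tends to $0$ with $\eta$. Writing $V=(v_0,v_1)$, $V'=(v_0',v_1')$ and, as in Theorem~\ref{main}, $v=S(t)V+w$, $v'=S(t)V'+w'$ with $(w,\partial_t w),(w',\partial_t w')\in C([0,T];H^1\times L^2)$, one has
$$
\Phi(t)V-\Phi(t)V'=\big(S(t)(V-V'),\partial_t S(t)(V-V')\big)+\big(W,\partial_t W\big),\qquad W:=w-w'.
$$
The free difference is controlled in $C([0,T];{\mathcal H}^s)$ by $\|V-V'\|_{{\mathcal H}^s}<\eta$ since $S(t)$ is an isometry of ${\mathcal H}^s$; the only genuinely probabilistic ingredient is the $L^4([0,T]\times\T^3)$ norm of $S(t)(V-V')$, for which I would invoke the probabilistic Strichartz estimates of \cite{BT1}. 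The remainder $W$ solves $(\partial_t^2-\Delta)W=-(v^3-(v')^3)$ with zero data, so writing $v^3-(v')^3=(v-v')(v^2+vv'+(v')^2)$ and $v-v'=S(t)(V-V')+W$, a stability/Gronwall argument on $[0,T]$ bounds $\|(W,\partial_t W)\|_{C([0,T];{\mathcal H}^1)}$ by the Strichartz norm of $S(t)(V-V')$ times a constant depending on $T$ and on the energy and Strichartz norms of $v,v'$; the latter are a priori controlled by the tail estimate \eqref{eq.largetime1} through a random parameter. Since $H^1\hookrightarrow{\mathcal H}^s\cap L^4$, this places $W$ in $X_T$.

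I would then organize everything around a good event $G_\lambda$ on which all random constants are $\le\lambda$: the parameters $M$ of Theorem~\ref{main} for both $V$ and $V'$, and the free Strichartz norms of $S(t)V$, $S(t)V'$ and $S(t)(V-V')$. On $E_\eta:=\{\|V-V'\|_{{\mathcal H}^s}<\eta\}\cap(B_A\times B_A)$ intersected with $G_\lambda$, the two estimates above yield $\|\Phi(t)V-\Phi(t)V'\|_{X_T}\le C(\lambda,T,A)\,\eta$. Choosing $\lambda=\lambda(\eta)\to\infty$ slowly enough that $C(\lambda(\eta),T,A)\,\eta<\varepsilon$, the set $F_\varepsilon\cap E_\eta\cap G_{\lambda(\eta)}$ (where $F_\varepsilon:=\{\|\Phi(t)V-\Phi(t)V'\|_{X_T}>\varepsilon\}$) is empty for small $\eta$, so that
$$
\mu\otimes\mu\big(F_\varepsilon\mid E_\eta\big)\le \mu\otimes\mu\big(G_{\lambda(\eta)}^c\mid E_\eta\big).
$$

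The hard part is precisely this conditional probability. The denominator $\mu\otimes\mu(E_\eta)$ is extremely small for small $\eta$, so bounding $G_\lambda^c$ by its (small) \emph{unconditional} probability is hopeless; one must show the bad events stay rare \emph{after} conditioning on $E_\eta$. This is exactly where symmetry of $\theta$ is used. For each frequency, pass from $(\beta_n,\beta_n')$ to the pair (sum, difference); since $\theta$ is symmetric and $V,V'$ are independent, conditionally on the differences the sums are again symmetric (the measure-preserving map $(\beta_n,\beta_n')\mapsto(-\beta_n',-\beta_n)$ flips the sum and fixes the difference). Hence the random signs driving $S(t)(V-V')$ and those driving $S(t)V,S(t)V'$ remain independent and uniform after conditioning on $E_\eta$, an event that constrains only the \emph{magnitudes} of the differences. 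Consequently the Khinchine-type probabilistic Strichartz bounds and the large-deviation tails behind \eqref{eq.largetime1} can all be run conditionally, giving $\mu\otimes\mu(\|S(t)(V-V')\|_{L^4}>\delta\mid E_\eta)\le e^{-c(\delta/\eta)^2}$ (using $\|V-V'\|_{{\mathcal H}^0}\le\|V-V'\|_{{\mathcal H}^s}<\eta$ for $s>0$) and $\mu\otimes\mu(\,\cdot\mid E_\eta)$-tails for the parameters $M$; feeding these into the display and optimizing in $\lambda$ produces $g(\varepsilon,\eta)$ with $g(\varepsilon,\eta)\to0$ as $\eta\to0$.

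For the converse, under $\mathrm{supp}\,\mu={\mathcal H}^s$ I would exhibit, for a fixed $\varepsilon>0$ and every $\eta>0$, a positive-$\mu\otimes\mu$-measure family of pairs that are $\eta$-close in ${\mathcal H}^s$, lie in $B_A\times B_A$, yet have $X_T$-separated flows. The separation is supplied by the failure of deterministic continuity in Theorem~\ref{th1}: the Lebeau and Christ--Colliander--Tao sequences furnish smooth data, arbitrarily close in ${\mathcal H}^s$, whose classical (hence $\Phi$-)flows separate by a fixed amount in $X_T$ within time $T$; for $s<1/2$ this is already visible in the free $L^4([0,T]\times\T^3)$ norm, which $S(t)$ fails to control from ${\mathcal H}^s$. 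Since $\mathrm{supp}\,\mu={\mathcal H}^s$, $\mu$ charges every nonempty open subset of ${\mathcal H}^s$, so $\mu\otimes\mu$ charges any such open configuration and the left-hand side of \eqref{dimanche} is positive. The delicate point is that $F_\varepsilon$ need not be open, as $\Phi$ is only continuous in probability; I would sidestep this by placing the separating configuration near data at which the deterministic flow is continuous and robust, so the separation persists on a genuine open set that the full-support measure can charge.
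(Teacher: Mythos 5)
Your treatment of the continuity bound \eqref{dimanche} follows the paper's route: reduce, via the symmetry of $\theta$, to \emph{conditional} large deviation bounds for the free evolution, then run energy/Gronwall estimates for the $w_j$'s and for $w_0-w_1$, letting the threshold grow slowly in $\eta$ (the paper takes $\eta^{1-\alpha}$ for the difference and $\beta\log\log(\eta^{-1})$ for the individual solutions, which is exactly your ``$\lambda(\eta)\to\infty$ slowly''). One point in your symmetrization is wrong, though fixable. You claim that $E_\eta$ ``constrains only the magnitudes of the differences''; it does not: it also contains the constraint $(V,V')\in B_A\times B_A$. This matters because a Khinchine-type conditional bound requires re-randomizing signs frequency by frequency, i.e.\ applying your map $(\beta_n,\beta_n')\mapsto(-\beta_n',-\beta_n)$ on an \emph{arbitrary subset} of frequencies; such a partial flip mixes the coordinates of $V$ and $V'$ and does not preserve $B_A\times B_A$ (two data of norm $A$ carried by disjoint frequency sets produce a mixture of norm $\sqrt{2}A$). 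The transformation that works, used in the paper's Proposition~\ref{ochak} (via Lemmas~\ref{lem.cond} and~\ref{ll2}), multiplies \emph{both} coordinates by the \emph{same} random sign, $(\beta_n,\beta_n')\mapsto(h_n\beta_n,h_n\beta_n')$: by symmetry of $\theta$ this preserves the joint law, it fixes all the magnitudes $|\beta_n|$, $|\beta_n'|$, $|\beta_n\pm\beta_n'|$ (hence the whole conditioning event), and it re-randomizes the signs of the Fourier coefficients of $V-V'$ and of each $V_j$, each bad event being then handled separately by the Bernoulli tail estimate. Also, you do not need conditional tails for the growth parameter $M$ of \eqref{eq.largetime1} (which would force you to redo the whole Section~4 machinery conditionally): on $[0,T]$ it suffices to control the free Strichartz norms of $S(t)V_j$ conditionally and apply Gronwall directly, as the paper does.

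The genuine gap is in your proof of the last assertion (positivity of the left-hand side of \eqref{dimanche}). You propose to exhibit an open set of pairs, near the Christ--Colliander--Tao/Lebeau data, on which the flows are $X_T$-separated, and then charge it using full support. But $\Phi$ is only defined $\mu$-almost everywhere, and nothing is known about its behaviour at, or deterministically near, any prescribed point of $\mathcal{H}^s$: for a $\mu\otimes\mu$-typical pair $(V,V')$ close to $(U_n,0)$ there is no way to assert that $\Phi(t)V$ stays close to the classical solution with data $U_n$ --- this is precisely what ill-posedness forbids. Your proposed repair (``place the separating configuration near data at which the deterministic flow is continuous and robust'') is self-defeating: where the deterministic flow is $\mathcal{H}^s$-continuous there is no separation, and where there is separation there is no continuity; moreover typical perturbations under $\mu$ are merely $\mathcal{H}^s$, so smooth-data well-posedness cannot be invoked either. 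The paper's argument is structured precisely to avoid evaluating $\Phi$ anywhere: assuming the left-hand side vanishes for every $\varepsilon$ (with some $\eta$), one builds a full-measure set $\Sigma_1$ on which $\Phi$ is uniformly continuous on $\Sigma_1\cap B_A$; by full support this set is dense in $B_A$, so $\Phi$ extends to a uniformly continuous map $\overline{\Phi(t)}$ on $B_A$; Lemma~\ref{rex1} (passing to the limit in the equation, plus the $L^4$--$L^{4/3}$ estimate and a uniqueness argument on small subintervals) identifies $\overline{\Phi(t)}$ with the classical flow on smooth data; applying this to the sequence $(U_n)$ of Theorem~\ref{th1} gives simultaneously $\overline{\Phi(t)}(U_n)\to 0$ in $X_T$ (uniform continuity and $U_n\to 0$) and blow-up of $\|\overline{\Phi(t)}(U_n)\|_{L^\infty([0,T];\mathcal{H}^s)}$, a contradiction. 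Without this indirect step, or a substitute for it, your direct construction does not produce any set of positive $\mu\otimes\mu$-measure inside the event $\{\|\Phi(t)V-\Phi(t)V'\|_{X_T}>\varepsilon\}$.
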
 
%%%%%%%%%%%%%%%%%%%%%%%%%%%%%%%%%%%%%%%%
In other words, as soon as $\eta \ll\varepsilon$, among the initial data which are $\eta$-close to each other, the probability of finding two for which the corresponding solutions to~\eqref{NLW} do not remain $\varepsilon$ close to each other, is very small ! 
The last part of the statement is saying that  the deterministic version of the uniform continuity property \eqref{dimanche} does not hold.
A crucial element in the proof is the ill-posedness result displayed in Theorem~\ref{th1}.
It is likely that Theorem~\ref{th_continuity} also holds for $s=0$, modulo some additional technicalities. 
\par
%
%%%%%%%%%%%%%%%%%%%%%%%%%%%%%%%%%%
In a forthcoming work, we show that similar results could be obtained for general manifolds by modifying accordingly the randomization. 
\par
As mentioned in \cite{BT1} it would be interesting to develop similar results in the case of the nonlinear Schr\"odinger equation (NLS).
In this case there are at least two difficulties compared to the wave equation. The first one is that the smoothing in the nonlinearity is no longer present
in the  case of NLS. The second one is that the deterministic Cauchy theory in the case of NLS, posed on a compact domain is much more 
intricate compared to the nonlinear wave equation or the NLS in the euclidean space (see e.g \cite{Bo0,BGT1}). 
One can however show that in some cases one may at least control a.s. the first iteration at a super critical regularity (see the appendix of \cite{Tz}). Another approach based on subscribing the singular part of the nonlinearity is developed in \cite{Bo2,CO}.  Finally, let us mention that we obtained with Thomann a first step toward similar results for the non linear Schr\"odinger equation in~\cite{BTT}.
\par
The remaining part of this paper is organized as follows. We complete this introduction by introducing several notations.
In the following section, we give the global existence part of the proof of Theorem~\ref{main} for $s>0$.
Next, we construct an invariant set of full measure. Section~4 is devoted to the possible growth of Sobolev norms for $s>0$. 
We then consider the case $s=0$ is Section~5. Section~6 is devoted to the proof of Theorem~\ref{th_continuity}. Finally in an appendix, we collect the results
on random series used in the previous sections. We also prove statement giving a criterium for the orthogonality of two measures of ${\mathcal M}^s$.
\par
\noindent {\bf Notation.}  A probability measure $\theta$ on $\R$ is called symmetric if $\int_{\R}f(x)d\theta(x)=\int_{\R}f(-x)d\theta(x)$ for every $f\in L^1(d\theta)$.
A real random variable is called symmetric if its distribution is a symmetric measure on $\R$. 

\par
{\bf Acknowledgement.} We benefited from discussions with Jean-Pierre Kahane, Herbert Koch and Daniel Tataru. 
%%%%%%%%%%%%%%
\section{Almost sure global well posedness for $s>0$}
Let us first recall the following local existence result.
\begin{proposition}\label{prop.local}
Consider the problem
\begin{equation}\label{model}
(\partial_t^2-\Delta_{\T^3})v+(f+v)^3=0\,.
\end{equation}
There exists a constant $C$ such that for every time interval $I=[a,b]$ of size $1$, 
every $\Lambda\geq 1$, every 
$
(v_0,v_1,f)\in H^1\times L^2\times L^3(I,L^6)
$
satisfying
$
\|v_0\|_{H^1}+\|v_1\|_{L^2}+\|f\|^3_{L^3(I,L^6)}\leq \Lambda
$
there exists a unique solution on the time interval $[a,a+C^{-1}\Lambda^{-2}]$ of \eqref{model} with initial data
$$
v(a,x)=v_0(x), \quad \partial_t v(a,x)=v_1(x)\,.
$$
Moreover the solution satisfies
$
\|(v,\partial_t v)\|_{L^\infty([a,a+\tau],H^1\times L^2)}\leq C\Lambda,
$
$(v,\partial_t v)$ is unique in the class $L^\infty([a,a+\tau],H^1\times L^2)$ and the dependence in time is continuous.
\end{proposition}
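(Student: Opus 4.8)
The plan is to solve \eqref{model} by a contraction mapping argument in the energy space, exploiting the fact that the cubic nonlinearity is energy-subcritical in three dimensions (the scaling-critical regularity being $s_c=1/2<1$). First I would recast the equation in Duhamel form: writing $S(t)$ for the free wave group of \eqref{free}, a function $v$ solves \eqref{model} on $I'=[a,a+\tau]$ with the prescribed data iff it is a fixed point of
$$
\Phi(v)(t)=S(t-a)(v_0,v_1)-\int_a^t\frac{\sin((t-s)\sqrt{-\Delta})}{\sqrt{-\Delta}}\,(f+v)^3(s)\,ds,
$$
where on the zero Fourier mode the operator is read as multiplication by $(t-s)$; this mode only produces a bounded, at most linearly growing, contribution on $I'$ and plays no role in the estimates. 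I would look for the fixed point in the ball of radius $R=C\Lambda$ of $C(I';\mathcal{H}^1)$.

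The two ingredients are the energy inequality for the wave equation,
$$
\|(\Phi(v),\partial_t\Phi(v))\|_{L^\infty(I';\mathcal{H}^1)}\leq C_0\big(\|(v_0,v_1)\|_{\mathcal{H}^1}+\|(f+v)^3\|_{L^1(I';L^2)}\big),
$$
and the Sobolev embedding $H^1(\T^3)\hookrightarrow L^6(\T^3)$. Combining them, I bound the nonlinearity by $\|(f+v)^3\|_{L^1(I';L^2)}\le\|f+v\|_{L^3(I';L^6)}^3\le(\|f\|_{L^3(I';L^6)}+\tau^{1/3}\|v\|_{L^\infty(I';L^6)})^3$. Since $\|f\|_{L^3(I';L^6)}^3\le\Lambda$ and $\|v\|_{L^\infty(I';L^6)}\lesssim R=C\Lambda$, choosing $\tau=c\Lambda^{-2}$ renders the term $\tau^{1/3}\|v\|_{L^\infty(I';L^6)}$ comparable to $c^{1/3}\Lambda^{1/3}$, so the whole nonlinear contribution is $\lesssim\Lambda$ and $\Phi$ maps the ball into itself once $C$ is fixed large and $c$ small. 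Thus the uniform existence time $C^{-1}\Lambda^{-2}$ is exactly what this balancing produces.

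For the contraction I would factor $(f+v)^3-(f+w)^3=(v-w)\big((f+v)^2+(f+v)(f+w)+(f+w)^2\big)$ and estimate by H\"older, pairing $\|v-w\|_{L^6}$ against the $L^3$ norm of the quadratic factor; integrating in time with H\"older gives a Lipschitz constant $\lesssim\tau^{1/3}\|f\|_{L^3(I';L^6)}^2+\tau R^2\lesssim c^{1/3}+c$, which is $<1$ for $c$ small. The Banach fixed point theorem then yields a unique solution in the ball, the bound $\|(v,\partial_t v)\|_{L^\infty(I';\mathcal{H}^1)}\le C\Lambda$, and continuity in $t$ directly from the Duhamel formula. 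Unconditional uniqueness in the full class $L^\infty(I';H^1\times L^2)$ follows by a standard argument: any two such solutions are bounded, hence on sufficiently short subintervals they lie in a common ball on which the contraction applies, and one propagates the coincidence across $I'$.

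The main point to get right is the simultaneous balancing of the data norm, the source norm $\|f\|_{L^3(I';L^6)}^3$, and the cubic self-interaction, so that a single time $\tau\sim\Lambda^{-2}$ works uniformly; the delicate feature is that $f$ is controlled only in $L^3(I';L^6)$ and must enter with exactly the right power, while the time integrability gained from $L^\infty(I';L^6)\subset L^3(I';L^6)$ on a short interval is precisely what furnishes the small factors $c^{1/3}$. I note that genuine Strichartz estimates (Kapitanskii) are not needed at this regularity, the Sobolev embedding sufficing; they would however provide the additional $L^4([0,T]\times\T^3)$ control appearing in the space $X_T$ used later.
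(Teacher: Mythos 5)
Your proof is correct and follows essentially the same route as the paper's: the Duhamel formulation plus a Banach fixed point in the energy space on a time interval of length $\sim\Lambda^{-2}$, with the nonlinearity controlled in $L^1_tL^2_x$ via the Sobolev embedding $H^1(\T^3)\subset L^6(\T^3)$ and H\"older in time. If anything, your treatment is slightly more careful than the paper's write-up: you handle $f$ purely through its $L^3(I;L^6)$ norm (where the paper's displayed estimate loosely writes a sup in time on $f$), and you spell out the contraction estimate and the unconditional uniqueness in $L^\infty(H^1\times L^2)$, which the paper dispatches as ``a similar argument.''
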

\begin{proof}
By translation invariance in time, we can suppose that $I=[0,1]$.
Define the free evolution $S(t)$ by
\begin{equation}\label{free}
S(t)(v_0,v_1)\equiv
\cos(t\sqrt{-\Delta})(v_0)+\frac{\sin(t\sqrt{-\Delta})}{\sqrt{-\Delta}}(v_1)
\end{equation}
with the natural convention concerning the zero Fourier mode.
Then we can rewrite the problem as
\begin{equation}\label{Duhamel}
v(t)=S(t)(v_0,v_1)-\int_{0}^t \frac{\sin(t\sqrt{-\Delta})}{\sqrt{-\Delta}}((f(\tau)+v(\tau))^3d\tau\,.
\end{equation}
Set
$$
\Phi_{v_0,v_1,f}(v)\equiv  S(t)(v_0,v_1)-\int_{0}^t \frac{\sin(t\sqrt{-\Delta})}{\sqrt{-\Delta}}((f(\tau)+v(\tau))^3d\tau.
$$
Then for $T\in (0,1]$, using the Sobolev embedding $H^1(\T^3)\subset L^6(\T^3)$, we get
\begin{multline*}
\|\Phi_{v_0,v_1,f}(v)\|_{L^\infty([0,T],H^1)} \leq  C(\|v_0\|_{H^1}+\|v_1\|_{L^2}+T\sup_{\tau\in[0,T]}\|f(\tau)+v(\tau)\|_{L^6}^3)
\\
\leq 
C(\|v_0\|_{H^1}+\|v_1\|_{L^2}+\sup_{\tau\in [0,T]}\|f(\tau)\|_{L^6}^3)+CT\|v\|^3_{L^\infty([0,T],H^1)}
\end{multline*}
It is now clear that for $T\approx \Lambda^{-2}$ the map $\Phi_{u_0,u_1,f}$ send the ball
$(v:\|v\|_{L^\infty([0,T],H^1)}\leq C\Lambda)$ onto. Moreover by a similar argument, we obtain that this map is a contraction on the
same ball. Thus we obtain the existence part and the bound on $v$ in $H^1$. The estimate of $\|\partial_t v\|_{L^2}$ follows by differentiating
in $t$ the Duhamel formula \eqref{Duhamel}. This completes the proof of Proposition~\ref{prop.local}.
\end{proof}
We can now deduce the global existence and uniqueness part in Theorem~\ref{main} in the case $s>0$.
\begin{proposition}\label{th.1}
Assume that $s>0$ and let us fix $\mu\in {\mathcal M}^s$.
Then, for $\mu$ almost every $(v_0, v_1) \in {\mathcal H}^s(\T^3)$, there exists a unique global solution 
$$
(v(t),\partial_t v(t)) \in (S(t)(v_0, v_1),\partial_t S(t)(v_0, v_1))+ C(\R; H^1(\T^3) \times L^2(\T^3))$$ 
of the non linear wave equation
\begin{equation*}
(\partial_t^2-\Delta_{\T^3})v+v^3=0
\end{equation*}
with initial data
$$
v(0,x)=v_0(x), \quad \partial_t v(0,x)=v_1(x)\,.
$$
\end{proposition}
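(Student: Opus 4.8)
The plan is to seek the solution in the form $v=S(t)(v_0,v_1)+w$, so that the rough, merely $\mathcal{H}^s$ part of the data is carried by the explicit free evolution $f\equiv S(t)(v_0,v_1)$ while the nonlinear remainder $w$ is sought in the energy space. Since $(\partial_t^2-\Delta)f=0$, the function $w$ must solve $(\partial_t^2-\Delta)w+(f+w)^3=0$ with $w(0)=\partial_t w(0)=0$, which is precisely equation \eqref{model} of Proposition~\ref{prop.local} with forcing $f$. The only place where the randomization enters is the following gain of integrability: for $\mu$ almost every $(v_0,v_1)$ one has $f\in L^3_{\mathrm{loc}}(\R_t;L^6(\T^3))$, in fact $\|f\|_{L^3([0,T];L^6)}<\infty$ for every $T$, together with a Gaussian-type tail bound on this norm. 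I would deduce this from the random series estimates collected in the appendix: for fixed $(t,x)$ the quantity $f(t,x)$ is a sum of independent variables, so a Khintchine / sub-gaussian inequality bounds its $L^p(\Omega)$ norm by its $L^2(\Omega)$ norm, and the latter reduces to the $\mathcal{H}^0$ norm of the data; integrating in $x$ (in $L^6$) and in $t$ (in $L^3$) by Minkowski then gives the claim. The essential point is that, unlike the deterministic Strichartz inequality, this requires only $s\geq 0$.

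I would then restrict attention to the full-measure set on which $\|f\|_{L^3([0,T];L^6)}<\infty$ for every $T\in\N$. On this set Proposition~\ref{prop.local} produces a local solution $w$, unique in the class $L^\infty(H^1\times L^2)$, the existence time depending only on the current $H^1\times L^2$ norm of $(w,\partial_t w)$ and on $\|f\|_{L^3(I;L^6)}$ over the ambient unit interval. To promote this to a global solution it suffices to bound $\|(w,\partial_t w)\|_{H^1\times L^2}$ on every bounded interval, which I would do by an energy argument. Introduce
\[
E(t)=\tfrac12\|\partial_t w(t)\|_{L^2}^2+\tfrac12\|\nabla w(t)\|_{L^2}^2+\tfrac14\|w(t)\|_{L^4}^4 .
\]
Differentiating and using the equation, the purely cubic contribution $\int w^3\partial_t w$ is absorbed by the $\tfrac14\|w\|_{L^4}^4$ term (this is where the defocusing sign is used), leaving
\[
E'(t)=-\int_{\T^3}\partial_t w\,\bigl(3 f w^2+3 f^2 w+f^3\bigr)\,dx .
\]
By Hölder and the Sobolev embedding $H^1\hookrightarrow L^6(\T^3)$ the last two terms are bounded by $\|f\|_{L^6}^2E$ and $\|f\|_{L^6}^3E^{1/2}$, which are harmless since $\|f(t)\|_{L^6}\in L^3_{\mathrm{loc}}(\R_t)$; the first term is the delicate one, being a priori only $O\bigl(\|f\|_{L^6}E^{3/2}\bigr)$.

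I would then close the estimate to obtain an a priori bound on $E$, and hence on $\|(w,\partial_t w)\|_{H^1\times L^2}$, on every interval $[0,T]$; this excludes finite-time blow-up and, fed back into the local theory, allows one to reach any $T$ in finitely many steps. Running the argument backward in time gives the global solution on $\R$, and uniqueness in the affine class $(S(t)(v_0,v_1),\partial_t S(t)(v_0,v_1))+C(\R;H^1\times L^2)$ follows from the local uniqueness by the usual continuation argument. The main obstacle is exactly the control of the flux term $\int\partial_t w\, f w^2$: since $f$ carries only $\mathcal{H}^s$ regularity with $s<1$ it does not lie in $L^\infty$, so this term cannot be absorbed by the $L^4$ part of the energy, and it is the reason global existence is not a soft consequence of energy conservation. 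Taming it---and thereby converting the differential inequality above into a genuine a priori bound---is precisely where the almost-sure space-time integrability of the free evolution is indispensable; the sharp growth rate \eqref{eq.largetime1} is a refinement postponed to a later section, only finiteness on bounded intervals being needed here.
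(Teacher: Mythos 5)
Your decomposition $v=S(t)(v_0,v_1)+w$ and your energy functional are exactly those of the paper, and you correctly isolate the dangerous flux term $\int_{\T^3}\partial_t w\, f w^2\,dx$. But at precisely that point the proposal stops being a proof: with the only probabilistic input you allow yourself, namely $f\in L^3_{\mathrm{loc}}(\R_t;L^6(\T^3))$ almost surely, this term is $O\bigl(\|f(t)\|_{L^6}\,E^{3/2}\bigr)$, the differential inequality is superlinear in $E$, and Gronwall cannot close it; the sentence ``I would then close the estimate'' is exactly where the argument is missing, and no mechanism is offered. Worse, your diagnosis of the obstacle --- ``since $f$ carries only $\mathcal{H}^s$ regularity with $s<1$ it does not lie in $L^\infty$, so this term cannot be absorbed by the $L^4$ part of the energy'' --- is the opposite of what is true for the randomized data, and the fact it is false is the very mechanism of the paper's proof. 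For $s>0$, Corollary~\ref{random2} combined with the Sobolev embedding $W^{s,p}(\T^3)\subset L^\infty(\T^3)$ for $p>3/s$ gives, $\mu$-almost surely,
$$
f(t)\equiv\|S(t)(v_0,v_1)\|_{L^\infty(\T^3)}\in L^1_{\mathrm{loc}}(\R_t),
$$
i.e. the randomization upgrades the free evolution to a function bounded in $x$ for a.e.\ $t$, even though the data are only $\mathcal{H}^s$. With this input the flux term \emph{is} absorbed by the quartic part of the energy:
$$
\Bigl|\int_{\T^3}\partial_t w\, f w^2\,dx\Bigr|\le \|f(t)\|_{L^\infty(\T^3)}\,\|w\|_{L^4(\T^3)}^2\,\|\partial_t w\|_{L^2(\T^3)}\le C f(t)\,\mathcal{E}(w(t)),
$$
so that $\frac{d}{dt}\mathcal{E}^{1/2}\lesssim g(t)+f(t)\,\mathcal{E}^{1/2}$ with $f,g\in L^1_{\mathrm{loc}}(\R_t)$, and Gronwall then gives the global bound. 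This $L^1_{\mathrm{loc}}L^\infty_x$ estimate is also where the hypothesis $s>0$ is used, so your claim that ``the only place where the randomization enters'' is the $L^3_tL^6_x$ bound is incorrect.

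A sanity check that your weaker input cannot suffice by a soft Gronwall argument: the $L^3_tL^6_x$ bound holds already for $s=0$ (as you note), yet at $s=0$ the paper must run a genuinely different and more delicate Yudovich-type scheme (Section~5), with $L^j$ norms and $j\to\infty$, precisely because the $L^1_tL^\infty_x$ control is lost there. If global existence followed from the $L^3_tL^6_x$ bound alone via the energy inequality, that entire section would be redundant. So the gap is not presentational: the missing statement is the heart of the proposition.
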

\begin{proof}
We search $v$ under the form $v(t)= S(t) (v_0, v_1) +w(t)$. Then $w$ solves
\begin{equation}\label{eq.1}
(\partial_t^2-\Delta_{\T^3})w+(S(t)(v_0, v_1)+w)^3=0,\quad w\mid_{t=0} =0, \quad \partial_t w\mid_{t=0} = 0,
\end{equation}
From Corollary~\ref{random2}, if $\delta> 1+ \frac 1 p$, we know that $\mu$-almost surely
$$ \|\langle t\rangle^{-\delta}  S(t) (v_0, v_1)\|_{L^p( \mathbb{R}_t; W^{s,p}(\T^3))} <+\infty. $$
Taking $p$ large enough so that $\frac 3 p<s$ and consequently  $W^{s, p} (\T^3) \subset L^\infty(\T^3)$, we deduce that $\mu$-almost surely, 
\begin{equation}\label{eq.borne}
\begin{aligned}
g(t) &= \|S(t)(v_0, v_1)\|^3_{L^6( \T^3)}\in L^1_{\text{loc}} ( \R_t),
\\
f(t) &=  \|S(t)(v_0, v_1)\|_{L^\infty( \T^3)}\in L^1_{\text{loc}} ( \R_t).
\end{aligned}
\end{equation}
The local existence in Proposition~\ref{th.1} now follows from Proposition~\ref{prop.local} and the first estimate in~\eqref{eq.borne}. 
We also deduce from Proposition~\ref{prop.local}, 
that as long as the $H^1\times L^2$ norm of $(w, \partial_t w)$ remains bounded, the solution $w$ of~\eqref{eq.1} exists. Set
$$ 
{\mathcal E}(w(t)) = \frac 1 2 \int_{\T^3}\big( (\partial_t w)^2 + |\nabla_x w|^2 + \frac 1 2 w^4\big) dx\,.
$$
Using the equation solved by $w$, we now compute 
\begin{eqnarray*}
\frac{d} {dt} {\mathcal E}(w(t)) &=  &\int_{\T^3}\big( \partial_t w \partial_t^2 w + \nabla_x \partial _t w \cdot \nabla_x w + \partial_t w\, w ^3 \big)dx
\\
& =  &\int_{\T^3} \partial_t w \Bigl(\partial_t^2 w -\Delta w + w^3\Bigr) dx
\\
& =  &\int_{\T^3} \partial_t w \Bigl(w^3-  (S(t)(v_0, v_1)+ w)^3\Bigr) dx.
\end{eqnarray*}
Now, using the Cauchy-Schwarz and the H\"older inequalities, we can write
\begin{equation*}
\begin{aligned}
\frac{d} {dt} {\mathcal E}(w(t))  &\leq C \big({\mathcal E}(w(t))\big)^{1/2}  \|w^3-  (S(t)(v_0, v_1)+ w)^3\|_{L^2( \T^3)}\\
&\leq C \big({\mathcal E}(w(t))\big)^{1/2} 
\Bigl(\|S(t)(v_0, v_1)\|^3_{L^6( \T^3)} + \|S(t)(v_0, v_1)\|_{L^\infty( \T^3)} \|w^2\|_{L^2( \T^3)} \Bigr)\\
&\leq 
C \big({\mathcal E}(w(t))\big)^{1/2} 
 \Big(g(t) + f(t) \big({\mathcal E}(w(t))\big)^{1/2}  \Big)
\end{aligned}
\end{equation*}
and consequently, according to Gronwall inequality and~\eqref{eq.borne}, $w$ exists globally in time.
This completes the proof of Proposition~\ref{th1}.
\end{proof}
%%%%%%%%%%%%%%
%%%%%%%%%%%%%%%%%%%%%%%%%%%%%%%%%%%%%%%%%%%%%%%%%%
\section{Construction of an invariant set, $s>0$}
The construction of the previous section yields the global existence on a set of full $\mu$ measure but it does not exclude the possibility to have a dynamics 
sending for some $t\neq 0$ the set of full measure where the global existence holds to a set of small measure. 
Notice that in a similar discussion in \cite{BT2} the set of full measure where the global
existence holds is invariant by the dynamics by construction. 
Our purpose of this section is to establish a global dynamics on an invariant set of full measure in the context of the argument of the previous section.
\par
Define the sets
$$
\Theta\equiv
\big(
(v_0,v_1)\in {\mathcal H}^s\,:\,\|S(t)(v_0, v_1)\|^3_{L^6( \T^3)}\in L^1_{\text{loc}} ( \R_t)
,\,\,\, \|S(t)(v_0, v_1)\|_{L^\infty( \T^3)}\in L^1_{\text{loc}} ( \R_t)
\big)
$$
and
$
\Sigma\equiv\Theta+{\mathcal H}^1.
$
Then $\Sigma$ is of full $\mu$ measure for every $\mu\in {\mathcal H}^s$, since so is $\Theta$.
We have the following proposition.
\begin{proposition}\label{th.1.bis}
Assume that $s>0$ and let us fix $\mu\in {\mathcal M}^s$.
Then, for every $(v_0, v_1) \in \Sigma$, there exists a unique global solution 
$$
(v(t),\partial_t v(t)) \in (S(t)(v_0, v_1),\partial_t S(t)(v_0, v_1))+ C(\R; H^1(\T^3) \times L^2(\T^3))$$ 
of the non linear wave equation
\begin{equation}\label{shart}
(\partial_t^2-\Delta_{\T^3})v+v^3=0,\quad (v(0,x),\partial_{t} v(0,x))=(v_0(x),v_1(x))\, .
\end{equation}
Moreover for every $t\in\R$, 
$
(v(t),\partial_t v(t))\in\Sigma.
$
\end{proposition}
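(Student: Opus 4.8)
The plan is to reduce everything to the deterministic construction of Section~2, the only new inputs being a choice of affine reference $S(t)(\phi_0,\phi_1)$ adapted to a splitting of the data, and the group structure of the free evolution. Given $(v_0,v_1)\in\Sigma=\Theta+{\mathcal H}^1$, I would first fix a decomposition $(v_0,v_1)=(\phi_0,\phi_1)+(h_0,h_1)$ with $(\phi_0,\phi_1)\in\Theta$ and $(h_0,h_1)\in{\mathcal H}^1$, and seek $v$ in the form $v=S(t)(\phi_0,\phi_1)+w$. Then $w$ solves
\begin{equation*}
(\partial_t^2-\Delta_{\T^3})w+(S(t)(\phi_0,\phi_1)+w)^3=0,\qquad (w,\partial_t w)|_{t=0}=(h_0,h_1),
\end{equation*}
which is exactly \eqref{model} with forcing $f=S(t)(\phi_0,\phi_1)$. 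By the very definition of $\Theta$ the quantities $\|S(t)(\phi_0,\phi_1)\|_{L^6}^3$ and $\|S(t)(\phi_0,\phi_1)\|_{L^\infty}$ lie in $L^1_{\mathrm{loc}}(\R_t)$ for \emph{this specific} $(\phi_0,\phi_1)$ (no longer merely almost surely), so Proposition~\ref{prop.local} applies on every unit subinterval and yields local existence and uniqueness of $w$ in the class $C(I;H^1\times L^2)$.

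Second, I would globalize $w$ by the energy argument already carried out in the proof of Proposition~\ref{th.1}: the energy ${\mathcal E}(w(t))$ satisfies the same differential inequality $\frac{d}{dt}{\mathcal E}(w)\le C{\mathcal E}(w)^{1/2}\big(g(t)+f(t){\mathcal E}(w)^{1/2}\big)$ with $g(t)=\|S(t)(\phi_0,\phi_1)\|_{L^6}^3$ and $f(t)=\|S(t)(\phi_0,\phi_1)\|_{L^\infty}$ both locally integrable by construction, the only change being that ${\mathcal E}(w(0))$ is now finite instead of zero, because $(h_0,h_1)\in{\mathcal H}^1$. Gronwall then keeps ${\mathcal E}(w(t))$ finite on every compact interval, hence $w\in C(\R;H^1)$ globally. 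To check that $v$ lands in the announced affine class I would introduce the full free propagator $\vec S(t)(a,b)=(S(t)(a,b),\partial_t S(t)(a,b))$ and note $(v,\partial_t v)-\vec S(t)(v_0,v_1)=(w,\partial_t w)-\vec S(t)(h_0,h_1)$; since $(h_0,h_1)\in{\mathcal H}^1$ we have $\vec S(t)(h_0,h_1)\in C(\R;H^1\times L^2)$, so the difference lies in $C(\R;H^1\times L^2)$. Uniqueness in that class follows by rewriting any competitor $\tilde v$ as $S(t)(\phi_0,\phi_1)+\tilde w$ with $\tilde w=(\tilde v-S(t)(v_0,v_1))+S(t)(h_0,h_1)$, whose associated pair is in $C(\R;H^1\times L^2)$ and solves the same $w$-equation with the same data; the uniqueness of Proposition~\ref{prop.local} on consecutive unit intervals forces $\tilde w=w$. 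This also shows the construction is independent of the chosen decomposition.

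The remaining and genuinely new point is the invariance $(v(t),\partial_t v(t))\in\Sigma$ for every $t$, and I expect the small crux to be the stability of $\Theta$ under the free flow, which is nonetheless dispatched by the group law $\vec S(s)\vec S(t_0)=\vec S(s+t_0)$. This gives $S(s)\big(\vec S(t_0)(\phi_0,\phi_1)\big)=S(s+t_0)(\phi_0,\phi_1)$, so the two defining integrability conditions of $\Theta$, evaluated at $\vec S(t_0)(\phi_0,\phi_1)$, are mere time-translates in $s$ of those for $(\phi_0,\phi_1)$ and hence remain in $L^1_{\mathrm{loc}}(\R_s)$; thus $\vec S(t_0)(\phi_0,\phi_1)\in\Theta$. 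Since $(v(t),\partial_t v(t))=\vec S(t)(\phi_0,\phi_1)+(w(t),\partial_t w(t))$ with $(w(t),\partial_t w(t))\in{\mathcal H}^1$, we conclude $(v(t),\partial_t v(t))\in\Theta+{\mathcal H}^1=\Sigma$. Finally, the time reversibility of \eqref{shart}, through which $\Phi(-t)$ inverts $\Phi(t)$, upgrades this inclusion to the equality $\Phi(t)(\Sigma)=\Sigma$ recorded in Theorem~\ref{main}.
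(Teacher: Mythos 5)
Your proposal is correct and follows essentially the same route as the paper: the same splitting $(v_0,v_1)=(\phi_0,\phi_1)+(h_0,h_1)\in\Theta+{\mathcal H}^1$, the same ansatz $v=S(t)(\phi_0,\phi_1)+w$ with the energy inequality and Gronwall (now with ${\mathcal E}(w(0))$ finite rather than zero), and the same key observation $S(t)(\Theta)=\Theta$ to get invariance of $\Sigma$. The only difference is that you spell out details the paper leaves implicit --- membership in the affine class relative to $S(t)(v_0,v_1)$ rather than $S(t)(\phi_0,\phi_1)$, uniqueness via iterated local uniqueness from Proposition~\ref{prop.local}, and the group-law justification of $S(t)(\Theta)=\Theta$ --- all of which are accurate.
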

\begin{proof}
By assumption, we can write $(v_0,v_1)=(\tilde{v}_0,\tilde{v}_1)+(w_0,w_1)$ with
$(\tilde{v}_0,\tilde{v}_1)\in\Theta$ and $(w_0,w_1)\in {\mathcal H}^1$.
We search $v$ under the form $v(t)= S(t) (\tilde{v}_0, \tilde{v}_1) +w(t)$. Then $w$ solves
\begin{equation*}
(\partial_t^2-\Delta_{\T^3})w+(S(t)(\tilde{v}_0, \tilde{v}_1)+w)^3=0,\quad w\mid_{t=0} =w_0, \quad \partial_t w\mid_{t=0} = w_1,
\end{equation*}
Now, exactly as in the proof of Proposition~\ref{th.1}, we obtain that
$$
\frac{d} {dt} {\mathcal E}(w(t)) \leq 
C \big({\mathcal E}(w(t))\big)^{1/2} 
 \Big(g(t) + f(t) \big({\mathcal E}(w(t))\big)^{1/2}  \Big),
 $$
 where
$$
g(t)= \|S(t)(\tilde{v}_0, \tilde{v}_1)\|^3_{L^6( \T^3)},\quad f(t) =  \|S(t)(\tilde{v}_0, \tilde{v}_1)\|_{L^\infty( \T^3)}.
$$
Therefore thanks to the Gronwall lemma, using that ${\mathcal E}(w(0))$ is well defined, we obtain the global existence for $w$.
Thus the solution of \eqref{shart} can be written as
 $$
 v(t)= S(t) (\tilde{v}_0, \tilde{v}_1) +w(t),\quad (w,\partial_t w)\in C(\R;{\mathcal H}^1).
 $$
Coming back to the definition of $\Theta$, we observe that 
$$
S(t)(\Theta)=\Theta.
$$
Thus $(v(t),\partial_t v(t))\in \Sigma$.
This completes the proof of Proposition~\ref{th.1.bis}.
\end{proof}
%%%%%%%%%%%%%%%%%%%%%%%%%%%%%%%%%%%%%%%%%%%%%%%%%%%%%%%%%%%
%%%%%%%%%%%%%%%%%%%%%%%%%%%%%%%%%%%%%%%%%%%%%%%%%%%%%%%%%%%%
\section{Bounds on the possible growth of the Sobolev norms, $s>0$}
In this section, we are going to  follow the high-low decomposition method of Bourgain~\cite{BoIMRN}, or more precisely the reverse of Bourgain's method, as developed  for instance in the work by Gallagher and Planchon~\cite{GaPl}  (see also \cite{KPV}) or Bona and the second author \cite{Bona} 
and refine the global well-posedness results obtained in the previous sections, to prove~\eqref{eq.largetime1} when $s>0$. Notice that a similar strategy has been recently used by Colliander and Oh~\cite{CO} in the context of the well posedness of the cubic one-dimensional non linear Schr\"odinger equation below $L^2$. In the context of randomly forced parabolic-type equations, such sub-linear estimates appear rather naturally (see the woork by Kuksin and Shirikyan~\cite{KS})  
\par
Let us introduce the Dirichlet projectors to low/high frequencies.
If a function $u$ on the torus is given by its Fourier series 
$$ 
u(x)=a+\sum_{n\in\Z^3_{\star}}\Big(b_{n}\,\cos(n\cdot x)+c_{n}\sin(n\cdot x)\Big),
$$ 
for $N\geq 0$, an integer, we set
\begin{equation}\label{eq.proj}
\Pi_{N}(u)\equiv
a+\sum_{|n|\leq N}\Big(b_{n}\,\cos(n\cdot x)+c_{n}\sin(n\cdot x)\Big),
\quad 
\Pi^{N}(u)\equiv (1-\Pi_{N})(u).
\end{equation}
We also set $\Pi_{0}(u)\equiv a$.
The goal of this section is to prove the following statement.
%%%%%%%%%%%%%
\begin{proposition}\label{th.3}
Let $1>s>0$ and $\mu\in {\mathcal M}^s$. Consider the flow of \eqref{shart} established in the previous section.
Then for any $\varepsilon>0$ there exist $C, \delta >0$ such that for every $(v_0, v_1)\in \Sigma$, 
there exists $M>0$ such that the global solution to~\eqref{shart} constructed in the previous section satisfies 
$$ v(t)= S(t) \Pi^0(v_0, v_1)+ w(t), 
\qquad \|(w(t), \partial_t w(t)) \|_{\mathcal{H}^1(\T^3)} \leq C (M+ |t|)^{\frac {1-s} s + \varepsilon} ,
$$ 
with 
$ \mu (M>\lambda) \leq C e^{-\lambda^\delta}\,.
$
\end{proposition}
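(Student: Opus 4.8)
The plan is to reduce the statement to a polynomial-in-time bound on the energy $\mathcal{E}(w(t))$ and to obtain such a bound by Bourgain's high--low method. Writing $v(t)=S(t)\Pi^0(v_0,v_1)+w(t)$ and abbreviating $z(t)=S(t)\Pi^0(v_0,v_1)$, the computation already carried out in the proof of Proposition~\ref{th.1} gives the differential inequality
$$
\tfrac{d}{dt}\mathcal{E}(w)\le C\,\mathcal{E}(w)^{1/2}\big(g(t)+f(t)\,\mathcal{E}(w)^{1/2}\big),\qquad g=\|z\|_{L^6(\T^3)}^3,\quad f=\|z\|_{L^\infty(\T^3)}.
$$
(The constant Fourier mode, removed from the reference evolution by $\Pi^0$, is carried by $w$ and is controlled by the quartic term $\int w^4$ in $\mathcal{E}(w)$, so it is harmless.) Feeding this into Gronwall's lemma only yields the bound $e^{C\int_0^t f}$, which is \emph{exponential} in $t$ because $\int_0^t f\sim M\,t$; the entire content of the proposition is to replace this by polynomial growth, and converting the exponential into a polynomial is the main obstacle.

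To this end I would, following Bourgain~\cite{Bo} and its reverse form due to Gallagher--Planchon~\cite{GaPl}, introduce a frequency threshold $N$ and split $\Pi^0(v_0,v_1)=\Pi_N(v_0,v_1)+\Pi^N(v_0,v_1)$. The low-frequency part lies in $\mathcal{H}^1$ with $\|\Pi_N(v_0,v_1)\|_{\mathcal{H}^1}\lesssim N^{1-s}\|(v_0,v_1)\|_{\mathcal{H}^s}$, and I evolve it by the full nonlinear flow: since the equation is defocusing, energy conservation keeps its $\mathcal{H}^1$ norm bounded by $\lesssim N^{1-s}M$ \emph{uniformly in time}. The high-frequency part is then treated perturbatively, its free evolution being small in the relevant space--time norms, the smallness coming both from the localisation $\Pi^N$ and, crucially, from the almost sure improved integrability of $z$ furnished by Corollary~\ref{random2} (namely $z\in L^p_{\mathrm{loc}}(\R_t;W^{s,p})$ with sub-Gaussian tails). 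Inserting this splitting into the energy estimate, the resonant interaction responsible for the factor $f\,\mathcal{E}(w)$ is now paired either against the bounded low-frequency energy or against a negative power of $N$, so that the energy increment over one unit time interval becomes sublinear in $N$.

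Iterating over the $\sim T$ unit intervals of $[0,T]$ and summing the increments leads to an estimate of the schematic form $\mathcal{E}(w(T))\lesssim N^{2(1-s)}M^2+T\,N^{-\kappa}(\cdots)$; optimising the cutoff $N$ as a function of $T$ (the balance forces $N\sim T^{1/s}$) reproduces exactly $\|(w(T),\partial_t w(T))\|_{\mathcal{H}^1}\lesssim (M+T)^{(1-s)/s}$, the extra $\varepsilon$ absorbing the endpoint losses, the logarithmic factors coming from Bernstein's inequality, and the time weight $\langle t\rangle^{-b}$ hidden in the definition of $M$. Finally, I take $M$ to be the weighted space--time norm $\|\langle t\rangle^{-b}\,z\|_{L^p(\R_t;W^{s,p})}$ of~\eqref{eq.borne} (with $b>1+\tfrac1p$): because the randomisation is sub-Gaussian, the random-series estimates of the appendix, invoked through Corollary~\ref{random2}, give a Gaussian-type tail for this norm, whence $\mu(M>\lambda)\le Ce^{-\lambda^\delta}$ for a suitable $\delta>0$. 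The delicate step remains the second paragraph, where the a priori exponential Gronwall growth is turned into a polynomial one; this is possible only because the low-frequency nonlinear dynamics conserves energy while the high-frequency interaction is rendered small by the combined effect of the high--low splitting and the probabilistic integrability of the free evolution.
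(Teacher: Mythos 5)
Your proposal identifies the right obstacle (the exponential Gronwall factor) and the right circle of ideas (high--low decomposition), but the splitting you choose is the \emph{forward} Bourgain method --- evolve the low frequencies $\Pi_N(v_0,v_1)$ by the nonlinear flow, use energy conservation to keep that piece bounded, and treat the high frequencies perturbatively --- and this version has a genuine gap on $\T^3$. The error $e$ between the true solution and your two reference evolutions satisfies an equation whose linear-in-$e$ part is $3(u_{\mathrm{low}}+S(t)\Pi^N)^2e$, and the only way the energy method controls the term $\int u_{\mathrm{low}}^2\, e\, \partial_t e$ is through $\|u_{\mathrm{low}}\|_{L^6}^2\|e\|_{L^6}\|\partial_t e\|_{L^2}\lesssim E(u_{\mathrm{low}})\,\mathcal{E}(e)$. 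Since $E(u_{\mathrm{low}})\sim N^{2(1-s)}M^2$ is conserved but \emph{large}, Gronwall produces the factor $\exp\bigl(C N^{2(1-s)}M^2 t\bigr)$, which destroys the argument already for $t\gtrsim N^{-2(1-s)}$. Your schematic bound $\mathcal{E}(w(T))\lesssim N^{2(1-s)}M^2+T N^{-\kappa}(\cdots)$ silently assumes the increments over unit intervals accumulate \emph{additively}; in fact they accumulate multiplicatively, and the per-interval amplification constant depends on the low-frequency evolution. In the deterministic works you cite (Kenig--Ponce--Vega, Gallagher--Planchon, Bahouri--Chemin) this multiplicative accumulation is tamed by global-in-time Strichartz/$L^4_{t,x}$ control of the nonlinear energy-class flow, which on $\R^3$ comes from scattering-type (Morawetz) estimates; no such global dispersive control exists on the compact torus, so the forward method gives only exponential-in-time bounds there. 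This is precisely why the statement is nontrivial, and "boundedness of the low-frequency energy" cannot substitute for smallness or time-integrability in the Gronwall exponent.

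The paper's proof uses the \emph{reverse} splitting, which circumvents this entirely: the low frequencies are never evolved as a separate nonlinear solution. Instead one writes $v(t)=S(t)\Pi^N(v_0,v_1)+w_N(t)$, where the nonlinear piece $w_N$ has \emph{initial data} $\Pi_N(v_0,v_1)$ (energy $\mathcal{E}^{1/2}(w_N(0))\lesssim N^{1-s+\varepsilon}$ on the probabilistic set $F_N\cap G_N$) and only the \emph{high} frequencies evolve linearly. The same energy inequality then has $f_N(t)=\|S(t)\Pi^N(v_0,v_1)\|_{L^\infty}$ and $g_N(t)=\|S(t)\Pi^N(v_0,v_1)\|_{L^6}^3$ in place of $f,g$, i.e.\ the Gronwall exponent involves only the high-frequency free evolution, which the large deviation estimates (the sets $H_N, K_N$, with weighted-in-time norms $\lesssim N^{\varepsilon-s}$) make $O(1)$ on the long interval $t\lesssim N^{s-2\varepsilon}$. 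A \emph{single} application of Gronwall then gives $\mathcal{E}^{1/2}(w_N(t))\lesssim N^{1-s+\varepsilon}$ up to time $t\sim N^{s-2\varepsilon}$, and choosing $N$ as this function of $t$ yields $(M+|t|)^{\frac{1-s}{s}+\varepsilon}$; the parameter $M$ and the tail bound $\mu(M>\lambda)\leq Ce^{-\lambda^\delta}$ come from intersecting the sets $E_N$ dyadically. If you want to salvage your write-up, the fix is not technical bookkeeping but replacing your decomposition by this one, so that the dangerous quadratic interaction is paired with the probabilistically small linear high-frequency evolution rather than with the large nonlinear low-frequency one.
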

%%%%%%%%%%%%%%%%
%%%%%%%%%%%%%%%%%%%%%%%%%%%%%%%%%%%%%%%%%%%%%%%%%%%%%%%%
%%%%%%%%%%%%%%%%%%%%%%%%%%%%%%%%%%%%%%%%%%%%%%%%%%%%%%%%
\begin{proof}
We only give the proof for positive times, the analysis for negative times being analogous.
For $\varepsilon>0$, $\delta >1/2$ and $\widetilde{\delta}>1/3$, we introduce the sets
 \begin{equation*}
 \begin{aligned}
 F_N&= \Big( (v_0, v_1)\in \Sigma\,:\, \|\Pi_N(v_{0}, v_{1})\|_{\mathcal{H}^1}\leq N^{1-s + \varepsilon}\Big),
 \\
  G_N&= \Big( (v_0, v_1)\in \Sigma\,:\, \|\Pi_N(v_{0})\|_{ L^4(\T^3)}\leq N^{\varepsilon}\Big),
  \\
  H_N&= \Big( (v_0, v_1)\in \Sigma\,:\, \|\langle t\rangle^{- \delta} S(t)(\Pi^N(v_{0}, v_{1}))\|_{L^2(\R_t ; L^\infty(\T^3))} \leq {N^{\varepsilon-s}}\Big),
  \\
 K_N&= \Big( (v_0, v_1)\in \Sigma \,:\,
  \|\langle t\rangle^{- \widetilde{\delta}} S(t)(\Pi^N(v_{0}, v_{1}))\|_{L^3(\R_t ; L^6(\T^3))} \leq  N^{\varepsilon-s}\Big).
 \end{aligned}
 \end{equation*}
 %%%%%%%%%%%
 \begin{lemma}\label{eq.estim}
 Let $\delta>1$ and $\widetilde{\delta}>1/3$,
 There exists $\varepsilon_0>0$ such that for any $0<\varepsilon\leq \varepsilon_0$, there exists $C,c>0$ such that for every $N\geq 1$,
 \begin{equation*}
 \mu( F_N^c) \leq C e^{-cN^{2\varepsilon}}, \quad \mu( G_N^c) \leq C e^{-c N^{2\varepsilon}},\quad 
 \mu( H_N^c) \leq C e^{-c N^{\varepsilon}}, \quad \mu (K_N^c)\leq Ce^{-c N^{2\varepsilon}}\,.
 \end{equation*}
 \end{lemma}
 %%%%%%%%%%%%%%%%%%
 \begin{proof}
 For 
 $(u_0, u_1) \in \mathcal{H}^s( \T^3)$, we have 
 $$\|\Pi_N(u_0, u_1)\|_{\mathcal{H}^1( \T^3)} \leq CN^{1-s} \|(u_0, u_1)\| _{\mathcal{H}^s}\,. $$
Therefore according to~\eqref{eq.random2} 
$$
\mu( F_N^c) \leq C e^{-cN^{2\varepsilon}}\,.
$$
Next, using \eqref{eq.random1}, we infer that 
 $$
 \mu( G_N^c) \leq C e^{-c N^{2\varepsilon}}.
 $$
On the other hand we have 
 $$ \|\Pi^N(u_0, u_1)\|_{\mathcal{H}^0( \T^3)}\leq CN^{-s} \|(u_0, u_1)\|_{\mathcal{H}^s(\T^3)}. $$
Therefore using Remark~\ref{rem3} and Corollary~\ref{corA.6}, we obtain that
$$
 \mu( K_N^c) \leq C e^{-c N^{2\varepsilon}}, \qquad \mu( H_N^c) \leq C e^{-cN^{2\varepsilon}}.
$$
 This completes the proof of Lemma~\ref{eq.estim}.
\end{proof}
 %%%%%%%%%%%
 Let us now define for $N\geq 1$ an integer, 
 $$
 E_N\equiv F_{N} \cap G_{N}\cap H_N\cap K_{N}\,.
 $$
 According to Lemma~\ref{eq.estim}, we have 
 $$ 
 \mu( E_N^c) \leq C e^{-c N^\kappa}, \qquad\kappa >0.
 $$
 Fix $\varepsilon_1>0$. Then we fix $\varepsilon>0$ small enough such that
\begin{equation}\label{eps1eps0}
\frac{1-s+ \varepsilon} {s- 2\varepsilon}\leq
\frac{1-s} {s}+\varepsilon_1, \qquad \varepsilon < \frac s 2. 
\end{equation}
Let us finally fix $\delta>1/2, \widetilde \delta > 1/3 $ such that
\begin{equation}\label{obrat}
 (\delta-\frac 1 2)s<2\delta\varepsilon, \qquad \widetilde \delta <1
 \end{equation}
We have the following statement.
 %%%%%%%%%
 \begin{lemma}\label{iinntt}
For every $c>0$ there exists $C>0$ such that for every
$t\geq 1$, every integer $N\geq 1$ such that $t\leq cN^{s-2\varepsilon}$, every
$(v_0, v_1)\in E_N$ the solution of \eqref{shart} with data $(v_0,v_1)$ satisfies
$$
\|v(t)-S(t)\Pi^0(v_{0}, v_{1})\|_{\mathcal{H}^1( \T^3)}\leq CN^{1-s+\varepsilon}.
$$
In particular, thanks to \eqref{eps1eps0}, if $t\approx N^{s-2\varepsilon}$ then
$$
\|v(t)-S(t)\Pi^0(v_{0}, v_{1})\|_{\mathcal{H}^1( \T^3)}\lesssim t^{\frac{1-s} {s}+\varepsilon_1}.
$$
\end{lemma}
%%%%%%%%%%%%%
\begin{proof}
For $(v_0, v_1)\in E_N$ and we decompose the solution of \eqref{shart} with data $(v_0,v_1)$  as
$$
v(t)=S(t) \Pi^N(v_{0}, v_{1})+ w_{N},
$$
where $w_N$ solves the problem
\begin{equation}\label{problem_v}
(\partial_t^2-\Delta_{\T^3})w_N+(w_N+S(t) \Pi^N(v_{0}, v_{1}))^3=0,
\quad (w(0),\partial_t w(0))=\Pi_{N}(v_0,v_1).
\end{equation}
Using the energy estimates applied in the previous sections, we get the bound
\begin{equation}\label{eq.borne-N}
\frac{d} {dt} {\mathcal E}(w_N(t)) \leq 
C \big({\mathcal E}(w_N(t))\big)^{1/2} 
 \Big(g_N(t) + f_N(t) \big({\mathcal E}(w_N(t))\big)^{1/2}  \Big),
\end{equation}
where
$$
g_N(t)= \|S(t)\Pi^N(v_0, v_1)\|^3_{L^6( \T^3)},\quad f_N(t) =  \|S(t)\Pi^N(v_0, v_1)\|_{L^\infty( \T^3)}.
$$
Integrating \eqref{eq.borne-N}, we get
\begin{equation}\label{iinn}
{\mathcal E}^{1/2}(w_{N}(t)) \leq C e^{\int_0^t f_N(\tau)d\tau} 
\Big({\mathcal E}^{1/2}(w_{N}(0))+ \int_0^t g_N(\tau) d\tau\Big). 
\end{equation}
We now observe that for $(v_0,v_1)\in E_N$
\begin{equation*}
 \Big|\int_0^t g_N(\tau)d\tau \Big|\leq C N^{3(-s+ \varepsilon)}\langle t\rangle ^{3 \widetilde{\delta}}
 \leq 
 CN^{3(-s+ \varepsilon)+3\widetilde{\delta}(s-2\varepsilon)}\leq C,
 \end{equation*}
 provided
 $$
 -s+\varepsilon+\widetilde{\delta}(s-2\varepsilon)\leq 0.
 $$
 The last condition can be readily satisfied according to \eqref{obrat}.
 \par
 Next, we have (using Cauchy-Schwartz inequality in time) that for $(v_0,v_1) \in E_N$,
 \begin{equation*}
 \Big |\int_0^t f_N(\tau)d\tau \Big|\leq \|\langle \tau \rangle ^{- \delta} f_N\|_{L^2( \mathbb{R})} \langle t \rangle ^{\delta+ \frac 1 2}\leq CN^{-s + \varepsilon} \langle t\rangle^{\delta +\frac 1 2}
 \leq C N^{-s + \varepsilon+(\delta+ \frac 1 2 )(s-2\varepsilon)}\leq C,
 \end{equation*}
 provided $-s + \varepsilon+(\delta+ \frac 1 2) (s-2\varepsilon)\leq 0$, a condition which is satisfied thanks to \eqref{obrat}.
 \par
 
For $(v_0, v_1) \in E_N$, we have  
 $$ {\mathcal E}^{1/2} (w_{N}(0))\leq C( \|\Pi_N( v_{0}, v_{1})\|_{{\mathcal H}^1}+\|\Pi_{N}(v_0)\|_{L^4}^2)  \leq CN^{1-s+ \varepsilon}
 $$
 and coming back to \eqref{iinn}, we get 
 \begin{equation*}
{\mathcal E}^{1/2}(w_{N}(t)) \leq C N^{1-s+ \varepsilon}.
\end{equation*}
Recall that
$$ 
v(t)= w_{N}(t) + S(t)\Pi^N(v_{0}, v_{1})=S(t)\Pi^0(v_{0}, v_{1})+w_{N}(t)-S(t)\Pi_N\Pi^0(v_{0}, v_{1}).
$$
We have that for a solution to the linear wave equation  the linear energy 
$$ \|\nabla_x u \|_{L^2( \T^3)}^2 + \|\partial_t u \|_{L^2( \T^3)}^2
$$ is independent of time and that if $(u, \partial_tu) $ is orthogonal to constants ($(u, \partial_t u) =  \Pi^0 (u, \partial_t u)$), then this energy controls the $\mathcal{H}^1( \T^3)$-norm, we deduce for $(v_0,v_1)\in E_{N}\subset F_N$ that
$$
\|S(t)\Pi_{N}\Pi^0(v_{0}, v_{1})\|_{\mathcal{H}^1( \T^3)}\leq CN^{1-s+\varepsilon}
$$
and therefore
$$
\|v(t)-S(t)\Pi^0(v_{0}, v_{1})\|_{\mathcal{H}^1( \T^3)}\leq CN^{1-s+\varepsilon}\,.
$$
This completes the proof of Lemma~\ref{iinntt}.
\end{proof}
Next we set
$$
E^{N}=\bigcap_{M\geq N} E_{M},
$$ 
where the intersection is taken over the dyadic values of $M$, i.e. $M=2^j$ with $j$ an integer.
Thus $\mu(E^N)$ tends to $1$ as $N$ tends to infinity.
Using Lemma~\ref{iinntt}, we obtain that  there exists $C>0$ such that for every $t\geq 1$, every $N$, every $(v_0,v_1)\in E^N$,
$$
\|v(t)-S(t)\Pi^0(v_{0}, v_{1})\|_{\mathcal{H}^1 ( \T^3)}\leq C\big(N^{1-s+\varepsilon}+t^{\frac{1-s} {s}+\varepsilon_1}\big)\,.
$$
Finally, we set 
 $$
 E= \bigcup_{N= 1}^{\infty} E^N\,.
 $$
 We have thus shown the $\mu$ almost sure bounds on the possible growths of the Sobolev norms of the solutions established in the previous section
 for data in $E$ which is of full $\mu$ measure.  This completes the proof of Proposition~\ref{th.3}.
\end{proof}

\section{Global existence, the case $s=0$}
In the previous sections, to prove global existence for $s>0$ we used at some points the Sobolev embeddings $W^{\varepsilon, p} \subset L^\infty$ for $\varepsilon >0$ and $p$ large enough. This argument implied an $\varepsilon$ loss in the estimates. Here, such a loss is forbidden and we are going to follow instead a probabilistic version of the strategy iniciated by Yudovich~\cite{Yu} to prove global existence for two dimensional Euler  equation 
(see also Brezis-Gallouet~\cite{BrGa} and Burq-G\'erard-Tzvetkov~\cite{BGT1} for similar ideas in the context of dispersive equations). 
Our goal in this section is to prove the following statement.
%%%%%%% 
\begin{proposition}\label{th.4}
Let $\mu\in {\mathcal M}^0$.
Then there exists $C>0$ such that for $\mu$ almost every $(v_0, v_1) \in L^2( \T^3) \times H^{-1} ( \T^3)$, there exists a unique global solution 
$$v(t) \in S(t)\Pi^0(v_0, v_1) + C(\R; H^1(\T^3) \times L^2(\T^3))$$ of the non linear wave equation~~\eqref{shart}.
Moreover there exists $M=M(v_0,v_1)>0$ such that the solution furthermore satisfies 
$$  v(t)= S(t) \Pi^0(v_0, v_1)+ w(t), \qquad 
\|(w(t), \partial_t w(t)) \|_{\mathcal{H}^1(\T^3)}\leq C e^{C(t+M)^2} ,
$$ 
with 
$ \mu (M>\lambda) \leq C e^{-\lambda^\delta}$ for some $\delta>0$.
\end{proposition}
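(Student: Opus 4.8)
The plan is to rerun the energy estimate of Proposition~\ref{th.1.bis}, but to replace, at the single point where it is used, the almost sure $L^\infty$ bound on the free evolution by a logarithmic (Brezis--Gallouet/Yudovich) substitute. As before I write $v=S(t)\Pi^0(v_0,v_1)+w$, set $F(t)=S(t)\Pi^0(v_0,v_1)$ (so that $F$ has zero spatial mean and the constant Fourier mode of $(v_0,v_1)$ is carried by $w$), and note that $w$ solves $(\partial_t^2-\Delta_{\T^3})w+(F+w)^3=0$ with $\mathcal H^1$ data. Estimating $\frac{d}{dt}\mathcal E(w)=-\int_{\T^3}\partial_t w\,(3F^2w+3Fw^2+F^3)\,dx$ term by term, using $\|w\|_{L^4}^2\lesssim\mathcal E(w)^{1/2}$ (from the defocusing quartic term in $\mathcal E$) and $\|w\|_{L^6}^2\lesssim\mathcal E(w)$, gives
\[
\frac{d}{dt}\mathcal E(w)\le C\,\mathcal E(w)^{1/2}\|F\|_{L^6(\T^3)}^3+C\,\|F\|_{L^6(\T^3)}^2\,\mathcal E(w)+C\,\|F\|_{L^\infty(\T^3)}\,\mathcal E(w).
\]
At $s>0$ the last coefficient was handled via $W^{\varepsilon,p}\subset L^\infty$; at $s=0$ the norm $\|F\|_{L^\infty}$ is no longer $\mu$-almost surely in $L^1_{\mathrm{loc}}$, and it is exactly here, and only here, that the extra idea is needed.

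To control the offending term $\int_{\T^3}\partial_t w\,F\,w^2\,dx$ I split $F=\Pi_N F+\Pi^N F$ at a frequency cut-off $N$ to be chosen. The high frequencies are estimated in $L^6$, contributing $C\|\Pi^N F\|_{L^6}\,\mathcal E(w)^{3/2}$, while the low frequencies contribute $C\|\Pi_N F\|_{L^\infty}\,\mathcal E(w)$. For the low part I use the inverse Bernstein inequality $\|\Pi_N F\|_{L^\infty}\lesssim N^{3/q}\|F\|_{L^q}$ and optimize at $q\sim\log N$; since the random series bounds \eqref{eq.random1}--\eqref{eq.random2} and Corollary~\ref{random2} produce $\|F\|_{L^q}$ with a constant growing like $\sqrt q$ at regularity $s=0$, this yields $\|\Pi_N F\|_{L^\infty}\lesssim c(t)\sqrt{\log N}$ with $c(t)$ a $\mu$-almost surely controlled (time-weighted) quantity. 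Choosing $N=N(t)$ as the smallest integer with $\|\Pi^N F(t)\|_{L^6}\le\mathcal E(w(t))^{-1/2}$ absorbs the high-frequency contribution into $C\mathcal E(w)$ and forces $\log N\lesssim \log(2+\mathcal E(w))$ up to controlled factors, so that altogether
\[
\frac{d}{dt}\mathcal E(w)\le C\,c(t)\,\mathcal E(w)\sqrt{\log(2+\mathcal E(w))}+C\big(1+\|F\|_{L^6}^2\big)\mathcal E(w)+C\,\mathcal E(w)^{1/2}\|F\|_{L^6}^3.
\]
Writing $G(t)=\log(2+\mathcal E(w(t)))$ turns the leading nonlinearity into $\frac{d}{dt}\sqrt G\lesssim c(t)$, and integrating gives $\sqrt{G(t)}\lesssim M+t$, i.e. $\|(w(t),\partial_t w(t))\|_{\mathcal H^1}\lesssim\mathcal E(w(t))^{1/2}\le e^{C(t+M)^2}$, which is the claimed bound.

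The probabilistic ingredients are then packaged as in the previous sections: the time-weighted $L^p_t L^q_x$ norms of $F=S(t)\Pi^0(v_0,v_1)$, together with the quantitative rate at which $\|\Pi^N F\|_{L^6}$ decays in $N$, are all finite $\mu$-almost surely off an event of measure $\le Ce^{-\lambda^\delta}$ (by Corollary~\ref{random2} and the estimates of the appendix, with the $\langle t\rangle^{-\delta}$ weight making the time integrals converge and producing $\int_0^t c\lesssim\langle t\rangle$ after Cauchy--Schwarz). Taking $M=M(v_0,v_1)$ to be the smallest threshold for which all these bounds hold gives $\mu(M>\lambda)\le Ce^{-\lambda^\delta}$. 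Finally, the a priori bound shows $\mathcal E(w(t))$ stays finite for all $t$, so the local solution furnished by Proposition~\ref{prop.local} never leaves $H^1\times L^2$ and extends globally, and uniqueness in the class $S(t)\Pi^0(v_0,v_1)+C(\R;\mathcal H^1)$ follows by propagating the local uniqueness exactly as in Proposition~\ref{th.1.bis}.

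The main obstacle is the self-consistent Yudovich choice of the cut-off $N=N(t,\mathcal E(w(t)))$: one must control the tail $\|\Pi^N F\|_{L^6}$ in $N$ quantitatively and uniformly enough in $t$ (against the weight $\langle t\rangle^{-\delta}$) to integrate the differential inequality, and one must track the exact $\sqrt q$ law of the random-series $L^q$ bounds so that the low-frequency loss is genuinely $\sqrt{\log N}$ rather than a power of $N$; it is this that produces the double-time-exponential growth $e^{C(t+M)^2}$ and the stretched-exponential tail for $M$, rather than something worse.
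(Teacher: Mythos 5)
Your overall strategy (a probabilistic Yudovich argument exploiting the $\sqrt q$ growth of the $L^q$ bounds for the random free evolution) is the right one, and it is indeed the one the paper uses. However, there is a genuine gap at the heart of your implementation: the self-consistent frequency cutoff. You split $F=\Pi_N F+\Pi^N F$, estimate the low part by Bernstein with a $\sqrt{\log N}$ loss, and then choose $N(t)$ as the smallest integer with $\|\Pi^N F(t)\|_{L^6}\leq {\mathcal E}(w(t))^{-1/2}$, claiming this ``forces $\log N\lesssim \log(2+{\mathcal E}(w))$ up to controlled factors''. That claim requires a \emph{quantitative} (say polynomial) decay rate of $\|\Pi^N F\|_{L^6}$ in $N$, and at $s=0$ no such rate exists: for $\mu\in{\mathcal M}^0$ generated by a generic $(u_0,u_1)\in{\mathcal H}^0$ which lies in no ${\mathcal H}^{s'}$, $s'>0$, the appendix estimates (Remark~\ref{rem3}) only control the tail at the scale $\|\Pi^N(u_0,u_1)\|_{{\mathcal H}^0}$, a quantity that tends to $0$ arbitrarily slowly. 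If, say, the tail decays like $(\log\log N)^{-1}$, your implicit choice forces $\log N\gtrsim e^{c\sqrt{{\mathcal E}}}$, the Bernstein loss becomes $e^{c\sqrt{{\mathcal E}}/2}$ rather than $\sqrt{\log {\mathcal E}}$, and the resulting differential inequality $\frac{d}{dt}{\mathcal E}\lesssim {\mathcal E}\,e^{c\sqrt{{\mathcal E}}/2}$ blows up in finite time. This is exactly why the sets $H_N$, $K_N$ of Section~4, which encode polynomial-in-$N$ tail decay, are only available for $s>0$; you flagged this as ``the main obstacle'' but did not overcome it, and within a frequency-splitting framework it cannot be overcome at $s=0$.

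The paper's proof avoids frequency truncation of $F$ altogether. In the energy estimate the dangerous term is handled by H\"older as $\|F\|_{L^j(\T^3)}\|w^2\|_{L^{\tilde j}(\T^3)}$ with $\frac1j+\frac1{\tilde j}=\frac12$, followed by the interpolation $\|w\|_{L^{2\tilde j}}\leq\|w\|_{L^4}^{1-\theta}\|w\|_{L^6}^{\theta}$, $\theta=\frac6j$, so the superlinear excess is only ${\mathcal E}^{3/j}$. Under the bootstrap hypothesis ${\mathcal E}^{1/2}\leq 2^j$ this excess is bounded by $(2^j)^{6/j}=64$, so Gronwall applies with the coefficient $\|F\|_{L^j}$ alone. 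The probabilistic input is then a hierarchy of events $E_{j,k}$, on which the weighted space-time $L^j$ norm of $F$ is $\lesssim k\sqrt j$ (probability of failure $\lesssim e^{-cj^{\delta}}$); on $E_{j,k}$ the bootstrap closes for times $t\leq\alpha\sqrt j$, and intersecting over dyadic $j$ and letting the time horizon grow with $j$ yields global existence together with the bound ${\mathcal E}^{1/2}(w(t))\leq Ce^{c(1+t^2)}$ and the stretched-exponential tail for $M$. In short: where you let the cutoff $N$ track ${\mathcal E}$ (which demands unavailable tail decay), the paper lets the integrability exponent $j$ track the time horizon, which only demands the $\sqrt j$ law you correctly identified.
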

%%%%%%%%
\begin{proof}
 Again, we shall only consider positive times.
 Let us  notice that according to  Proposition~\ref{random1} (with $p_1= p_2= j, \delta_j = \frac 2 j$), 
 there exists $C>0$ such that for any $j\geq 2$, we have 
\begin{equation*}
  \mu\big( (v_0, v_1)\in {\mathcal H}^0\,:\, 
  \|  
  \langle t \rangle ^{-\delta_j}\Pi^0
  S(t)  ( v_0, v_1) \|_{L^j(\R_t \times \T^3)} >\lambda \big)
 \leq \Bigl(C\frac{ \sqrt{j} (\delta_j j -1)^{-\frac {1} {j}}}
 {\lambda }\Bigr) ^{j}  = \Bigl(\frac{ C \sqrt{j}} \lambda\Big) ^{j}.
\end{equation*}
On the other hand, according to Corollary~\ref{random3} (with $p_1=3, p_2=6, \delta =1>\frac 1 3 $), 
\begin{equation*}
 \mu \big( 
 (v_0, v_1)\in {\mathcal H}^0\,:\, 
  \|\langle t \rangle^{-1} \Pi^0 S(t)  ( v_0, v_1) \|_{L^3(\R_t ; L^6(\T^3))} >\lambda \big) \leq C e^{-c  \lambda^2}\,.
 \end{equation*}
For any integer $k\geq 1$, we set (notice that for the global existence part in the case $s=0$ in Theorem~\ref{main}, we only use the case $k=1$, the other cases will only be used to prove the invariance of our set by the flow),
\begin{equation*}
F_{j,k}= 
\Big((v_0, v_1)\in {\mathcal H}^0\,:\,  \|  \langle t\rangle^{-\delta_j}\Pi^0S(t) ( v_0, v_1) \|_{L^j(\R_{t} \times \T^3)} \leq 2kC\sqrt{j}\Big)
\end{equation*}
and
\begin{multline*}
G_{j,k}= 
\Big((v_0, v_1)\in {\mathcal H}^0\,:\, 
 \\
 \|
 \langle t\rangle^{-1}
 \Pi^0  S(t) ( v_0, v_1) \|^3_{L^3(\R_{t} ; L^6(T^3))} 
 +
 \|\Pi_{0}(v_0,v_1)\|_{{\mathcal H}^1}+\|\Pi_{0}v_0\|_{L^4(\T^3)}^2
 \leq kj
\Big).
\end{multline*}
Therefore we have 
\begin{equation}\label{ejk}
\mu(F_{j,k}^c)+\mu(G_{j,k}^c)\lesssim e^{-cj^\delta}\,,\quad \delta>0.
\end{equation}
Next we set
$
E_{j,k}= G_{j,k} \cap F_{j,k}. 
$
For any $(v_0, v_1) \in E_{j,k}$, we write
 $$
 v(t)= S(t)\Pi^0 (v_0, v_1)+w(t)\,,\quad (w(0),\partial_t w(0))=\Pi_{0}(v_0,v_1).
 $$ 
Using the energy estimate, already performed several times in the previous sections, we get
\begin{multline*}
\frac{d} {dt} {\mathcal E}(w(t)) \leq C ({\mathcal E}(w(t)))^{1/2}  \|w^3-  (\Pi^0S(t)(v_0, v_1)+ w)^3\|_{L^2( \T^3)}
\\
\leq C ({\mathcal E}(w(t)))^{1/2} \Bigl(\|\Pi^0S(t)(v_0, v_1)\|^3_{L^6( \T^3)} + 
\|\Pi^0S(t)(v_0, v_1)\|_{L^j( \T^3)} \|w^2\|_{L^{\widetilde{j}}( \T^3)} \Bigr), 
\end{multline*}
provided
$$
\frac 1 j + \frac 1 {\widetilde{j}}= \frac 1 2\,.
$$
On the other hand
$$ \|v\|_{L^{2\widetilde{j}}}\leq \|v\|_{L^4}^{1-\theta} \|v\|_{L^6}^{\theta}, \qquad \frac{1-\theta} 4 + \frac {\theta} {6} = \frac 1 {2\widetilde {j}}\Rightarrow \theta = \frac 6 j $$
and thus we obtain
\begin{multline*}
\frac{d} {dt} {\mathcal E}(w(t)) 
\leq 
C ({\mathcal E}(w(t)))^{1/2}
\\
\times
 \Bigl(\|\Pi^0S(t)(v_0, v_1)\|^3_{L^6( \T^3)} + 
\|\Pi^0S(t)(v_0, v_1)\|_{L^j( \T^3)} ({\mathcal E}(w(t))^{\frac 1 2 (1+\frac 6 j)}\Bigr).
\end{multline*}
Consequently, as long as $({\mathcal E}(w(t)))^{1/2}$ remains smaller than $2^j$, we have $\big({\mathcal E}(w(t))\big)^{\frac 3 j} \leq C$ 
and consequently by Gronwall inequality,
for $(v_0,v_1)\in E_{j,k}$ and $t\lesssim \sqrt{j}$,
\begin{multline*}
 ({\mathcal E}(w(t)))^{1/2} \leq
 \\  
 Ce^{\int_0^t \|\Pi^0S(\tau)(v_0, v_1)\|_{L^j( \T^3)} d\tau }
\Big(   \int_0^t \|\Pi^0S(\tau)(v_0, v_1)\|^3_{L^6( \T^3)} d\tau
+
({\mathcal E}(w(0)))^{1/2} 
\Big)
\\
\leq 
Cj^{\frac 3 2} e^{c\|(1-\Pi_{0})S(t)(v_0, v_1)\|_{L^j(0,t)\times \T^3)}\times \langle t \rangle^{1- \frac 1 j } }\,.
\end{multline*}
Next, we can write for $t\lesssim \sqrt{j}$ and $(v_0,v_1)\in E_{j,k}$
$$
\|(1-\Pi_{0})S(t)(v_0, v_1)\|_{L^j(0,t)\times \T^3)}
\leq
C
j^{\frac{1}{2\delta_j}}
\|\langle t\rangle^{-\delta_j}(1-\Pi_{0})S(t)(v_0, v_1)\|_{L^j(\R\times \T^3)}
$$
and consequently there exists a small $\alpha>0$ and a large $j_0$ depending only on $k$ such that for $j\geq j_0$,
$t\leq \alpha \sqrt{j}$ and $(v_0,v_1)\in E_{j,k}$, as long as $({\mathcal E}(w(t)))^{1/2}$ remains bounded by $2^j$ and $t\leq \alpha \sqrt{j}$, we have 
$$ 
({\mathcal E}(w(t)))^{1/2} \leq 
Cj^{\frac 3 2} e^{2C \sqrt{j}\times |t|}\leq Cj e^{2C \alpha j}  \leq 2^j\,.
$$
From the usual bootstrap argument, we deduce that the bootstrap assumption $({\mathcal E}(w(t)))^{1/2}  \leq 2^j$ remains satisfied for $t \leq \alpha \sqrt{j}$.
Thus, we obtain that
$$ \forall k \geq 1, \exists \alpha>0, \exists j_0>0; \forall\, j\geq j_0,\,\,\forall\, (v_0, v_1) \in E_{j,k},\quad
({\mathcal E}(w(t)))^{1/2} \leq  2^j, \qquad t \leq \alpha \sqrt{j}.
$$
Next, we set
$$
E^j_k= \bigcap_{N\geq \max(j,j_0)} E_{N,k}, 
$$
where the intersection is taken over the values of $N\geq \max(j,j_0)$.
Thus, according to~\eqref{ejk}, 
$$
\mu( (E^j_k)^c) \leq \sum_{n\geq \max(j,j_0)} Ce^{-c n^\delta}\leq C e^{-j^\delta} 
$$ 
and consequently, the set $\Sigma_k$, defined as
$$
\Sigma_k=\bigcup_{j=1}^{\infty}E^j_{k}
$$
is of full $\mu$ measure.

Let $(v_0, v_1) \in E^j_k$.
The solution is already defined up to time $\alpha\max (\sqrt{j_0},\sqrt{j})$.
For $t \geq \alpha \max(\sqrt{j_0},\sqrt{j})$, there exists a dyadic $N\geq \max(j_0,j)$ such that 
$\alpha \sqrt{N} \leq t \leq \alpha \sqrt{2N}$ and we deduce that the solution is defined up to time $\alpha \sqrt{N2N}$ with bounds 
$$
({\mathcal E}(w(t)))^{1/2}  \leq 2^{2N} \leq Ce^{c(1+ t^2)}.
$$
Hence, the solution is globally defined.
The above discussion also gives the claimed bound on the possible growth of the Sobolev norms.
Namely, we have that for every $t\geq 1$, every $(v_0, v_1) \in E^j_k$,
$$
({\mathcal E}(w(t)))^{1/2}  \leq Ce^{c(j+j_0+ t^2)}\,.
$$
Thus we have the needed bound for every $(v_0,v_1)\in\Sigma_k$.
This ends the proof of Proposition~\ref{th.4}.
\end{proof}
Let us now define a set of full measure invariant under the dynamics established in Proposition~\ref{th.4}.
Set
$$
\Sigma=\bigcap_{k=1}^{\infty}\Sigma_{k}.
$$
Then $\Sigma$ is of full measure.
Coming back to the definition of $\Sigma_k$, we obtain that for every $t$ there exists $k_0$ such that for every $k$,
$
S(t)(\Sigma_k)\subset \Sigma_{k+k_0}
$
which in turn implies that 
$
S(t)(\Sigma)=\Sigma.
$
It remains to observe that the argument of the proof of Proposition~\ref{th.4} implies that the set
$
\Sigma+{\mathcal H}^1
$
is invariant under the dynamics (and of full measure).
This completes the proof of Theorem~\ref{main}.
%%%%%%%%%%%%%%%%%%%%%%%%%%%%%%%%%%%%%%%%%%%%%%%%%%%%%%%%%
\section{Probabilistic continuity of the flow.}
The purpose of this section is to prove Theorem~\ref{th_continuity}.
According to Proposition~\ref{ochak}, we have that for any $2\leq p_1<+\infty, 2\leq p_2 \leq + \infty, \delta > 1+ \frac 1 {p_1}$,  $\eta\in(0,1)$, $\alpha\in (0,1)$ and $\beta>0$,
\begin{multline*}
\mu\otimes\mu\Big(
(V_0,V_1)\in {\mathcal H}^s\times {\mathcal H}^s\,:\,\
\|\langle t \rangle ^{- \delta} S(t) (V_0- V_1)\|_{L^{p_1} (  \R_t ; L^{p_2}( \T^3))}> 
\eta^{1-\alpha}
\\
{\,\rm\,or\,}
\|\langle t \rangle ^{- \delta} S(t) (V_j )\|_{L^{p_1} (  \R_t ; L^{p_2}( \T^3))}> 
\beta \log\log(\eta^{-1}),\, j=0,1
\Big\vert
\\
 \|V_0- V_1\|_{\mathcal{H}^s(\T^3)}< \eta
 {\rm\,\,\,and\,\,\,} \|V_j\|_{\mathcal{H}^s(\T^3)}\leq A,\,j=0,1  \Big) 
\longrightarrow 0,
\end{multline*}
as $\eta\rightarrow 0$. Therefore, we can also suppose that 
\begin{equation}\label{yu1}
\|\langle t \rangle ^{- \delta} S(t) (V_0- V_1)\|_{L^{p_1} (  \R_t ; L^{p_2}( \T^3))}\leq
\eta^{1-\alpha}
\end{equation}
and
\begin{equation}\label{yu2}
\|\langle t \rangle ^{- \delta} S(t) (V_j )\|_{L^{p_1} (  \R_t ; L^{p_2}( \T^3))}\leq
\beta \log\log(\eta^{-1}),\, j=0,1,
\end{equation}
when estimate the needed conditional probability.

We therefore need to estimate the difference of two solutions under the assumptions \eqref{yu1} and \eqref{yu2}, in the regime $\eta\ll 1$.
Let 
$$
v_{j}(t)=S(t)(V_j)+w_{j}(t), \quad j=0,1
$$
be two solutions of the cubic wave equation with data $V_j$ (and thus $(w_{j}(0),\partial_{t}w_{j}(0))=(0,0)$).
Applying the energy estimate, performed several times in this paper, we get the bound
$$
\frac{d}{dt}E^{1/2}(w_{j}(t))\leq
C\Big(
\|S(t)(V_j)\|^3_{L^{6}(\T^3)}
+
\|S(t)(V_j)\|_{L^{\infty}(\T^3)}E^{1/2}(w_{j}(t))
\Big),\quad j=0,1,
$$
and therefore, under the assumptions \eqref{yu1} and \eqref{yu2},
\begin{equation}\label{kam}
E^{1/2}(w_{j}(t))\leq 
C_{T}\,e^{C_T\beta \log\log(\eta^{-1})}\log\log(\eta^{-1})\leq
C_{T}[\log(\eta^{-1})]^{C_{T}\beta},\quad t\in [0,T],
\end{equation}
where here and in the sequel we denote by $C_T$ different constants depending only on $T$ (but independent of $\eta$).
\par
We next estimate the difference $w_0-w_1$. Using the equations solved by $w_0$, $w_1$, we infer that
\begin{multline*}
\frac{d}{dt}\|w_0(t,\cdot)-w_1(t,\cdot)\|^2_{{\mathcal H}^1(\T^3)}
 \leq  2
\Big|
\int_{\T^3}\partial_{t}(w_0(t,x)-w_1(t,x))(\partial_t^2-\Delta)(w_0(t,x)-w_1(t,x))dx
\Big|
\\
 \leq 
C
\|w_0(t,\cdot)-w_1(t,\cdot)\|_{{\mathcal H}^1(\T^3)}
\|(w_0+S(t)(V_0))^3-w_0^3-(w_1+S(t)(V_1))^3+w_1^3\|_{L^2(\T^3)}\,.
\end{multline*}
Therefore using the Sobolev embedding $H^1(\T^3)\subset L^6(\T^3)$, we get
\begin{multline*}
\frac{d}{dt}\|w_0(t,\cdot)-w_1(t,\cdot)\|_{{\mathcal H}^1(\T^3)}
 \leq 
C\Big(\|w_0(t,\cdot)-w_1(t,\cdot)\|_{{\mathcal H}^1(\T^3)}+\|S(t)(V_0-V_1)\|_{L^6(\T^3)}\Big)
\\
\Big(\|w_0(t,\cdot)\|_{L^6(\T^3)}^2+\|w_1(t,\cdot)\|_{L^6(\T^3)}^2+\|S(t)(V_0)\|_{L^6(\T^3)}^2+\|S(t)(V_1)\|_{L^6(\T^3)}^2\Big).
\end{multline*}
Therefore, using \eqref{kam} and the Gronwall lemma, under the assumptions \eqref{yu1} and \eqref{yu2}, for $t\in [0,T]$,
$$
\|w_0(t,\cdot)-w_1(t,\cdot)\|_{{\mathcal H}^1(\T^3)}
\leq C_{T}\eta^{1-\alpha}[\log(\eta^{-1})]^{C_{T}\beta}\,
e^{C_{T}[\log(\eta^{-1})]^{C_{T}\beta}}\leq
C_{T}\eta^{1-\alpha-C_{T}\beta}\leq C_{T}\eta^{1/2},
$$
provided $\alpha,\beta \ll 1$. In particular by the Sobolev embedding
$$
\|w_0-w_1\|_{L^4([0,T]\times\T^3)}\leq C_{T}\eta^{1/2},
$$
and therefore under the assumption \eqref{yu1},
$$
\|v_0-v_1\|_{L^4([0,T]\times\T^3)}\leq C_{T}\eta^{1/2}\,.
$$
In summary, we obtained that for a fixed $\varepsilon>0$, the $\mu\otimes\mu$ measure of $V_0$, $V_1$ such that
$$
\|\Phi(t)(V_0)-\Phi(t)(V_1)\|_{X_{T}}>\varepsilon
$$
under the conditions \eqref{yu1}, \eqref{yu2}  and $\|V_0-V_1\|_{{\mathcal H}^s}<\eta$  is zero, as far as $\eta>0$ is sufficiently small.
Therefore, we obtain that the left hand side of \eqref{dimanche} tends to zero as $\eta\rightarrow 0$.
This ends the proof of the first part of Theorem~\ref{th_continuity}.
\par
For the second part of the proof of Theorem~\ref{th_continuity}, we argue by contradiction. 
Suppose thus that for every $\varepsilon>0$ there exist $\eta>0$ and $\Sigma$ of full $\mu$ measure such that 
$$
\forall\, V,V'\in\Sigma\cap B_{A},\, \|V-V'\|_{{\mathcal H}^s}<\eta
\implies\,\,
 \| \Phi(t) ( V) - \Phi(t) ( V') \|_{X_T} <\varepsilon.  
$$
Let us apply the previous affirmation with $\varepsilon=1/n$, $n=1,2,3\dots$ which produces full measure sets $\Sigma(n)$. 
Set
$$
\Sigma_1\equiv\bigcap_{n=1}^{\infty}\Sigma(n).
$$
Then $\Sigma_1$ is of full $\mu$ measure and we have that
$$
\forall\,\varepsilon>0,\,
\exists\,\eta>0,\,\,\,
\forall\, V,V'\in\Sigma_1\cap B_{A},\, \|V-V'\|_{{\mathcal H}^s}<\eta
\implies\,\,
 \| \Phi(t) (V) - \Phi(t) ( V') \|_{X_T} <\varepsilon.  
$$
In other words the (nonlinear) map $\Phi(t)$ from ${\mathcal H}^s$ to $X_T$, restricted to $\Sigma_1\cap B_{A}$, is uniformly continuous.  
Therefore it can be extended in a unique way to a uniformly continuous map on $\overline{\Sigma_1\cap B_{A}}$.
Since we supposed that the support of $\mu$ is the whole ${\mathcal H}^s$, we obtain that $\overline{\Sigma_1\cap B_{A}}=B_A$.
Let us denote by $\overline{\Phi(t)}$ the extension of $\Phi(t)$ to $B_A$.  We therefore have
\begin{equation}\label{rex}
\forall\,\varepsilon>0,\,
\exists\,\eta>0,\,\,\,
\forall\, V,V'\in
B_{A},\, \|V-V'\|_{{\mathcal H}^s}<\eta
\implies\,\,
 \| \overline{\Phi(t)} ( V) - \overline{\Phi(t)} ( V') \|_{X_T} <\varepsilon.  
\end{equation}
We have the following lemma.
\begin{lemma}\label{rex1}
For $V\in (C^{\infty}(\T^3)\times C^{\infty}(\T^3))\cap B_A$, we have that $\overline{\Phi(t)}(V)=(u,u_t)$, where $u$ is the unique classical solution on $[0,T]$ of
$$
(\partial_{t}^2-\Delta)u+u^3=0,\quad (u(0),\partial_{t}u(0))=V. 
$$
\end{lemma}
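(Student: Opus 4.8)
The plan is to realize the classical solution as the $X_T$–limit of probabilistic solutions issued from data in $\Sigma_1$, and then to identify this limit by a weak–strong uniqueness argument.

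First I would record that the classical solution exists and is smooth: since $V\in C^\infty(\T^3)\times C^\infty(\T^3)$ we have $V\in{\mathcal H}^\sigma$ for every $\sigma$, so Theorem~\ref{th1} provides a unique global solution $u$ with $(u,\partial_t u)\in C(\R;{\mathcal H}^1)$, in fact smooth; in particular $u\in L^\infty([0,T]\times\T^3)\cap L^4([0,T]\times\T^3)$. Moreover $V$ smooth makes $S(t)V$ smooth, so $V\in\Theta$ and the global solution furnished by Proposition~\ref{th.1.bis} coincides, by the uniqueness in the class $S(t)V+C(\R;{\mathcal H}^1)$ of Proposition~\ref{prop.local}, with $(u,\partial_t u)$; thus $\Phi(t)(V)=(u,\partial_t u)$. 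The content of the lemma is therefore to check that the continuous extension $\overline{\Phi(t)}$, built from the values of $\Phi(t)$ on $\Sigma_1\cap B_A$, takes this same value at the smooth point $V$, which a priori need not belong to $\Sigma_1$.

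Next I would choose an approximating sequence. Since the support of $\mu$ is all of ${\mathcal H}^s$ and $\Sigma_1\cap B_A$ is of full measure in $B_A$, it is dense in $B_A$, so I may pick $V_n\in\Sigma_1\cap B_A$ with $V_n\to V$ in ${\mathcal H}^s$. By \eqref{rex} the map $\overline{\Phi(t)}$ is uniformly continuous on the convex bounded set $B_A$; hence it sends $B_A$ into a bounded subset of $X_T$ (chaining along segments), and $\overline{\Phi(t)}(V)=\lim_n\Phi(t)(V_n)=:\bar v$ with convergence in $X_T$, so that $\|\bar v\|_{X_T}\le C(A,T)$ and $v_n\to\bar v$ in $C([0,T];{\mathcal H}^s)\cap L^4([0,T]\times\T^3)$. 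Each $v_n=\Phi(t)(V_n)$ is a genuine solution, hence satisfies the Duhamel formula \eqref{Duhamel} with $f=0$. Because $v_n\to\bar v$ in $L^4$ forces $v_n^3\to\bar v^3$ in $L^{4/3}$, and $V_n\to V$, I can pass to the limit in the distributional form of \eqref{Duhamel} and conclude that $\bar v\in C([0,T];{\mathcal H}^s)\cap L^4([0,T]\times\T^3)$ solves the cubic wave equation with data $V$.

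Finally I would identify $\bar v$ with $u$ by uniqueness. Set $d=\bar v-u$, so that $d(0)=\partial_t d(0)=0$ and $d$ solves the inhomogeneous wave equation with source $-(\bar v^3-u^3)=-d\,(\bar v^2+\bar v u+u^2)$. On a short interval $[0,\tau]$ I would apply the inhomogeneous Strichartz/energy estimate underlying the deterministic theory of Theorem~\ref{th1} (the estimates of Kapitanskii \cite{K}) to get $\|d\|_{L^4([0,\tau]\times\T^3)}\lesssim \|\bar v^3-u^3\|_{L^{4/3}([0,\tau]\times\T^3)}$, and then by Hölder $\|\bar v^3-u^3\|_{L^{4/3}}\le \|d\|_{L^4}\,\big(\|\bar v\|_{L^4([0,\tau]\times\T^3)}^2+\|u\|_{L^4([0,\tau]\times\T^3)}^2\big)$; since $\bar v,u\in L^4([0,T]\times\T^3)$, for $\tau$ small the factor in parentheses is $<\tfrac12$, which forces $d\equiv0$ on $[0,\tau]$. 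Partitioning $[0,T]$ into finitely many such intervals (the $L^4$ norms over $[0,T]$ being finite) propagates $d\equiv0$ to all of $[0,T]$, whence $\bar v=u$ and $\overline{\Phi(t)}(V)=(u,\partial_t u)$.

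The hard part is precisely this last step at the super-critical level $s<1/2$: in three dimensions the space–time norm $L^4_{t,x}$ sits at regularity $1/2$, so the required inhomogeneous estimate cannot be read off from the free ${\mathcal H}^s$ evolution and must exploit the cubic, defocusing structure together with the smoothness (hence boundedness) of $u$ to absorb the most singular interaction. In other words the identification $\bar v=u$ is genuinely a weak–strong uniqueness statement comparing the weak solution $\bar v\in C([0,T];{\mathcal H}^s)\cap L^4$ with the strong solution $u$, and it is again the $L^4$ component of $X_T$ that is the decisive quantity, exactly as in the first part of Theorem~\ref{th_continuity}.
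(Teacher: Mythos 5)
Your proposal is correct and follows essentially the same route as the paper: approximate $V$ by data $V_n\in\Sigma_1\cap B_A$, use the uniform continuity \eqref{rex} to get an $X_T$-limit $\bar v$, pass to the limit in the equation using the $L^4$ component of $X_T$ (so that cubes converge in $L^{4/3}$), and then identify $\bar v$ with the classical solution $u$ via the inhomogeneous $L^4$--$L^{4/3}$ estimate \eqref{strichartz}, H\"older, and a partition of $[0,T]$ into finitely many intervals on which the $L^4$ factor is less than $\tfrac12$. One correction to your closing commentary: the estimate $\|d\|_{L^4(I\times\T^3)}\lesssim\|F\|_{L^{4/3}(I\times\T^3)}$ is the standard dual-pair Strichartz inequality for the \emph{linear} wave equation with zero Cauchy data --- it involves no ${\mathcal H}^s$ regularity of the data, no defocusing structure, and no boundedness of $u$; the weak--strong uniqueness step needs only that both $\bar v$ and $u$ belong to $L^4([0,T]\times\T^3)$, which is exactly how you (and the paper) actually use it.
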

\begin{proof}
Let us first show that that first component of
$\overline{\Phi(t)}(V)\equiv(\overline{\Phi_1(t)}(V),\overline{\Phi_2(t)}(V))$ is a solution of the cubic wave equation.
Observe that by construction, necessarily $\overline{\Phi_2(t)}(V)=\partial_t \overline{\Phi_1(t)}(V)$ in the distributional sense (in ${\mathcal D}'((0,T)\times\T^3)$).

We have that 
$$
V=\lim_{n\rightarrow\infty}V_{n}\,,
$$
in ${\mathcal H}^s$ with  $V_n\in \Sigma_1\cap B_{A}$.
We also have that
\begin{equation}\label{limit}
(\partial_{t}^2-\Delta)(\Phi_1(t)(V_n))+(\Phi_1(t)(V_n))^3=0, 
\end{equation}
with the notation $\Phi(t)=(\Phi_1(t),\Phi_2(t))$. In addition, 
$$
\overline{\Phi(t)}(V)=\lim_{n\rightarrow\infty}\Phi(t)(V_{n})\,,
$$
in $X_T$.
We therefore have that
$$
(\partial_{t}^2-\Delta)(\overline{\Phi_1(t)}(V))=\lim_{n\rightarrow\infty}(\partial_{t}^2-\Delta)(\Phi_1(t)(V_n)),
$$
in the distributional sense. Moreover, coming back to the definition of $X_T$, we also obtain that
$$
(\overline{\Phi_1(t)}(V))^3=\lim_{n\rightarrow\infty}(\Phi_1(t)(V_n))^3,
$$
in $L^{4/3}([0,T]\times\T^3)$.
Therefore, passing into the limit $n\rightarrow\infty$ in (\eqref{limit}), we obtain that $\overline{\Phi_1(t)}(V)$ solves the cubic wave equation (with data $V$).
Moreover, since $(\overline{\Phi_1(t)}(V))^3\in L^{4/3}([0,T]\times\T^3)$, it also satisfies the Duhamel formulation of the equation.
\par
Let us denote by $u(t)$, $t\in [0,T]$ the classical solution of 
$$
(\partial_{t}^2-\Delta)u+u^3=0,\quad (u(0),\partial_{t}u(0))=V, 
$$
defined by Theorem~\ref{th1}.
Set $v\equiv \overline{\Phi_1(t)}(V)$. Since our previous analysis has shown that $v$ is a solution of the cubic wave equation, we have that 
\begin{equation}\label{razlika}
(\partial_{t}^2-\Delta)(u-v)+u^3-v^3=0,\quad (u(0),\partial_{t}u(0))=(0,0)\,.
\end{equation}
We now invoke the $L^4-L^{4/3}$ non homogenous estimates for the three dimensional wave equation.
Namely, we have that there exists a constant (depending on $T$)
such that for every interval $I\subset [0,T]$, the solutions of the wave equation
$$
(\partial_{t}^2-\Delta)w=F,\quad (u(0),\partial_{t}u(0))=(0,0)
$$
satisfies
\begin{equation}\label{strichartz}
\|u\|_{L^4(I\times\T^3)}\leq C\|F\|_{L^{4/3}(I\times\T^3)}\,.
\end{equation}
Applying \eqref{strichartz} in the context of \eqref{razlika} together with the H\"older inequality yields the bound
\begin{equation}\label{strichartz2}
\|u-v\|_{L^4(I\times\T^3)}\leq C
\big(\|u\|_{L^{4}(I\times\T^3)}^2+\|v\|_{L^4(I\times\T^3)}^2\big)
\|u-v\|_{L^{4}(I\times\T^3)}\,.
\end{equation}
Since $u,v\in L^{4}(I\times\T^3)$, we can find a partition of intervals $I_1,\dots,I_{l}$ of $[0,T]$ such that
$$
C\big(\|u\|_{L^{4}(I_j\times\T^3)}^2+\|v\|_{L^4(I_j\times\T^3)}^2\big)<\frac{1}{2},\quad j=1,\dots, l.
$$
We now apply \eqref{strichartz2} with $I=I_j$, $j=1,\dots, l$ to conclude that $u=v$ on $I_1$, then on $I_2$ and so on up to $I_l$ which gives that $u=v$ on 
$[0,T]$. 
Thus $u=\overline{\Phi_1(t)}(V)$ and therefore also $\partial_t u=\overline{\Phi_2(t)}(V)$.
This completes the proof of Lemma~\ref{rex1}.
\end{proof}
It remains now to apply Lemma~\ref{rex1} to the sequence of smooth data in the statement of Theorem~\ref{th1} to get a contradiction with \eqref{rex}.
More precisely, if $(U_n)$ is the sequence involved in the statement of Theorem~\ref{th1}, the result of Theorem~\ref{th1} affirms that 
$\overline{\Phi(t)}(U_n)$ tends to infinity in $L^{\infty}([0,T];{\mathcal H}^s)$ while \eqref{rex} affirms that the same sequence tends to zero in the same space 
$L^{\infty}([0,T];{\mathcal H}^s)$. 
%%%%%%%%%%%%%%%%%%%%%%%%%%%%%%%%%%%%%%%%%%%%%%%%%%% 
\appendix 
\section{Random series}
In this appendix, we collected the various results we need about random series. 
Most of them are well known in slightly different contexts, and the proofs we give are essentially adaptations of the classical proofs. 
The conditioned versions of our estimates (see Section~\ref{sec.random2}), though very natural do not seem to appear in the literature.
%%%%
\subsection{Basic large deviation estimates}
%%%%%%%%%%%%%%%%%%%%%%%%%%%%%%
\begin{proposition}\label{random1}
Let us fix $\mu\in {\mathcal M}^s$, $s\in [0,1)$ and let us
suppose that $\mu$ is induced via the map \eqref{eq.proba} from the couple $(u_0,u_1)\in {\mathcal H}^s$.
Then there exists a positive constant $C$ such that for every $2\leq p_1, p_2\leq q<+\infty$ and every $\delta >\frac  1 {p_1}$,  
\begin{multline}\label{tatu}
\mu\Big((v_0,v_1)\in {\mathcal H}^s\,:\,
\|\langle t \rangle ^{- \delta} (1-\Pi_{0})S(t) (v_0,v_1)\|_{L^{p_1} (  \R_t ; L^{p_2}( \T^3))}> \lambda 
\Big)
\\
\leq \Bigl(C\frac{ \sqrt{q}\|(u_0, u_1)\|_{\mathcal{H}^0(\T^3)}(\delta p_1 -1)^{-\frac 1 {p_1}}}{\lambda}\Bigr) ^{q}
\end{multline}
\end{proposition}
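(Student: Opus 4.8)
The plan is to prove a bound on every $L^q(\Omega)$ moment of
$$
Y\equiv\|\langle t\rangle^{-\delta}(1-\Pi_{0})S(t)(v_0,v_1)\|_{L^{p_1}(\R_t;L^{p_2}(\T^3))}
$$
for $q\geq\max(p_1,p_2)$, and then to convert it into the stated tail bound by Chebyshev's inequality, $\mu(Y>\lambda)\leq\lambda^{-q}\|Y\|_{L^q(\Omega)}^q$. The single probabilistic ingredient is the Khinchin-type consequence of the sub-gaussian hypothesis \eqref{subgauss}: for a finite family of independent variables distributed according to $\theta$ and real coefficients $(c_k)$,
$$
\Big\|\sum_k c_k X_k\Big\|_{L^q(\Omega)}\leq C\sqrt{q}\,\Big(\sum_k|c_k|^2\Big)^{1/2},\qquad q\geq 2,
$$
with $C$ depending only on the constant in \eqref{subgauss}; I would isolate this as a preliminary lemma, since it is where the genuinely probabilistic content lives. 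Below I abbreviate $L^q(\Omega)$, $L^{p_1}(\R_t)$ and $L^{p_2}(\T^3)$ by $L^q_\omega$, $L^{p_1}_t$, $L^{p_2}_x$.

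First I would use $q\geq p_1$ and $q\geq p_2$ together with Minkowski's integral inequality to pull the $L^q_\omega$ norm inside both spatial and temporal norms, reducing matters to a pointwise (in $t,x$) estimate:
$$
\big\|\langle t\rangle^{-\delta}(1-\Pi_{0})S(t)(v_0,v_1)\big\|_{L^q_\omega L^{p_1}_t L^{p_2}_x}\leq\Big\|\,\big\|\langle t\rangle^{-\delta}(1-\Pi_{0})S(t)(v_0,v_1)\big\|_{L^q_\omega}\Big\|_{L^{p_1}_t L^{p_2}_x}.
$$
For fixed $(t,x)$, the map $\omega\mapsto(1-\Pi_{0})S(t)(v_0^\omega,v_1^\omega)(x)$ is a linear combination of the independent variables $\{\beta_{n,j},\gamma_{n,j}\}_{n\in\Z^3_\star,\,j=0,1}$ whose coefficients are read off from \eqref{free} and \eqref{coord}. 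Applying the Khinchin inequality mode by mode yields the square-function bound
$$
\big\|(1-\Pi_{0})S(t)(v_0,v_1)(x)\big\|_{L^q_\omega}\leq C\sqrt{q}\,h(t,x)^{1/2},
$$
where
$$
h(t,x)=\sum_{n\in\Z^3_\star}\Big[\cos^2(t|n|)\big(b_{n,0}^2\cos^2(n\cdot x)+c_{n,0}^2\sin^2(n\cdot x)\big)+\frac{\sin^2(t|n|)}{|n|^2}\big(b_{n,1}^2\cos^2(n\cdot x)+c_{n,1}^2\sin^2(n\cdot x)\big)\Big].
$$

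It then remains to estimate the deterministic quantity $\|\langle t\rangle^{-\delta}h^{1/2}\|_{L^{p_1}_t L^{p_2}_x}$. Because $p_1,p_2\geq 2$, I would square, apply the triangle inequality in $L^{p_2/2}_x$ and then in $L^{p_1/2}_t$, and use the trivial bounds $\cos^2,\sin^2\leq 1$ together with $\|\cos^2(n\cdot x)\|_{L^{p_2/2}(\T^3)}\leq C$ (finite measure of $\T^3$, integrand $\leq 1$) to obtain
$$
\big\|\langle t\rangle^{-\delta}h^{1/2}\big\|_{L^{p_1}_t L^{p_2}_x}^2\leq C\,\big\|\langle t\rangle^{-2\delta}\big\|_{L^{p_1/2}(\R_t)}\sum_{n\in\Z^3_\star}\Big[(b_{n,0}^2+c_{n,0}^2)+\frac{b_{n,1}^2+c_{n,1}^2}{|n|^2}\Big].
$$
Here the time factor is finite exactly because $\delta>1/p_1$, and computing $\int_\R\langle t\rangle^{-\delta p_1}\,dt\approx(\delta p_1-1)^{-1}$ produces the constant $(\delta p_1-1)^{-2/p_1}$; the remaining sum is comparable to $\|u_0\|_{L^2}^2+\|u_1\|_{H^{-1}}^2\approx\|(u_0,u_1)\|_{\mathcal{H}^0(\T^3)}^2$, using $|n|\geq 1$ on $\Z^3_\star$. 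Combining the three displays gives
$$
\big\|\langle t\rangle^{-\delta}(1-\Pi_{0})S(t)(v_0,v_1)\big\|_{L^q_\omega L^{p_1}_t L^{p_2}_x}\leq C\sqrt{q}\,(\delta p_1-1)^{-1/p_1}\,\|(u_0,u_1)\|_{\mathcal{H}^0(\T^3)},
$$
and Chebyshev's inequality closes the argument.

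The main point is not any deep difficulty but the careful bookkeeping of constants: one must propagate the $\sqrt{q}$ from the Khinchin inequality and the $(\delta p_1-1)^{-1/p_1}$ from the time integral through the three uses of Minkowski, and check that none of the intermediate constants depend on $q,\delta,p_1,p_2$. The delicate item is precisely the uniform estimate $\|\cos^2(n\cdot x)\|_{L^{p_2/2}(\T^3)}\leq C$ with $C$ independent of $p_2$, which is what keeps the final constant free of $p_2$; everything else is routine once the Khinchin lemma is in place.
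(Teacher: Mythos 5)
Your proof is correct and follows essentially the same route as the paper's: reduce to an $L^q(\Omega)$ moment bound, use Minkowski (valid since $q\ge p_1,p_2$) to pull the $L^q(\Omega)$ norm inside the mixed space-time norm, apply the sub-gaussian Khinchin inequality (the paper cites \cite[Lemma~3.1]{BT1}) pointwise in $(t,x)$, integrate the resulting square function using $\delta p_1>1$ to produce the factor $(\delta p_1-1)^{-1/p_1}$, and finish with Chebyshev. The only cosmetic difference is that the paper first rewrites $\cos(n\cdot x)$, $\sin(n\cdot x)$ as combinations of $e^{\pm in\cdot x}$, making the square function $x$-independent, whereas you keep the real trigonometric functions and bound $\|\cos^2(n\cdot x)\|_{L^{p_2/2}(\T^3)}$ by the (finite) measure of the torus --- equivalent bookkeeping.
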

%%%%%%%%%%%%%%%%%%%%%%%%%%%%%%%%%%%%%%%%%%%%%%%%%%%%%%%%%%%%%%%%
\begin{proof}
By definition, the left hand-side of \eqref{tatu} equals
$$
p\Big(\omega\in\Omega\,:\,\|\langle t \rangle ^{- \delta} (1-\Pi_{0})S(t) (u_0^\omega,u_1^\omega)\|_{L^{p_1} (  \R_t ; L^{p_2}( \T^3))}> \lambda \Big).
$$
We decompose
\begin{multline*}
\Pi^0S(t) (u_0^\omega,u_1^{\omega})=\sum_{n\in\Z^3_{\star}}
\Big(\big(\beta_{n,0}(\omega)b_{n,0}\cos(t|n|)+\beta_{n,1}(\omega)b_{n,1}\frac{\sin(t|n|)}{|n|}\big)\cos(n\cdot x)
\\
+\big(\gamma_{n,0}(\omega)c_{n,0}\cos(t|n|)+\gamma_{n,1}(\omega)c_{n,1}\frac{\sin(t|n|)}{|n|}\big)\sin(n\cdot x)\Big),
\end{multline*}
with
$$
\sum_{n\in\Z^3_{\star}}
\Big(|b_{n,0}|^2+|c_{n,0}|^2+|n|^{-2}(|b_{n,1}|^2+|c_{n,1}|^2)\Big)
\leq C\|(u_0, u_1)\|_{\mathcal{H}^0(\T^3)}^2\,.
$$
Now, using the triangle inequality, by writing $\cos(n\cdot x)$ and $\sin(n\cdot x)$ as linear combination of $\exp(\pm i (n\cdot x))$, we observe that it suffices to get
the bound
$$ 
p ( \omega\,:\,\| \langle t \rangle^{- \delta }
\sum_{n} d_{n}(t) c_n  g_n^\omega e^{i n \cdot x}\|_{L^{p_1}( \R_t; L^{p_2}( \T^3))} > \lambda ) \leq 
\Bigl(\frac { C \sqrt {q}\|(c_n)\|_{l^2} (\delta p_1 -1)^{-\frac 1 {p_1}}} { \lambda} \Bigr)^{q},
$$
where $(g_n^\omega)$ are independent real random variables with joint distribution satisfying \eqref{subgauss} and $|d_n(t)|\leq 1$.
Using the Minkowski inequality, we can write for $q\geq p$, 
\begin{multline*}
\| \langle t \rangle^{- \delta }\sum_{n} 
d_{n}(t)c_n g_n^\omega e^{i n \cdot x}\|_{L^q( (\Omega; L^{p_1}( \R_t; L^{p_2}( \T^3)))}\\
\begin{aligned}&\leq &\| \langle t \rangle^{- \delta }\sum_{n} 
d_{n}(t)c_n g_n^\omega e^{i n \cdot x}\|_{L^{p_1}( \R_t; L^{p_2}( \T^3); L^q (\Omega))}\\
&=  &\| \|\langle t \rangle^{- \delta }\sum_{n} d_{n}(t)c_n g_n^\omega e^{i n \cdot x}\|_{L^q(\Omega)}\|_{L^{p_1}( \R_t; L^{p_2}( \T^3))}\,.
\end{aligned}
\end{multline*}
By using \cite[Lemma~3.1]{BT1}, we get
\begin{multline*}
\| \langle t \rangle^{- \delta }\sum_{n} d_n(t)c_n g_n^\omega e^{i n \cdot x}\|_{L^q( (\Omega; L^{p_1}( \R_t; L^{p_2}( \T^3)))}\\
\begin{aligned}&\leq \| C\sqrt{q} \Bigl(\sum_{n} \bigl|\langle t \rangle^{- \delta }d_n(t) c_n  e^{i n \cdot x}\bigr|^2 \Bigr)^{1/2}\|_{L^{p_1}( \R_t; L^{p_2}( \T^3))}\\
&\leq C\sqrt{q} \|\sum_{n} \langle t \rangle^{- 2\delta } |c_n|^2  \|^{1/2}_{L^{p_1/2}( \R_t; L^{p_2/2}( \T^3))}\\
&\leq C\sqrt{q} \Bigl(\sum_{n} \|\langle t \rangle^{- 2\delta }\|_{L^{p_1/2}( \R_t)}|\alpha _n|^2 \Bigr)^{1/2}\\
&\leq C\sqrt{q} (\delta p_1-1)^{-1/p_1}\Bigl(\sum_n|\alpha_n|^2\Bigr)^{1/2} 
\end{aligned}
\end{multline*}
and we conclude the proof of Proposition~\ref{random1} by using the Tchebichev inequality
$$ p (\omega\,:\, |A(\omega)|>\lambda) )\leq \lambda^{-q}\|A\|_{L^q(\Omega)}^q\,.$$
\end{proof}
For fixed $p_1, p_2$, we can optimize the estimate by taking 
\begin{equation}\label{eq.random3}
C\frac{ \sqrt{q}\|(u_0, u_1)\|_{\mathcal{H}^0(\T^3)}}{\lambda}
= \frac 1 2 \Leftrightarrow  q= \frac{ \lambda^2 \|(u_0, u_1)\|^{-2}_{\mathcal{H}^0(\T^3)} } {4C^2} ,
\end{equation}
and we deduce the following statement.
\begin{corollary}\label{random3}
There exist $C,c>0$ such that under the assumptions of  Proposition~\ref{random1} for every $\lambda>0$,
$$
\mu\Big((v_0,v_1)\in {\mathcal H}^s\,:\,
\|\langle t \rangle ^{- \delta}\Pi^0 S(t) (v_0,v_1)\|_{L^{p_1} (  \R_t ; L^{p_2}( \T^3))}> \lambda 
\Big)\leq C\exp\Bigl(- \frac {c\lambda^2} {\|(u_0, u_1)\|^2_{\mathcal{H}^0(\T^3)}}\Bigr)\,.
$$
\end{corollary}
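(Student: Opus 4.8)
The plan is to derive the subgaussian tail bound directly from the polynomial estimate \eqref{tatu} of Proposition~\ref{random1} by optimizing over the free exponent $q$, exactly along the lines already prepared in \eqref{eq.random3}. Since $p_1$, $p_2$ and $\delta$ are now fixed, the factor $(\delta p_1-1)^{-1/p_1}$ is a harmless constant that I absorb into $C$, so that \eqref{tatu} may be read, for every real $q\geq\max(p_1,p_2)$, as
$$
\mu\Big(\|\langle t\rangle^{-\delta}\Pi^0 S(t)(v_0,v_1)\|_{L^{p_1}(\R_t;L^{p_2}(\T^3))}>\lambda\Big)\leq\Big(\frac{C\sqrt{q}\,\|(u_0,u_1)\|_{\mathcal{H}^0(\T^3)}}{\lambda}\Big)^{q}.
$$
(Here I use that the Khinchin-type Lemma~3.1 of \cite{BT1} invoked in the proof of Proposition~\ref{random1} is valid for all real $q\geq 2$, so $q$ need not be an integer.)

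First I would make the choice $q=q_{*}\equiv\lambda^{2}\|(u_0,u_1)\|_{\mathcal{H}^0(\T^3)}^{-2}/(4C^2)$ dictated by \eqref{eq.random3}, which forces the quantity inside the parentheses to equal $1/2$. The right-hand side then collapses to $2^{-q_*}=\exp(-q_*\log 2)=\exp\bigl(-c\lambda^{2}\|(u_0,u_1)\|_{\mathcal{H}^0(\T^3)}^{-2}\bigr)$ with $c=\log 2/(4C^2)$, which is precisely the asserted bound (with prefactor $1$ in this regime). This is the entire content of the estimate; it is the standard Chernoff-type optimization of a moment bound.

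The single point requiring care — and the only place where the argument is not automatic — is that Proposition~\ref{random1} is available only for $q\geq\max(p_1,p_2)$, whereas $q_*$ falls below this threshold once $\lambda$ is small. I would therefore split into two regimes. When $\lambda^{2}\geq 4C^2\max(p_1,p_2)\,\|(u_0,u_1)\|_{\mathcal{H}^0(\T^3)}^{2}$ one has $q_*\geq\max(p_1,p_2)$ and the computation above applies verbatim. In the complementary regime $\lambda^{2}<4C^2\max(p_1,p_2)\,\|(u_0,u_1)\|_{\mathcal{H}^0(\T^3)}^{2}$ the target quantity $\exp\bigl(-c\lambda^{2}\|(u_0,u_1)\|_{\mathcal{H}^0(\T^3)}^{-2}\bigr)$ is bounded below by the fixed positive constant $2^{-\max(p_1,p_2)}$, so it suffices to enlarge the constant $C$ in the conclusion (taking $C\geq 2^{\max(p_1,p_2)}$) to make the right-hand side exceed $1$; the inequality then holds trivially because its left-hand side is a probability. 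Choosing $C$ and $c$ to accommodate both regimes completes the proof, and no genuine obstacle arises.
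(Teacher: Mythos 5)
Your proof is correct and follows essentially the same route as the paper: the paper's derivation of Corollary~\ref{random3} is precisely the Chernoff-type optimization of \eqref{tatu} with the choice of $q$ recorded in \eqref{eq.random3}, yielding the bound $2^{-q}=\exp\bigl(-c\lambda^{2}\|(u_0,u_1)\|_{\mathcal{H}^0(\T^3)}^{-2}\bigr)$. Your extra treatment of the small-$\lambda$ regime, where the optimal $q$ drops below $\max(p_1,p_2)$ and one enlarges $C$ so the claimed bound exceeds $1$, fills in a detail the paper leaves implicit and is exactly the standard fix.
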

\begin{remark}\label{rem3}
Notice that the measure $\mu$ is a tensor product of two probability measures $\mu_N$ and $\mu^N$ defined on the images of the projectors $\Pi_N$ and $\Pi^N$ respectively. As a consequence, applying Corollary~\ref{random3} to the measure $\mu^N$, we get that under the assumptions of  Proposition~\ref{random1} for every $\lambda>0$,
\begin{multline}
\mu\Big((v_0,v_1)\in {\mathcal H}^s\,:\,
\|\langle t \rangle ^{- \delta}\Pi^N S(t) (v_0,v_1)\|_{L^{p_1} (  \R_t ; L^{p_2}( \T^3))}> \lambda 
\Big)\\
= \mu^N\Big((v_0^N,v_1^N)\in \Pi^N({\mathcal H}^s)\,:\,
\|\langle t \rangle ^{- \delta} S(t) (v_0^N,v_1^N)\|_{L^{p_1} (  \R_t ; L^{p_2}( \T^3))}> \lambda 
\Big)\\
\leq C\exp\Bigl(- \frac {c\lambda^2} {\|\Pi^N(u_0, u_1)\|^2_{\mathcal{H}^0(\T^3)}}\Bigr)\leq  C\exp\Bigl(- \frac {c\lambda^2} {N^{-2s} \|\Pi^N(u_0, u_1)\|^2_{\mathcal{H}^s(\T^3)}}\Bigr)\,.
\end{multline}
\end{remark}
%%%%%%%
Notice now that if $u_0$ and $ u_1$ are constant, the free evolution is 
$$ S(t) (u_0, u_1)= u_0 + u_1t. $$
Therefore we deduce the following statement.
\begin{corollary} \label{random2}
Let us fix $\mu\in {\mathcal M}^s$, $s\in [0,1)$ and let us
suppose that $\mu$ is induced via the map \eqref{eq.proba} from the couple $(u_0,u_1)\in {\mathcal H}^s$.
Let us also fix $2\leq p_1, p_2<+\infty$ and $\delta >1+ \frac 1 {p_1}$.
Then there exists a positive constant $C$ such that for every $\lambda>0$,
$$
\mu\Big((v_0,v_1)\in {\mathcal H}^s\,:\,
\|\langle t \rangle ^{- \delta} S(t) (v_0,v_1)\|_{L^{p_1} (  \R_t ; L^{p_2}( \T^3))}> \lambda 
\Big)\leq C\exp\Bigl(- \frac {c\lambda^2} {\|(u_0, u_1)\|^2_{\mathcal{H}^0(\T^3)}}\Bigr)\,.
$$
\end{corollary}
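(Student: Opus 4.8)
The plan is to isolate the zero Fourier mode, which is the only place where the strengthened hypothesis $\delta>1+\frac{1}{p_1}$ (rather than $\delta>\frac{1}{p_1}$ as in Corollary~\ref{random3}) is actually used. Writing $S(t)(v_0,v_1)=\Pi_0 S(t)(v_0,v_1)+\Pi^0 S(t)(v_0,v_1)$ and combining the triangle inequality with the elementary union bound, it suffices to estimate each piece separately with $\lambda$ replaced by $\lambda/2$. For the nonzero-frequency part $\Pi^0 S(t)(v_0,v_1)$ I would simply invoke Corollary~\ref{random3}, which already delivers the desired gaussian tail since $\delta>1+\frac1{p_1}>\frac1{p_1}$.

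The whole content is therefore the zero mode. Recalling that the constant Fourier coefficients of $u_0^\omega$ and $u_1^\omega$ are $\alpha_0(\omega)a_0$ and $\alpha_1(\omega)a_1$, and using the stated convention $S(t)(c_0,c_1)=c_0+c_1 t$ for constants, we have
$$\Pi_0 S(t)(v_0^\omega,v_1^\omega)=\alpha_0(\omega)\,a_0+t\,\alpha_1(\omega)\,a_1.$$
Since this is independent of $x$, its $L^{p_2}(\T^3)$ norm is just $|\T^3|^{1/p_2}\,|\alpha_0(\omega)a_0+t\alpha_1(\omega)a_1|$, and I would bound the $L^{p_1}(\R_t)$ norm of $\langle t\rangle^{-\delta}$ times this quantity by
$$|\alpha_0(\omega)|\,|a_0|\,\|\langle t\rangle^{-\delta}\|_{L^{p_1}(\R)}+|\alpha_1(\omega)|\,|a_1|\,\|\langle t\rangle^{-\delta}t\|_{L^{p_1}(\R)}.$$
Here is exactly where the hypothesis enters: the first norm is finite because $\delta p_1>1$, while the second is finite because $\|\langle t\rangle^{-\delta}t\|_{L^{p_1}}\le\|\langle t\rangle^{1-\delta}\|_{L^{p_1}}<\infty$ precisely when $(\delta-1)p_1>1$, i.e. $\delta>1+\frac1{p_1}$. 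Thus the linear-in-$t$ growth of the zero mode under the free flow is what forces the stronger decay rate compared with Corollary~\ref{random3}.

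It remains to turn the pointwise (in $\omega$) bound $\|\langle t\rangle^{-\delta}\Pi_0 S(t)(v_0,v_1)\|_{L^{p_1}(\R_t;L^{p_2}(\T^3))}\le C(|\alpha_0(\omega)|+|\alpha_1(\omega)|)\,\|(u_0,u_1)\|_{\mathcal{H}^0}$ — where I use $|a_0|+|a_1|\lesssim\|(u_0,u_1)\|_{\mathcal{H}^0}$, since the zero modes are controlled by the $\mathcal{H}^0$ norm — into the claimed large-deviation estimate. Because $\alpha_0,\alpha_1$ satisfy \eqref{subgauss}, a direct optimization of $p(\alpha_j>\lambda)\le e^{-\gamma\lambda+c\gamma^2}$ over $\gamma$ (or, equivalently, the moment bound $\|\alpha_j\|_{L^q}\le C\sqrt q$ combined with Tchebichev, exactly as in the proof of Proposition~\ref{random1}) yields $p(|\alpha_j|>\lambda)\le 2e^{-c\lambda^2}$. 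Feeding this in gives $\mu(\|\langle t\rangle^{-\delta}\Pi_0 S(t)(v_0,v_1)\|_{L^{p_1}(\R_t;L^{p_2}(\T^3))}>\lambda/2)\le Ce^{-c\lambda^2/\|(u_0,u_1)\|_{\mathcal{H}^0}^2}$, and combining with the $\Pi^0$ estimate from Corollary~\ref{random3} completes the argument. The only genuinely new step beyond Corollary~\ref{random3} is the zero-mode analysis; since it is one-dimensional and completely explicit, I expect no real obstacle, the single subtlety being to track the exponent requirement $\delta>1+\frac1{p_1}$ ensuring integrability against the factor $t$.
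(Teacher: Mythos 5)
Your proof is correct and follows essentially the same route as the paper: the paper's entire argument for this corollary is the one-line remark preceding it, namely that on constants $S(t)(u_0,u_1)=u_0+u_1t$, so the zero mode grows linearly in $t$ and is what forces $\delta>1+\frac{1}{p_1}$, while the non-zero modes are already covered by Corollary~\ref{random3}. You have simply made explicit the details the paper leaves implicit (the splitting with a union bound, the integrability of $\langle t\rangle^{1-\delta}$ in $L^{p_1}(\R)$, the bound $|a_0|+|a_1|\lesssim\|(u_0,u_1)\|_{\mathcal{H}^0}$, and the subgaussian tail estimate for $\alpha_0,\alpha_1$ coming from \eqref{subgauss}).
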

Notice finally that using the Sobolev embeddings  $W^{\sigma, p}(\T^3) \subset L^\infty( \T^3), \sigma> \frac 3 p$, where the $W^{\sigma,p}$ norm is defined by 
$$ \|u \|_{W^{\sigma,p}( \T^3)}= \| (1- \Delta)^{\sigma/2} u \|_{L^p( \T^3)},$$
we obtain also
\begin{corollary} \label{corA.6}
Let us fix $s>0$ and $\mu\in {\mathcal M}^s$ Let $0<\sigma\leq s$ and let us
suppose that $\mu$ is induced via the map \eqref{eq.proba} from the couple $(u_0,u_1)\in {\mathcal H}^s$.
Let us also fix $2\leq p_1<+\infty$, $2\leq p_2 \leq + \infty$ and $\delta >1+ \frac 1 {p_1}$.
Then there exists a positive constant $C$ such that for every $\lambda>0$,
$$
\mu\Big((v_0,v_1)\in {\mathcal H}^s\,:\,
\|\langle t \rangle ^{- \delta} S(t) (v_0,v_1)\|_{L^{p_1} (  \R_t ; L^{p_2}( \T^3))}> \lambda 
\Big)\leq C\exp\Bigl(- \frac {c\lambda^2} {\|(u_0, u_1)\|^2_{\mathcal{H}^{\sigma}(\T^3)}}\Bigr)\,.
$$
\end{corollary}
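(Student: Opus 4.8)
The plan is to deduce the statement from Corollary~\ref{random2}, distinguishing the two ranges of the spatial exponent $p_2$. When $p_2<+\infty$ there is essentially nothing to do: since $\sigma>0$ we have the continuous embedding $H^\sigma\hookrightarrow H^0=L^2$, hence $\|(u_0,u_1)\|_{\mathcal{H}^0}\leq C\|(u_0,u_1)\|_{\mathcal{H}^\sigma}$, so the bound furnished by Corollary~\ref{random2} (which carries $\|(u_0,u_1)\|_{\mathcal{H}^0}$ in the denominator) is at least as strong as the one we want (carrying $\|(u_0,u_1)\|_{\mathcal{H}^\sigma}$), up to replacing $c$ by $c/C^2$. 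Thus the only genuinely new case is $p_2=+\infty$, and this is precisely where the Sobolev embedding recalled just before the statement enters.

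For $p_2=+\infty$ I would first fix a finite exponent $p$ with $p>\max(2,3/\sigma)$, which is possible since $\sigma>0$; then $W^{\sigma,p}(\T^3)\subset L^\infty(\T^3)$ holds, giving pointwise in $t$ the bound
$$
\|S(t)(v_0,v_1)\|_{L^\infty(\T^3)}\leq C\,\|(1-\Delta)^{\sigma/2}S(t)(v_0,v_1)\|_{L^p(\T^3)}.
$$
Taking the $L^{p_1}$ norm in $\langle t\rangle^{-\delta}\,dt$, the event in the statement with $p_2=+\infty$ is contained in the corresponding event for the $L^p$ spatial norm of $(1-\Delta)^{\sigma/2}S(t)(v_0,v_1)$, at the cost of replacing the threshold $\lambda$ by $\lambda/C$. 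The key algebraic observation is that the Fourier multiplier $(1-\Delta)^{\sigma/2}$ commutes with the free evolution, so that
$$
(1-\Delta)^{\sigma/2}S(t)(v_0,v_1)=S(t)\big((1-\Delta)^{\sigma/2}v_0,(1-\Delta)^{\sigma/2}v_1\big),
$$
and, just as importantly, it commutes with the randomization \eqref{coord}: multiplying the coefficient of $\cos(n\cdot x)$ or $\sin(n\cdot x)$ by $\langle n\rangle^\sigma$ leaves the independent multipliers $\beta_{n,j},\gamma_{n,j}$ untouched. Consequently the law of $S(t)\big((1-\Delta)^{\sigma/2}v_0,(1-\Delta)^{\sigma/2}v_1\big)$ under $\mu$ coincides with the law governed by the measure induced from the deterministic couple $\big((1-\Delta)^{\sigma/2}u_0,(1-\Delta)^{\sigma/2}u_1\big)\in\mathcal{H}^{s-\sigma}$ through the same random variables.

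It then remains to apply Corollary~\ref{random2} to this shifted couple; this is legitimate because both $p$ and $p_1$ are finite, $p\geq 2$, and the hypothesis $\delta>1+\tfrac{1}{p_1}$ is unchanged. The resulting exponential bound carries $\|\big((1-\Delta)^{\sigma/2}u_0,(1-\Delta)^{\sigma/2}u_1\big)\|_{\mathcal{H}^0}$ in the denominator, and this quantity is exactly $\|(u_0,u_1)\|_{\mathcal{H}^\sigma}$ since $\|(1-\Delta)^{\sigma/2}u_0\|_{L^2}=\|u_0\|_{H^\sigma}$ and $\|(1-\Delta)^{\sigma/2}u_1\|_{H^{-1}}=\|u_1\|_{H^{\sigma-1}}$; absorbing the factor $C$ from the embedding into the constants $C,c$ yields the claim. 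I do not expect a serious obstacle: the argument is a soft combination of a Sobolev embedding with the commutation of the smoothing operator and the randomization, and the only point demanding minimal care is the simultaneous choice of a finite $p$ with $p\geq 2$ and $p>3/\sigma$, which is guaranteed precisely by $\sigma>0$.
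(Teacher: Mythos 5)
Your proof is correct and follows the same route the paper intends: the paper disposes of this corollary in one sentence by invoking the Sobolev embedding $W^{\sigma,p}(\T^3)\subset L^\infty(\T^3)$ for $\sigma>3/p$ together with Corollary~\ref{random2}, which is exactly your argument once the details (commutation of $(1-\Delta)^{\sigma/2}$ with both $S(t)$ and the randomization, and the identity $\|((1-\Delta)^{\sigma/2}u_0,(1-\Delta)^{\sigma/2}u_1)\|_{\mathcal{H}^0}=\|(u_0,u_1)\|_{\mathcal{H}^\sigma}$) are written out. Your treatment of the case $p_2<\infty$ by monotonicity of the bound in the denominator is also the right (and trivial) observation.
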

%%%%%%%%%%%%%%%%%%%%
\begin{remark}\label{random5} 
The same argument as in the proof of  Proposition~\ref{random1} shows that for every $2\leq p<+\infty$ and $s\geq 0$
there exist $C,c>0$ such that under the assumptions of  Proposition~\ref{random1} defining $\mu$,
for every $\lambda>0$ and every integer $N\geq 0$,
\begin{align}
\label{eq.random1}
\mu\Big((v_0,v_1)\in {\mathcal H}^s\,:\,\|\Pi_{N}v_0\|_{L^p (  \T^3)}> \lambda \Big) 
\leq C\exp\Bigl(- \frac {c\lambda^2} {\|(u_0, u_1)\|^2_{\mathcal{H}^0(\T^3)}}\Bigr),
\\ 
\label{eq.random2}
\mu\Big((v_0,v_1)\in {\mathcal H}^s\,:\,\|(v_0, v_1)\|_{\mathcal{H}^s (  \T^3)}> \lambda \Big) \leq 
C\exp\Bigl(- \frac {c\lambda^2} {\|(u_0, u_1)\|^2_{\mathcal{H}^s(\T^3)}}\Bigr).
\end{align}
\end{remark}
%%%%%%%%%%%%%%%%%%%%%%%%%%
%%%%%%%%%%%%%%%%%%%%%%%%%%%%%%%%%%%%%%%%
%%%%%%%%%%%%%%%%%%%%%%%%%%%%%%%%%%%%%%%%
\subsection{Conditioned large deviation estimates}\label{sec.random2}
The purpose of this section is to deduce the following conditioned versions of our previous large deviation estimates:
\begin{proposition}\label{ochak}
Let $\mu\in {\mathcal M}^s$, $s\in(0,1)$ and suppose that the real random variable with distribution $\theta$, involved in the definition of $\mu$ is
symmetric. Then for $\delta> 1+ \frac{ 1} {p_1} $, $2\leq p_1<\infty$ and $2\leq p_2\leq\infty$ there exist positive constants
$c,C$ such that for every positive $\varepsilon,\lambda,\Lambda$ and $A$,
\begin{multline}\label{eq.cond}
\mu\otimes\mu\Big(
((v_0,v_1),(v'_0,v'_1))\in {\mathcal H}^s\times {\mathcal H}^s\,:\,\
\|\langle t \rangle ^{- \delta} S(t) (v_0- v'_0,v_1- v'_1)\|_{L^{p_1} (  \R_t ; L^{p_2}( \T^3))}> 
\lambda
\\
{\,\rm\,or\,}
\|\langle t \rangle ^{- \delta} S(t) (v_0+ v'_0,v_1+ v'_1)\|_{L^{p_1} (  \R_t ; L^{p_2}( \T^3))}> 
\Lambda
\Big\vert
 \|(v_0- v'_0,u_1- u'_1)\|_{\mathcal{H}^s(\T^3)}\leq \varepsilon
 \\
 {\rm\,and\,} \|(v_0+ v'_0,u_1+ u'_1)\|_{\mathcal{H}^s(\T^3)}\leq A  \Big) 
\leq 
C\Big(e^{-c\frac{\lambda^2}{\varepsilon^2}}+
e^{-c\frac{\Lambda^2}{A^2}}\Big).
\end{multline}
\end{proposition}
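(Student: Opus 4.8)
The plan is to realise $\mu\otimes\mu$ through two independent copies $\omega,\omega'$ of the randomization of the fixed datum $(u_0,u_1)$, and then to exploit the symmetry of $\theta$ by a random--sign (symmetrization) argument that decouples the \emph{magnitudes} of the Fourier coefficients from their \emph{signs}. Writing $(v_0,v_1)=(u_0^\omega,u_1^\omega)$ and $(v_0',v_1')=(u_0^{\omega'},u_1^{\omega'})$, I set $W_-=(v_0-v_0',v_1-v_1')$ and $W_+=(v_0+v_0',v_1+v_1')$; both are randomizations of $(u_0,u_1)$ whose coefficients are, respectively, the differences $\beta_{n,j}(\omega)-\beta_{n,j}(\omega')$ and the sums $\beta_{n,j}(\omega)+\beta_{n,j}(\omega')$ of the original i.i.d. variables (and analogously for the $\gamma_{n,j}$ and the zero modes $\alpha_j$). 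First I would split the bad event, by a union bound, into the event that the norm of $\langle t\rangle^{-\delta}S(t)W_-$ exceeds $\lambda$ and the event that the norm of $\langle t\rangle^{-\delta}S(t)W_+$ exceeds $\Lambda$, and estimate the two conditional probabilities separately.

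The key step is a coordinatewise decoupling. Fix a coordinate and set $(g,g')=(\beta_{n,j}(\omega),\beta_{n,j}(\omega'))$, a pair of i.i.d. variables with symmetric law. The measure--preserving swap $(g,g')\mapsto(g',g)$ gives $(g-g',\,g+g')\overset{d}{=}(-(g-g'),\,g+g')$, so the sign of $g-g'$ is independent of the pair $(|g-g'|,\,g+g')$; the map $(g,g')\mapsto(-g',-g)$, which preserves the law \emph{precisely because} $\theta$ is symmetric, gives $(g-g',\,g+g')\overset{d}{=}(g-g',\,-(g+g'))$, so the sign of $g+g'$ is independent of $(g-g',\,|g+g'|)$. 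Since distinct coordinates are independent, the whole difference vector equals in law the family $\big(r_{n,j}\,|\beta_{n,j}(\omega)-\beta_{n,j}(\omega')|\big)$, where $(r_{n,j})$ is an independent family of Rademacher signs, jointly independent of the \emph{envelope} consisting of all the absolute values together with all the sum coefficients; symmetrically for $W_+$.

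Then I would condition on this envelope. For the $W_-$ term, both constraints $\|W_-\|_{\mathcal H^s}\le\varepsilon$ and $\|W_+\|_{\mathcal H^s}\le A$ are measurable with respect to the envelope (norms do not see the signs of $W_-$), while, conditionally on the envelope, $W_-$ is exactly a Bernoulli randomization of the fixed datum $W_-^*$ whose Fourier coefficients are the absolute values $|\beta_{n,j}(\omega)-\beta_{n,j}(\omega')|\,b_{n,j}$, and so on. Since $\|W_-^*\|_{\mathcal H^s}=\|W_-\|_{\mathcal H^s}\le\varepsilon$ on the conditioning set and the Rademacher variables satisfy \eqref{subgauss} with a universal constant, Corollary~\ref{corA.6} (taking $\sigma=s$, which covers $p_2\le\infty$; or Corollary~\ref{random2} when $p_2<\infty$) applied conditionally yields
$$
\mu\otimes\mu\big(\|\langle t\rangle^{-\delta}S(t)W_-\|_{L^{p_1}(\R_t;L^{p_2}(\T^3))}>\lambda \,\big\vert\,\text{envelope}\big)\le C\,e^{-c\lambda^2/\varepsilon^2},
$$
uniformly over all envelopes in the conditioning set $B$. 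Because this bound is uniform in the envelope, integrating over the envelope inside $B$ gives $\mu\otimes\mu(\{\|\langle t\rangle^{-\delta}S(t)W_-\|>\lambda\}\cap B)\le C\,e^{-c\lambda^2/\varepsilon^2}\,\mu\otimes\mu(B)$, so the factor $\mu\otimes\mu(B)$ cancels on dividing and the same bound holds for the conditional probability. The $W_+$ term is identical, conditioning instead on the envelope generated by the $|g+g'|$ and the difference coefficients, and produces $C\,e^{-c\Lambda^2/A^2}$; a union bound then gives \eqref{eq.cond}.

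The main obstacle is precisely this decoupling: one must check that conditioning on the envelope renders both $\mathcal H^s$ constraints deterministic while leaving $W_-$ (resp. $W_+$) a clean Bernoulli randomization of prescribed $\mathcal H^s$ size, the symmetry of $\theta$ being essential for the sum (and only there, since the difference of i.i.d. variables is automatically symmetric). Once this is in place, the Gaussian tails are inherited verbatim from the unconditional large--deviation estimates of the previous subsection, now invoked with the universal Bernoulli subgaussian constant; the uniformity of the conditional estimate in the envelope is exactly what lets the conditioning event $B$ be absorbed at no cost.
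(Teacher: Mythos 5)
Your proposal is correct and follows essentially the same route as the paper's proof: where you extract the intrinsic Rademacher signs of the difference (resp.\ sum) coefficients and prove them independent of the envelope via the swap and negation maps, the paper equivalently multiplies both the difference and the sum by a fresh independent Bernoulli field (its Lemma~\ref{ll2}) and then invokes its Lemma~\ref{lem.cond} to pass to a supremum over the conditioned variables --- precisely your ``integrate over the envelope and divide'' step. In both arguments the conclusion rests on the same two points you identify: the conditioning events depend only on the moduli of the Fourier coefficients, and the Bernoulli case of Corollary~\ref{corA.6} then applies uniformly on the conditioning set, with a union bound combining the difference and sum terms.
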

\begin{proof}
The proof of this result can be obtained by coming back to the original proof of Paley and Zygmund's of the $L^p$ boundedness of random series on the torus. However, we will follow a suggestion by J.P. Kahane and show that in fact, we can deduce it directly from the large deviation estimates proved in the previous section. The basic result which will allow this procedure, is the following lemma.
%%%%%%%%%%%%%%%
\begin{lemma}\label{lem.cond}
For $j=1,2$, let $E_j$ be two Banach spaces endowed with measures $\mu_j$. Let $f: E_1\times E_2\rightarrow\C$ and $g_1,g_2:E_2\rightarrow \C$ be three measurable functions. Then
\begin{multline*}
\mu_1\otimes\mu_2
\Big(
(x_1,x_2)\in E_1\times E_2\,:\, |f(x_1,x_2)|> \lambda\Big\vert|\, g_1(x_2)|\leq \varepsilon,\,\, |g_2(x_2)|\leq A
\Big)
\leq
\\
\sup_{x_2\in E_2, |g_1(x_2)|\leq \varepsilon, |g_2(x_2)|\leq A }
\mu_1(x_1\in E_1\,:\, |f(x_1,x_2)|>\lambda)\,,
 \end{multline*}
 where by $\sup$ we mean the essential supremum.
 \end{lemma} 
 \begin{proof}
 We may write
 \begin{multline}\label{dir}
 \int_{E_1} \chi(|f(x_1,x_2)|> \lambda)\chi(|g_1(x_2)|\leq \varepsilon)\chi(|g_2(x_2)|\leq A)
  d\mu_1(x_1)
 \\
 \leq
 \Big(
 \sup_{\genfrac{}{}{0pt}{}{X_2\in E_2, |g_1(X_2)|\leq \varepsilon}{|g_2(X_2)|\leq A}}
\mu_1(x_1\in E_1\,:\, |f(x_1,X_2)|> \lambda)
 \Big)\chi(|g_1(x_2)|\leq \varepsilon)\chi(|g_2(x_2)|\leq A)
 \end{multline}
 for a.e. $x_2\in E_2$.
 Here by $\chi(\cdot)$ we denote the characteristic function of the corresponding set.
 Now, we integrate the inequality \eqref{dir} over $x_2\in E_2$ with respect to $\mu_2$, to achieve the claimed bound.
 This completes the proof of Lemma~\ref{lem.cond}.
 \end{proof}
 %%%%%%%%%%%%%%%%%%%%%%%%%%%%%%%%
%
 We shall also use the following lemma.
 \begin{lemma}\label{ll2}
 Let $g_1$ and $g_2$ be two independent identically distributed real random variables with symmetric distribution. Then  $g_1\pm g_2$ have  symmetric distributions.
 Moreover if $h$ is a Bernoulli random variable independent of $g_1$ then $hg_1$ has the same distribution as $g_1$.
 \end{lemma} 
 %%%
 The first part of the statement is straightforward if the distribution of $g_1$ and $g_2$ is absolutely continuous with respect to the Lebesgue measure
 (in the analysis of $g_1-g_2$ we do not need the symmetry assumption on $g_1$, $g_2$).
 In the general case one may invoke a duality and approximation argument.
 The second part of the lemma is straightforward.
 \par
 
 Let us now turn to the proof of Proposition~\ref{ochak}.
 Define
 $$
 {\mathcal E}\equiv \R\times \R^{\Z^3_{\star}}\times \R^{\Z^3_{\star}}\
 $$ 
 equipped with the natural Banach space structure coming from the $l^\infty$ norm. 
 We endow ${\mathcal E}$ with a probability measure $\mu_0$ defined via the map
 $$ 
 \omega\mapsto 
 \Big(
 k_{0}(\omega), \big(l_{n}(\omega)\big)_{n\in\Z^3_{\star}},
 \big(h_{n}(\omega)\big)_{n\in\Z^3_{\star}}
 \Big) ,
 $$
where $(k_0,l_{n}, h_n)$ is a system of independent Bernoulli variables. 
 
For $h=\big(x,(y_n)_{n\in\Z^3_{\star}},(z_n)_{n\in\Z^3_{\star}}\big)\in{\mathcal E}$ and 
$$
u(x)=a+\sum_{n\in\Z^3_{\star}}\Big(b_{n}\cos(n\cdot x)+c_{n}\sin(n\cdot x)\Big),
$$ 
we define the operation $\odot$ by
$$
h\odot u\equiv
ax+\sum_{n\in\Z^3_{\star}}\Big(b_{n} y_{n}\cos(n\cdot x)+c_{n}z_{n}\sin(n\cdot x)\Big).
$$
Let us first evaluate the quantity
\begin{multline}\label{ven_1}
\mu\otimes\mu\Big(
((v_0,v_1),(v'_0,v'_1))\in {\mathcal H}^s\times {\mathcal H}^s\,:\,\
\\
\|\langle t \rangle ^{- \delta} S(t) (v_0- v'_0,v_1- v'_1)\|_{L^{p_1} (  \R_t ; L^{p_2}( \T^3))}> 
\lambda
\Big\vert
\\
 \|(v_0- v'_0,v_1- v'_1)\|_{\mathcal{H}^s(\T^3)}\leq \varepsilon
 {\rm\,and\,} \|(v_0+ v'_0,v_1+ v'_1)\|_{\mathcal{H}^s(\T^3)}\leq A  \Big).
\end{multline}
Observe that, thanks to Lemma~\ref{ll2}, \eqref{ven_1} equals
\begin{multline}\label{ven_2}
\mu\otimes\mu\otimes\mu_0\otimes\mu_0\Big(
((v_0,v_1),(v'_0,v'_1), (h_0,h_1))\in {\mathcal H}^s\times {\mathcal H}^s\times {\mathcal E}\times {\mathcal E}\,:\,\
\\
\|\langle t \rangle ^{- \delta} S(t) (h_0\odot (v_0-v'_0),h_1\odot (v_1- v'_1))\|_{L^{p_1} (  \R_t ; L^{p_2}( \T^3))}> 
\lambda
\Big\vert
\\
 \|(h_0\odot(v_0- v'_0),h_1\odot(v_1- v'_1))\|_{\mathcal{H}^s(\T^3)}\leq \varepsilon
 {\rm\,and\,} \|(h_0\odot(v_0+ v'_0),h_1\odot(v_1+ v'_1))\|_{\mathcal{H}^s(\T^3)}\leq A  \Big). 
\end{multline}
Since the $H^s(\T^3)$ norm of a function $f$ depends only on the absolute value of its Fourier coefficients, we deduce that \eqref{ven_2} equals
\begin{multline}\label{ven_3}
\mu\otimes\mu\otimes\mu_0\otimes\mu_0\Big(
((v_0,v_1),(v'_0,v'_1), (h_0,h_1))\in {\mathcal H}^s\times {\mathcal H}^s\times {\mathcal E}\times {\mathcal E}\,:\,\
\\
\|\langle t \rangle ^{- \delta} S(t) (h_0\odot (v_0-v'_0),h_1\odot (v_1- v'_1))\|_{L^{p_1} (  \R_t ; L^{p_2}( \T^3))}> 
\lambda
\Big\vert
\\
 \|(v_0- v'_0,v_1- v'_1)\|_{\mathcal{H}^s(\T^3)}\leq \varepsilon
 {\rm\,and\,} \|(v_0+ v'_0,v_1+ v'_1)\|_{\mathcal{H}^s(\T^3)}\leq A  \Big) 
\end{multline}
We now apply Lemma~\ref{lem.cond}  with $\mu_1=\mu_0\otimes\mu_0$ and $\mu_2=\mu\otimes\mu$
to get that \eqref{ven_3} is bounded by
\begin{multline}\label{ven_4}
\sup_{ \|(v_0- v'_0,v_1- v'_1)\|_{\mathcal{H}^s(\T^3)}\leq \varepsilon}
\mu_0\otimes\mu_0\Big(
 (h_0,h_1)\in  {\mathcal E}\times {\mathcal E}\,:\,\
\\
\|\langle t \rangle ^{- \delta} S(t) (h_0\odot (v_0-v'_0),h_1\odot (v_1- v'_1))\|_{L^{p_1} (  \R_t ; L^{p_2}( \T^3))}> 
\lambda \Big) 
\end{multline}
We now apply Corollary~\ref{corA.6} (with Bernoulli variables) to obtain that \eqref{ven_1} is bounded by
$
C\exp(-c\frac{\lambda^2}{\varepsilon^2}).
$
A very similar argument gives that
\begin{multline*}
\mu\otimes\mu\Big(
((v_0,v_1),(v'_0,v'_1))\in {\mathcal H}^s\times {\mathcal H}^s\,:\,\
\\
\|\langle t \rangle ^{- \delta} S(t) (v_0+ v'_0,v_1+ v'_1)\|_{L^{p_1} (  \R_t ; L^{p_2}( \T^3))}> 
\Lambda
\Big\vert
\\
 \|(v_0- v'_0,v_1- v'_1)\|_{\mathcal{H}^s(\T^3)}\leq \varepsilon
 {\rm\,and\,} \|(v_0+ v'_0,v_1+ v'_1)\|_{\mathcal{H}^s(\T^3)}\leq A  \Big) 
\end{multline*}
is bounded by 
$
C\exp(-c\frac{\Lambda^2}{A^2}).
$
This completes the proof of Proposition~\ref{ochak}.
 \end{proof}
%%%%%%%%%%%%%%%%%%%%%%%%%%%%%%%%%%%%%%%%%%
\section{Properties of the measures $\mu_{(u_0,u_1)}$}
Via the choice of coordinates induced by the decomposition~\eqref{coord}
 $$(u_0, u_1) \in \mathcal{H}^s\mapsto (a_0,(b_{n,0}, c_{n,0})_{n\in \mathbb{Z}^3_*}, a_1, (b_{n,1}, c_{n,1})_{n\in \mathbb{Z}^3_*})\in \bigl(\mathbb{R}\times \mathbb{R}^{\mathbb{Z}^3_*}\times \mathbb{R}^{\mathbb{Z}^3_*}\bigr)^2$$
the measure $\mu_{(u_0,u_1)}$ can be seen as an infinite tensor product of probability measures on
 $$ 
 \bigl(\mathbb{R}\times \mathbb{R}^{\mathbb{Z}^3_*}\times \mathbb{R}^{\mathbb{Z}^3_*}\bigr)^2\,,
 $$
 $$ 
 \mu \sim \mu_{0,0}\otimes_{n\in \mathbb{Z}^3_*} \mu_{n,0,b} \otimes_{n\in
 \mathbb{Z}^3_* } \mu_{n,0,c} \otimes \mu_{0,1}\otimes_{n\in \mathbb{Z}^3_*} \mu_{n,1,b} \otimes_{n\in \mathbb{Z}^3_*} \mu_{n,1,c} 
 $$ 
 where 
 $$
 \mu_{0,0},\mu_{n,0,b}, \mu_{n,0,c}, \mu_{0,1}, \mu_{n,1,b}, \mu_{n,1,c}
 $$
 are the distributions of the random variables
 $$
 a_0\alpha_0,  b_{n,0}\beta_{n,0}, c_{n,0}\gamma_{n,0}, a_1\alpha_1, b_{n,1}\beta_{n,1}, c_{n,1}\gamma_{n,1}
 $$
 respectively.  As a consequence, we will be able to apply the following result by Kakutani~\cite{Ka}.
 %%%%%%%
 \begin{theorem}\label{kakutani}
 Consider the infinite tensor products of probability measures on $\mathbb{R}^{\mathbb{N}}$
 $$ \mu_i= \bigotimes _{n\in \mathbb{N}} \mu_{n,i}, \qquad i=1,2.
 $$
 Then the measures $\mu_1$ and $\mu_2$ on $\mathbb{R}^{\mathbb{N}}$ endowed with its cylindrical Borel $\sigma$-algebra are absolutely continuous with respect each other, $\mu_1 \ll \mu_2$, and $\mu_2 \ll \mu_1$, if and only if the following holds:
 \begin{enumerate}
 \item The measures $\mu_{n,1}$ and $\mu_{n,2}$ are for each $n$ absolutely continuous with respect to each other: there exists two  functions $g_n \in L^1( \mathbb{R}, d\mu_{n,2})$, $k_n \in L^1( \mathbb{R}, d\mu_{n,1}) $ such that 
 $$ d\mu_{n,1} = g_n d\mu_{n,2}, \qquad d\mu_{n,2} = k_n d\mu_{n,1}
 $$
 \item The functions $g_n$ are such that the infinite product 
 \begin{equation}
 \label{prodinf}
  \prod_{n\in \mathbb{N}} \int_{\mathbb{R} }g_n^{1/2} d\mu_{n,2} =  \prod_{n\in \mathbb{N}} \int_{\mathbb{R} } \sqrt{ d\mu_{n,1} }\sqrt{ d\mu_{n,2}}
  \end{equation}
 is convergent (i.e. positive).
 \end{enumerate}
 Furthermore, if any of the condition above is not satisfied (i.e. if the two measures $\mu_1$ and $\mu_2$ are not absolutly continuous with respect to each other), then the two measures are mutually singular: there exists a set $A\subset \mathbb{R}^{\mathbb{N}}$ such that 
 $$ \mu_1 (A) =1, \qquad \mu_2 (A) =0$$
 \end{theorem}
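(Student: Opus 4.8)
The plan is to reduce the whole dichotomy to the behaviour of the \emph{Hellinger affinities}
\[
\rho_n=\int_{\mathbb{R}}\sqrt{d\mu_{n,1}}\,\sqrt{d\mu_{n,2}}\in[0,1],
\]
whose infinite product is exactly the quantity in \eqref{prodinf}. Here $\rho_n=1$ forces $\mu_{n,1}=\mu_{n,2}$ and $\rho_n=0$ characterizes singularity of the pair $(\mu_{n,1},\mu_{n,2})$ (both by the equality case in Cauchy--Schwarz). I would first treat the case where hypothesis (1) holds, so $d\mu_{n,1}=g_n\,d\mu_{n,2}$ with $g_n>0$ a.e.\ and $\rho_n=\int\sqrt{g_n}\,d\mu_{n,2}\in(0,1]$. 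Work on $\mathbb{R}^{\mathbb{N}}$ equipped with $\mu_2$, let $\mathcal{F}_N$ be the $\sigma$-algebra of the first $N$ coordinates, and set $X_N=\prod_{n\le N}g_n$, which is precisely the Radon--Nikodym derivative of $\bigotimes_{n\le N}\mu_{n,1}$ with respect to $\bigotimes_{n\le N}\mu_{n,2}$. Then $X_N$ is a nonnegative $\mu_2$-martingale with $\int X_N\,d\mu_2=1$.

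The computational heart of the argument is the identity obtained by expanding the square and using independence of coordinates,
\[
\big\|\sqrt{X_N}-\sqrt{X_M}\big\|_{L^2(\mu_2)}^2
=2\Big(1-\prod_{M<n\le N}\rho_n\Big),\qquad M<N.
\]
In the convergent case $\prod_n\rho_n>0$ (equivalently $\sum_n(1-\rho_n)<\infty$), the tail products tend to $1$, so $\{\sqrt{X_N}\}$ is Cauchy in $L^2(\mu_2)$; its limit $Y$ satisfies $X_N\to Y^2=:X_\infty$ in $L^1(\mu_2)$ with $\int X_\infty\,d\mu_2=1$. Identifying $X_\infty$ as the density of $\mu_1$ against $\mu_2$ on the cylindrical $\sigma$-algebra (the $X_N$ being conditional expectations of $X_\infty$) gives $\mu_1\ll\mu_2$, and running the same estimate with $1/g_n$ in place of $g_n$ yields $\mu_2\ll\mu_1$. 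This proves the equivalence half of Theorem~\ref{kakutani}.

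For the divergent case $\prod_n\rho_n=0$, I would observe that $\sqrt{X_N}$ is a nonnegative $\mu_2$-supermartingale, since $\mathbb{E}_{\mu_2}[\sqrt{X_N}\mid\mathcal{F}_M]=\sqrt{X_M}\prod_{M<n\le N}\rho_n\le\sqrt{X_M}$, and $\mathbb{E}_{\mu_2}\sqrt{X_N}=\prod_{n\le N}\rho_n\to0$. Martingale convergence together with Fatou's lemma then forces $X_N\to0$ $\mu_2$-almost surely. The key symmetry is that swapping the roles of the two measures produces the \emph{same} affinities, because $\int\sqrt{1/g_n}\,d\mu_{n,1}=\int\sqrt{g_n}\,d\mu_{n,2}=\rho_n$; hence the analogous supermartingale $1/X_N$ tends to $0$ $\mu_1$-almost surely, i.e. $X_N\to+\infty$ $\mu_1$-almost surely. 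The set
\[
A=\big\{x\in\mathbb{R}^{\mathbb{N}}\,:\,\lim_{N\to\infty}X_N(x)=+\infty\big\}
\]
then satisfies $\mu_1(A)=1$ and $\mu_2(A)=0$, which is exactly the mutual singularity asserted in the ``furthermore'' part.

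It remains to deal with the situation where hypothesis (1) fails. If some pair $(\mu_{n_0,1},\mu_{n_0,2})$ is itself mutually singular, a separating set for that coordinate lifts to a cylinder separating $\mu_1$ and $\mu_2$, giving singularity at once; in the remaining intermediate configurations I would pass to a common dominating measure $\nu_n=\tfrac12(\mu_{n,1}+\mu_{n,2})$, express each $\mu_{n,i}$ through its density against $\nu_n$, and re-run the martingale argument on the reference product $\bigotimes_n\nu_n$, where $\rho_n$ is still well defined via the Lebesgue decomposition. I expect the main obstacle to be not the $L^2$ estimate itself but the two measure-theoretic identifications: justifying that the $L^1(\mu_2)$-limit $X_\infty$ is genuinely $d\mu_1/d\mu_2$ on the full (non-finite-dimensional) cylindrical $\sigma$-algebra, and checking measurability of the separating set $A$ and the role-swapping step in the singular case; the bookkeeping required to merge the case (1)-fails analysis cleanly into the affinity framework is the other delicate point.
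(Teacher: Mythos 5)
The paper itself offers no proof of this statement: it is quoted as a classical result of Kakutani~\cite{Ka}, and only its consequence for Gaussian randomizations is proved, so your proposal has to be judged on its own merits. Its core is correct and is in fact the classical argument: the Hellinger affinities $\rho_n$, the $\mu_2$-martingale $X_N=\prod_{n\le N}g_n$, the identity $\|\sqrt{X_N}-\sqrt{X_M}\|_{L^2(\mu_2)}^2=2\bigl(1-\prod_{M<n\le N}\rho_n\bigr)$, the $L^2$-Cauchy argument when $\prod_n\rho_n>0$, and the supermartingale/Fatou argument together with the role-swap $1/X_N$ when $\prod_n\rho_n=0$. Combined with the observation that a mutually singular coordinate pair lifts to a separating cylinder set, this correctly proves: under hypothesis (1), convergence of \eqref{prodinf} is equivalent to mutual absolute continuity and divergence forces mutual singularity; and if some coordinate pair is singular, so are the products. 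The two identifications you flag as delicate are indeed routine (a $\pi$-system/monotone class argument identifies $X_\infty$ as $d\mu_1/d\mu_2$ on the cylindrical $\sigma$-algebra; $A=\bigcap_{k}\bigcup_{M}\bigcap_{N\ge M}\{X_N>k\}$ settles measurability).

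The genuine gap is the case you defer to the last paragraph: hypothesis (1) fails at some coordinate but no coordinate pair is mutually singular. No re-running of the martingale argument over $\nu_n=\frac12(\mu_{n,1}+\mu_{n,2})$ can close it, because the ``furthermore'' clause is \emph{false} in that case. Counterexample: take $\mu_{1,1}=\mathrm{Leb}|_{[0,1]}$, $\mu_{1,2}=\frac12\,\mathrm{Leb}|_{[0,1]}+\frac12\delta_{2}$, and $\mu_{n,1}=\mu_{n,2}$ for all $n\ge2$. Condition (1) fails (the atom makes $\mu_{1,2}\not\ll\mu_{1,1}$), so $\mu_1$ and $\mu_2$ are not mutually absolutely continuous; yet $\mu_2\ge\frac12\mu_1$ as measures on $\R^{\N}$, hence $\mu_1\ll\mu_2$ and no set $A$ can satisfy $\mu_1(A)=1$ and $\mu_2(A)=0$. (Consistently, the affinity product here is $\rho_1=\tfrac{\sqrt2}{2}>0$.) Kakutani's actual theorem takes (1) as a standing hypothesis, and the transcription in the paper overstates it; your instinct that this configuration needed separate treatment was detecting a defect of the statement, not of your method. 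In the paper's application this is harmless: for Gaussian randomizations each coordinate pair is either equivalent or mutually singular (coefficients vanishing simultaneously or not), so the problematic intermediate case never arises. A correct fix of your write-up is to prove the dichotomy exactly as you did under hypothesis (1), handle the singular-pair case by the cylinder lift, and explicitly exclude (or restate the theorem without) the remaining case.
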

 %%%%%%%%%%%%%%%
 Theorem~\ref{kakutani} implies the following statement concerning the measures we studied in this paper in the context of the cubic wave equation \eqref{NLW}.
 \begin{proposition}
  Assume that the random variables  $(\alpha_{j}(\omega),\beta_{n,j}(\omega),\gamma_{n,j}(\omega))$, $j=0,1$,  $n\in \mathbb{Z}^3_*$,  used to obtain the randomisation as explained in the introduction are independent centered gaussian random variables.
Let 
 $$
 u_{j}(x)= a_{0,j}+ \sum_{n\in \mathbb{Z}^3_*} \Big(b_{n,j} \cos (n\cdot x) + c_{n,0} \sin(n\cdot x)\Big), \qquad j=0,1,
 $$
 $$
 \widetilde{u}_{j}(x)= \widetilde{a}_{0,j}+ \sum_{n\in \mathbb{Z}^3_*} \Big(\widetilde{b}_{n,j} \cos (n\cdot x) + \widetilde{c}_{n,0} \sin(n\cdot x)\Big), \qquad j=0,1.
 $$
 Then the measures $\mu_{(u_0, u_1)}$ and $\mu_{(\widetilde{u}_0, \widetilde{u}_1)}$ are absolutely continuous with respect to each other if 
 and only if neither of the coefficients ($a,b,c, \widetilde{a}, \widetilde{b}, \widetilde{c}$) 
 above vanishes (or then they must vanish simultaneously, i.e. if $a_{0,j}=0$, then $\widetilde{a} _{0,j}=0$, etc...and 
 $$
 \sum_{j=0}^1\Bigl(\Bigl| \frac{ \widetilde{a}_{0,j}}{ a_{0,j}}\Bigr| -1 \Bigr)^2+ 
 \sum_{n\in \mathbb{Z}^3_*} \Bigl( \Bigr|\frac{ \widetilde{b}_{n,j}}{ b_{n,j}}\bigr|-1 \Bigr) ^2 +
  \Bigl( \Bigl|\frac{ \widetilde{c}_{n,j}}{ c_{n,j}} \Bigr|-1 \Bigr)^2 < + \infty.
 $$
 Furthermore, of this condition is not satisfied, then the two measures $\mu_{(u_0, u_1)}$ and $\mu_{(\widetilde{u}_0, \widetilde{u}_1)}$ are mutually singular.
 \end{proposition}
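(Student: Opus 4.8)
The plan is to apply Kakutani's dichotomy (Theorem~\ref{kakutani}) to the tensor factorizations of $\mu_{(u_0,u_1)}$ and $\mu_{(\widetilde u_0,\widetilde u_1)}$ recalled just above. The starting observation is that, since the randomization uses \emph{centered Gaussian} variables, every one-dimensional factor is itself a centered Gaussian: for instance the factor of $\mu_{(u_0,u_1)}$ carried by the coefficient $b_{n,j}$ is the law of $b_{n,j}\beta_{n,j}$, with variance $v_{n,j}\,b_{n,j}^2$, while the corresponding factor of $\mu_{(\widetilde u_0,\widetilde u_1)}$ is the law of $\widetilde b_{n,j}\beta_{n,j}$, with variance $v_{n,j}\,\widetilde b_{n,j}^2$; here $v_{n,j}$ is the variance of $\beta_{n,j}$, which is \emph{the same} in both measures because the same random variable is used (and similarly for the $a$ and $c$ coefficients). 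Thus everything reduces to comparing, coordinate by coordinate, two centered Gaussians on $\R$ whose variances differ only through the square of the corresponding Fourier coefficient.

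First I would dispose of the vanishing coefficients. If one of two paired coefficients vanishes and the other does not---say $b_{n,j}=0$ but $\widetilde b_{n,j}\neq 0$---then the two factors are a Dirac mass at $0$ and a nondegenerate Gaussian, which are mutually singular; hence condition (1) of Theorem~\ref{kakutani} fails and the theorem forces $\mu_{(u_0,u_1)}$ and $\mu_{(\widetilde u_0,\widetilde u_1)}$ to be mutually singular. This explains the ``vanish simultaneously'' requirement. When both members of a pair vanish the two factors coincide ($\delta_0=\delta_0$) and contribute trivially, so from now on I assume every paired coefficient is nonzero. In that regime two centered Gaussians on $\R$ with positive variances are mutually absolutely continuous, with explicit Radon--Nikodym density, so condition (1) of Kakutani holds for every factor.

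It then remains to test condition (2), i.e.\ the convergence (positivity) of the infinite product \eqref{prodinf} of Hellinger affinities. The key computation is the elementary Gaussian integral
\begin{equation*}
\int_{\R}\sqrt{d\mathcal N(0,\sigma_1^2)\,d\mathcal N(0,\sigma_2^2)}
=\sqrt{\frac{2\sigma_1\sigma_2}{\sigma_1^2+\sigma_2^2}},
\end{equation*}
which, applied with $\sigma_1^2=v_{n,j}\,b_{n,j}^2$ and $\sigma_2^2=v_{n,j}\,\widetilde b_{n,j}^2$, makes the common variance $v_{n,j}$ cancel and yields for the $b_{n,j}$--factor the value $\sqrt{2r_{n,j}/(1+r_{n,j}^2)}$ with $r_{n,j}=|\widetilde b_{n,j}/b_{n,j}|$ (and analogous expressions for the $a$ and $c$ factors).

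Finally I would invoke the standard criterion that an infinite product $\prod_n \theta_n$ of numbers $\theta_n\in(0,1]$ converges to a positive limit if and only if $\sum_n(1-\theta_n)<\infty$. A short Taylor expansion gives $1-\sqrt{2r/(1+r^2)}=\tfrac14(r-1)^2+O((r-1)^3)$ as $r\to1$, while $1-\sqrt{2r/(1+r^2)}$ stays bounded away from $0$ whenever $r$ stays away from $1$; hence $\sum_n(1-\theta_n)<\infty$ is equivalent to $r_{n,j}\to1$ together with $\sum(r_{n,j}-1)^2<\infty$, which is precisely the stated series condition involving the terms $(|\widetilde a_{0,j}/a_{0,j}|-1)^2$, $(|\widetilde b_{n,j}/b_{n,j}|-1)^2$ and $(|\widetilde c_{n,j}/c_{n,j}|-1)^2$. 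Combining with the discussion of the vanishing coefficients, Theorem~\ref{kakutani} gives mutual absolute continuity exactly when the coefficients vanish simultaneously and the series converges, and mutual singularity in every other case. The only genuinely delicate point is the Hellinger-integral computation together with the matching of its product asymptotics against the $\ell^2$ series condition; the remainder is bookkeeping inside Kakutani's theorem.
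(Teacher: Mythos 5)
Your proof is correct and follows essentially the same route as the paper's: both reduce to Kakutani's theorem, compute the Gaussian Hellinger affinity $\bigl(2\sigma_1\sigma_2/(\sigma_1^2+\sigma_2^2)\bigr)^{1/2}$, and match its Taylor expansion $1-\tfrac14(r-1)^2+\mathcal{O}((r-1)^3)$ against the stated $\ell^2$ condition via the standard convergence criterion for infinite products. Your explicit treatment of vanishing coefficients (Dirac mass versus nondegenerate Gaussian) and of possibly non-unit variances $v_{n,j}$ is slightly more careful bookkeeping than the paper's, which works with normalized Gaussians and dismisses the vanishing case as trivial, but the argument is the same.
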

 \begin{proof}Indeed, if $g$ is a normalized gaussian random variable, the random variable $\alpha g$ is a Gaussian random variable centered and variance $\alpha^2$, 
 and eliminating the trivial contributions when the coefficients vanish simultaneously, the result amounts to proving that if $\mu_i= \otimes_{n\in \N}\mu_{n,i}$, with $\mu_{n,i}$ Gaussian distribution of variance $x_{n,i}^2$,  then the measures $\mu_1$ and $\mu_2$ are absolutely continuous with respect to each other if and only if 
 $$ \sum_{n} \Bigl| \frac {x_{n,1}} { x_{n,2}} -1 \Bigr| ^2 <+ \infty$$
 in this case, we have 
 $$ d\mu_{n,i}= \frac{ 1}{ x_{n,i} \sqrt{2\pi}} e^{- \frac{ t^2}{ 2x_{n,i}^2}} dt$$
 and 
 $$ g_n= \frac{ x_{n,2}} { x_{n,1}} e^{\frac{ t^2} { 2x_{n,2}^2}-\frac{ t^2}{ 2x_{n,1}^2}}\,.
 $$
 Consequently, 
 \begin{equation}
 \int_{\R} g_n^{1/2} d\mu_{n,2}= 
 \int_{\R} \frac{ 1} { \sqrt{2\pi x_{n,1}x_{n,2}}} e^{-\frac{ t^2} { x_{n,2}^2}-\frac{ t^2}{ x_{n,1}^2}} dt\\
 =\Bigl(\frac{ 2 x_{n,1} x_{n,2} } { x_{n,1}^2+ x_{n,2}^2} \Bigr) ^{\frac 1 2}=\Bigl(\frac{ \frac {x_{n,1}} {x_{n,2}} + \frac {x_{n,2}} {x_{n,1}}} { 2} \Bigr)^{- \frac 1 2}  
   \end{equation}
  and we deduce that if the infinite product~\eqref{prodinf} is convergent then necessarily the quotients $\frac {x_{n,1}} {x_{n,2}} $ tend to $1$.  Now, writing $\frac {x_{n,2}} { x_{n,1}} =1+ \varepsilon_n$, we have 
  $$  \Bigl(\frac{ 2 x_{n,1} x_{n,2} } { x_{n,1}^2+ x_{n,2}^2} \Bigr) ^{\frac 1 2}=  1- \frac 1 4  \varepsilon_n^2 + \mathcal{O} ( \varepsilon_n ^3)\,.
  $$ 
  Finally, by taking the logarithm, we conclude that the infinite product~\eqref{prodinf} is convergent if and only if 
  $$ 
  \sum_n \varepsilon_n ^2 <+ \infty.
  $$ 
 \end{proof}

%%%%%%%%%%%%%%%%%%%%%%%%%%%%%%%%%%%%%%%%%

\end{document}